\definecolor{purple}{rgb}{.9,0,.9}
\definecolor{green}{rgb}{0,.7,0}
\newtheoremstyle{pstyle}
  {}%
  {}%
  {}
  {}%
  {\itshape}%
  {.}%
  { }%
  {}%
\theoremstyle{pstyle}
\newtheorem{proofpart}{Part}
\let\orgdescriptionlabel\descriptionlabel
\renewcommand*{\descriptionlabel}[1]{%
 \let\orglabel\label
 \let\label\@gobble
 \phantomsection
 \edef\@currentlabel{#1}%
 \let\label\orglabel
 \orgdescriptionlabel{#1}%
}
\newcommand{\Nat}{\mathbb{N}}
\newcommand{\Real}{\mathbb{R}}
\newcommand{\cA}{{\cal A}}\newcommand{\cB}{{\cal B}}\newcommand{\cC}{{\cal C}}
\newcommand{\cD}{{\cal D}}\newcommand{\cE}{{\cal E}}\newcommand{\cF}{{\cal F}}
\newcommand{\cI}{{\cal I}}
\newcommand{\cL}{{\cal L}}
\newcommand{\cM}{{\cal M}}
\newcommand{\cP}{{\cal P}}\newcommand{\cR}{{\cal R}}
\newcommand{\cS}{{\cal S}}\newcommand{\cU}{{\cal U}}
\newcommand{\cV}{{\cal V}}\newcommand{\cX}{{\cal X}}
\newcommand{\cY}{{\cal Y}}
\newcommand{\bc}{{\bf c}}
\newcommand{\bd}{{\bf d}}\newcommand{\bff}{{\bf f}}
\newcommand{\bg}{{\bf g}}\newcommand{\bh}{{\bf h}}
\newcommand{\bm}{{\bf m}}\newcommand{\bn}{{\bf n}}
\newcommand{\bp}{{\bf p}}\newcommand{\bq}{{\bf q}}\newcommand{\br}{{\bf r}}
\newcommand{\bt}{{\bf t}}
\newcommand{\bw}{{\bf w}}\newcommand{\bx}{{\bf x}}
\newcommand{\bH}{{\bf H}}
\newcommand{\bL}{{\bf L}}\newcommand{\bM}{{\bf M}}
\newcommand{\bN}{{\bf N}}
\newcommand{\bQ}{{\bf Q}}
\newcommand{\bT}{{\bf T}}
\newcommand{\tr}{\text{tr}}
\newcommand{\bnu}{\boldsymbol{\nu}}
\newcommand{\bchi}{\boldsymbol{\chi}}
\newcommand{\hhA}{\hat\cA}
\newtheorem{theorem}{Theorem}
\newtheorem{corollary}[theorem]{Corollary}
\newtheorem{definition}[theorem]{Definition}
\newtheorem{proposition}[theorem]{Proposition}
\newtheorem{remark}[theorem]{Remark}
\numberwithin{equation}{section}
\newcommand{\da}{\,\text{d}a}
\newcommand{\dalpha}{\,\text{d}\alpha}
\newcommand{\dbeta}{\,\text{d}\beta}
\newcommand{\dl}{\,\text{d}l}
\def\XXint#1#2#3{{\setbox0=\hbox{$#1{#2#3}{\int}$ }
\vcenter{\hbox{$#2#3$ }}\kern-.6\wd0}}
\newcommand{\beqn}{\begin{equation}}
\newcommand{\eeqn}{\end{equation}}
\newcommand{\bone}{\textbf{1}}
\newcommand{\bzero}{\textbf{0}}
\newcommand{\trans}{{\scriptscriptstyle\mskip-1mu\top\mskip-2mu}}
\newcommand{\II}{I\mskip-2mu I}
\newcommand{\sgn}{\operatorname{sgn}}
\title{Construction of an isometric immersion of a bounded, planar region from a framed curve}
\author{Brian Seguin$^{1}$ and Eliot Fried$^2$
\\[6pt]
\small $^{1}$Department of Mathematics and Statistics\\
\small Loyola University Chicago, Chicago, IL 60660-1537, USA\\
\\[6pt]
\small $^2$Mechanics and Materials Unit,\\
\small Okinawa Institute of Science and Technology\\
\small Onna, Okinawa, Japan 904-0495\\
}
\begin{document}

\maketitle

\begin{abstract}
\noindent
We develop a framework for characterizing isometric immersions of simply connected, bounded, planar regions with piecewise smooth boundaries into three-dimensional space. Each immersion is associated with a framed curve along the boundary of the image surface, comprised by a parametrized curve and a unit normal vector. We identify a set of compatibility and regularity conditions on this framed curve that ensure the existence of a $C^1$ isometric immersion that is $C^2$ almost everywhere and possesses finite bending energy. Under these conditions, we derive an exact dimensional reduction of the bending energy to a line integral over the boundary curve, without relying on asymptotic assumptions or approximations.  By analyzing the behavior of the unit normal vector along the framed boundary, we distinguish between planar and curved regions of the immersed surface. We identify the geometric conditions under which global $C^2$ regularity is potentially lost, in which case the associated immersion belongs to $W^{2,2}$---a Sobolev space that arises naturally in variational models of unstretchable elastic surfaces.\\

\noindent{\emph{Dedicated to Walter Noll on the occasion of his 100th birthday.}}

\end{abstract}

\tableofcontents

\vspace{.2in}

\section{Introduction}

We are concerned with the problem of determining the equilibrium configuration of an unstretchable elastic surface. The earliest contributions to this problem were made by Sadowsky~\cite{S301,S302}, who modeled the M\"obius band as a developable surface formed from a rectangular strip and proposed a variational approach to determine its equilibrium shape, deriving the corresponding Euler--Lagrange equations.\footnote{English translations of these papers were provided by Hinz and Fried~\cite{S302T,S301T}.} Because the material is unstretchable, the admissible deformations must preserve intrinsic distances and are therefore isometries. In current terminology, such deformations are said to be `isometric'.

Sadowsky considered the bending energy of a developable M\"obius band obtained from a rectangular strip through an isometric deformation and carried out a dimensional reduction under the assumption that the width of the band is small compared to its length. This led to an energy functional expressed as an integral over the midline of the band, for which he derived and analyzed the corresponding Euler--Lagrange equations. His derivation relied on the assumption that the curvature of the midline never vanishes. More recently, Freddi, Hornung, Mora, and Paroni~\cite{FHMP16} used the framework of $\Gamma$-convergence to justify a similar reduction while allowing the curvature of the midline to vanish. The limiting energy obtained in their work differs in general from the functional introduced by Sadowsky, but the two coincide if the curvature is everywhere positive and, at each point along the midline, exceeds the absolute value of the torsion. Geometrically, this condition ensures that twisting is sufficiently weak relative to bending, so that the midline remains close to planar at small scales.

Several decades after the publications of Sadowsky, Wunderlich~\cite{W62}\footnote{An English translation of this paper was provided by Todres~\cite{W62T}.} revisited the problem of determining the equilibrium configuration of a developable M\"obius band. Rather than pursuing an asymptotic analysis for small width-to-length ratios, he observed that every such band admits a ruled parameterization. Exploiting this structure, he expressed the bending energy as an exact integral along the midline, thereby obtaining a dimensionally reduced energy functional without imposing restrictions on the aspect ratio of the band. Wunderlich did not provide a minimization problem for determining an equilibrium configuration of a M\"obius band formed by subjecting a material surface in a rectangular reference configuration to an isometric deformation, nor did he derive the Euler--Lagrange equation for his dimensionally reduced bending energy. Seguin, Chen, and Fried~\cite{SCF20} formulated the minimization problem and proposed a structured method for its analysis and solution, which proceeds as follows. Let $\cD$ denote a flat, rectangular reference configuration of a material surface, and let $\bchi$ be an isometric deformation defined on the closure $\bar\cD$ of $\cD$. Granted that the material surface is isotropic and that its bending energy density depends quadratically on the curvature tensor, the total bending energy of the deformed surface $\bchi(\bar\cD)$ is given by
\beqn\label{origEintro}
E[\bchi]=2\omega\int_{\bchi(\bar\cD)}H^2\da,
\eeqn
where $\omega$ is the bending modulus of the material surface and $H$ is the mean curvature of $\bchi(\bar\cD)$. An equilibrium configuration corresponds to a minimizer of this energy over the admissible class
\beqn\label{ominprob}
\text{arg min}\{E[\bchi] \mid \bchi\in\cX\},
\eeqn
where $\cX$ denotes the set of all $C^2$ isometric deformations of $\bar\cD$ that satisfy the boundary conditions on its short edges required to produce a M\"obius band. The method proposed by Seguin, Chen, and Fried~\cite{SCF20} for solving this problem involves the following steps.

\begin{enumerate}

\item For each $\bchi \in \cX$, the deformed surface $\bchi(\bar\cD)$ is ruled and admits a parameterization of the form
\beqn\label{hrpar}
\hat\br(\alpha,\beta)=\bd(\alpha)+\beta\bg(\alpha),\quad (\alpha,\beta)\in\cP,
\eeqn
where $\bd$ is a parametrization of a curve on $\bchi(\bar\cD)$, referred to as the directrix, $\bg(\alpha)$ is a unit vector specifying the direction of the ruling that intersects $\bd$ at the point $\bd(\alpha)$, and $\cP$ is a parameter domain selected so that $\hat\br$ covers $\bchi(\bar\cD)$. There is a corresponding parameterization of the undeformed region $\bar\cD$ given by
\beqn
\hat\bx(\alpha,\beta)=\bc(\alpha)+\beta\bff(\alpha),\quad (\alpha,\beta)\in\cP,
\eeqn
where $\bc$ is a parametrization of a curve in $\bar\cD$, and $\bff$ is defined so that $\bg(\alpha)=\nabla\bchi(\bc(\alpha))\bff(\alpha)$ for all $\alpha$. The curve parametrized by $\bc$ can be chosen independently of the deformation $\bchi$.\footnote{In the present discussion, the focus is on M\"obius bands, but in the broader setting treated by Seguin, Chen, and Fried~\cite{SCF20}, the choice of $\bc$ depends on whether the deformed band is orientable or nonorientable.}

\item Substituting the representation \eqref{hrpar} into the energy expression \eqref{origEintro} yields a double integral over the parameter domain $\cP$, with integration performed first in $\beta$ and then in $\alpha$. The integration with respect to $\beta$ can be carried out explicitly, resulting in the reduced expression
\beqn\label{drbe}
E[\bchi]=\int_\cC\phi\dl,
\eeqn
where $\cC$ is the midline of the band, and the energy density $\phi$, which has dimensions of energy per unit length, depends on $\bd$ and $\bg$, their derivatives, and the width of $\bar\cD$. Since $\bar\cD$ is fixed, the bending energy can be regarded as a functional of the framed curve $(\bd,\bg)$; that is,
\beqn\label{hEdrbe}
\hat E[\bd,\bg]=\int_\cC\phi\dl.
\eeqn

\item The functional $\hat E$ is defined only on those framed curves that correspond to isometric deformations. Let $\cY$ denote the class of such framed curves. The associated variational problem takes the form
\beqn\label{rminprob}
\text{arg min}\{\hat E[\bd,\bg] \mid (\bd,\bg)\in\cY\}.
\eeqn
Seguin, Chen, and Fried~\cite{SCF20} derived the Euler--Lagrange equations corresponding to this problem. Although further analysis was not undertaken, the equations are amenable to analytical or numerical treatment.

\end{enumerate}
An advantage of the dimensionally reduced formulation of Seguin, Chen, and Fried~\cite{SCF20} is that the Euler--Lagrange equations associated with the functional $\hat E$ are ordinary differential equations, whereas the Euler--Lagrange equations for the original energy \eqref{ominprob} are partial differential equations. This simplification may facilitate analysis and also render the problem more tractable by direct methods in the calculus of variations. Given a solution $(\bd,\bg)\in\cY$ of the Euler--Lagrange equations for $\hat E$, Seguin, Chen, and Fried~\cite{SCF20} described a method for constructing a locally injective isometric deformation $\bchi$ of $\bar\cD$ such that the resulting surface $\bchi(\bar\cD)$ is represented by the parameterization \eqref{hrpar}.

The dimensionally reduced minimization problem \eqref{rminprob} is not equivalent to the original problem \eqref{ominprob}. This discrepancy arises for two reasons: (i) the isometric deformation constructed from a framed curve in $\cY$ is only locally injective, whereas elements of $\cX$ are globally injective; and (ii) the regularity of a deformation $\bchi\in\cX$ is $C^2$, while the isometries associated with framed curves in $\cY$ are only piecewise $C^2$. Nevertheless, every $\bchi\in\cX$ determines a corresponding framed curve in $\cY$, and the infimum of $\hat E$ over $\cY$ may be strictly less than the infimum of $E$ over $\cX$.

In this work, we extend the framework of Seguin, Chen, and Fried~\cite{SCF20} to more general reference configurations. Specifically, we take $\cD$ to be any simply connected planar region whose boundary $\partial\cD$ is piecewise smooth, with finitely many corners and finitely many subarcs along which the curvature vanishes. Allowing the curvature of $\partial\cD$ to vanish on finitely many subarcs reflects a desire to accommodate reference configurations with straight segments, in keeping with the broader variational framework of Freddi, Hornung, Mora, and Paroni~\cite{FHMP16}. We define $\cX$ to be the class of $C^2$ isometric immersions of $\bar\cD$ into $\mathbb{R}^3$; that is, locally injective isometries of class $C^2$.

A surface $\cS=\bchi(\cD)$ produced by an isometric immersion cannot generally be represented, in its entirety, by a ruled parameterization of the form \eqref{hrpar}. To ensure that each ruling intersects the directrix and that the choice of directrix remains independent of the immersion $\bchi$, we take $\bd$ to parameterize the boundary $\partial\cS$.\footnote{Chen, Fosdick, and Fried~\cite{CFF22} also used the entire boundary $\partial\cS$ of $\cS$ as a directrix.} Rather than using the ruling direction $\bg$, we frame $\partial \cS$ by pairing $\bd$ with the restriction of a unit normal $\bn$ to $\cS$. This choice has two advantages: $\bg$ is not uniquely defined along all of $\partial\cS$ and need not be continuous, even if $\cS$ is smooth. In contrast, the regularity of $\bn$ is tied directly to that of the immersion $\bchi$. Although it may seem natural to derive a dimensionally reduced bending energy using $\bd$ and $\bg$, we show that an analogous reduction can be achieved using $\bd$ and $\bn$.

The results presented here contribute to the understanding of isometric immersions defined on simply connected, planar regions. In this respect, they align with a broader body of work on such immersions and, more generally, on the geometry of developable surfaces. Foundational results concerning the structure of smooth developable surfaces were established by Hartman and Winter~\cite{HW51} and subsequently rederived by Massey~\cite{M61} using only elementary arguments. These results establish that any smooth developable surface can be decomposed into planar regions and curved regions, with the latter admitting a ruled parameterization. Pogorelov~\cite{P56} showed that this decomposition also applies under weaker regularity assumptions.

Since the image of a planar region under an isometric immersion is a developable surface, the structural results described above extend to isometric immersions. These results have been used to characterize the level sets of the gradient of such immersions. Building on the observation that isometric immersions with finite bending energy of the form \eqref{origEintro} belong to the Sobolev space $W^{2,2}$, several researchers --- including Kirchheim~\cite{K01}, Pakzad~\cite{MRP04}, M\"uller and Pakzad~\cite{MP05}, and Hornung~\cite{ELEH, PHFLSS} --- have examined the structure of immersions exhibiting lower regularity.

Our findings are also related Hornung's~\cite{H24} recent results concerning framed curves arising from local isometric immersions. His analysis begins with a curve embedded in the plane and demonstrates that any isometric immersion defined in a neighborhood of this curve gives rise to a framed curve satisfying a specified set of conditions. A partial converse is also established: any framed curve that satisfies these conditions corresponds to an isometric immersion defined in a neighborhood of the original curve. Whereas our work concerns $C^2$ immersions defined on simply connected planar regions, Hornung's work involves locally defined immersions of $W^{2,2}$ regularity.

The remainder of the paper is organized as follows. In Section~\ref{sectgeo}, we describe the geometric properties of an isometric immersion $\bchi$ defined on the closure $\bar\cD$ of a reference region $\cD$, with particular attention to the subset of $\bar\cD$ on which the second derivative $\nabla\nabla\bchi$ does not vanish. There, we introduce a class of parameterizations determined by the boundary curve $\partial\cS$ of the deformed region $\cS=\bchi(\cD)$, together with the restriction to $\partial\cS$ of a unit normal to $\cS$, thereby defining a framed curve. From the condition that $\bchi$ be an isometric immersion, we derive a set of compatibility and regularity requirements on the framed curve; these are formalized in Definition~\ref{admissfc}. In Section~\ref{sectrbe}, we express the bending energy of $\cS$ as a functional of the framed curve associated with $\partial\cS$. In  Section~\ref{constsur}, we prove our main result, namely that any framed curve satisfying the conditions of Definition~\ref{admissfc} determines a $C^1$ isometric immersion on $\bar\cD$ that is $C^2$ almost everywhere on $\cD$. In Section~\ref{sectcon}, we conclude with a summary and final remarks.

\section{Geometric properties of an isometric immersion}
\label{sectgeo}

Let $\cD\subseteq\cE$ be an open, bounded, simply connected, planar, two-dimensional surface embedded in a three-dimensional Euclidean point space $\cE$ with vector space $\cV$. We will refer to $\cD$ as the reference region, or reference for short. Assume that the boundary $\partial\cD$ of $\cD$ has the following properties:
\begin{itemize}
\item $\partial\cD$ is piecewise $C^2$, so that $\partial\cD$ has a finite number of corners.
\item The curvature of $\partial\cD$, if defined, vanishes on only a finite number of intervals.
\end{itemize}
Roughly speaking, $\partial\cD$ is any closed, simple curve that can be drawn by hand. Consider a $C^2$ isometric immersion $\bchi:\bar\cD\rightarrow\cE$ defined on the closure $\bar\cD$ of $\cD$ and set $\cS=\bchi(\cD)$.

Since $\bchi$ is an isometric immersion, a result due to Massey~\cite{M61} ensures that $\cD$ can be partitioned into subregions of two types:\footnote{There are also results along these lines in the case where $\bchi$ has less regularity. See Pogorelev \cite{P73} and M\"{u}ller and Pakzad \cite{MP05}.}

\begin{itemize}

\item Subsets of $\cD$ where $\nabla\nabla\bchi$ does not vanish. Each such subset is covered by non-intersecting straight line segments with continuously varying orientation and with endpoints on $\partial\cD$. Moreover, $\bchi$ maps each such line segment to a straight asymptotic curve of $\cS$.

\item Subsets of $\cD$ where $\nabla\nabla\bchi$ vanishes. Moreover, $\bchi$ rigidly maps such subsets to flat subsets of $\cS$.

\end{itemize}

We refer to the straight line segments of $\cD$ mentioned in the first bullet point as rulings, and we use the symbol $\cD_c$ to denote the subset of $\bar\cD$ comprised by the union of all such rulings, their endpoints included. The immersion $\bchi$ maps $\cD_c$ onto the curved subset  $\cS_c=\bchi(\cD_c)$ of $\bar\cS$. The straight asymptotic curves that cover $\cS_c$ are also rulings. To avoid ambiguity, we sometimes find it helpful to use the terms `reference ruling' and `image ruling' to distinguish the rulings of $\cD_c$ from those of $\cS_c$. To further clarify the notion of a reference ruling, we record the following observations:

\begin{itemize}

\item It is possible for two or more rulings to share an endpoint on $\partial\cD$. If this occurs, we say that the rulings involved meet at their shared endpoint.

\item The length of a ruling must be positive.

\item The straight line segments that bound $\cD_c$ are not considered to be rulings.

\end{itemize}
An example of such a partitioned reference region, including several reference rulings, is shown in Figure~\ref{Dfig}.

\begin{figure}[h]
\centering
\includegraphics[width=4in]{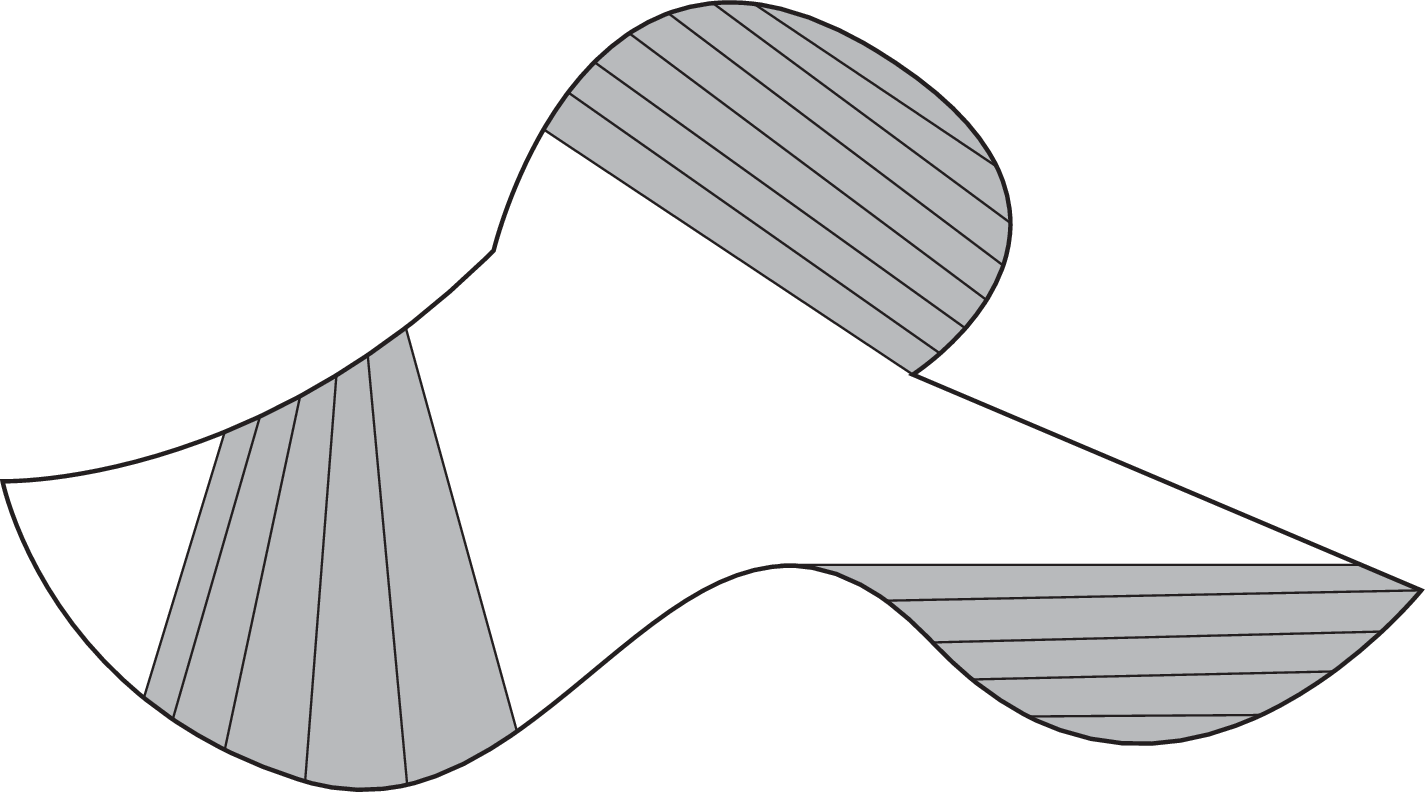}
\thicklines
\put(-209,91.5){$\bullet$}
\put(-215,100){$\bp_2$}
\put(-187,9.5){$\bullet$}
\put(-185,2){$\bq_2$}
\put(-112.5,36){$\bullet$}
\put(-113,27.5){$\bp_1$}
\put(-3.5,38){$\bullet$}
\put(-5,29.5){$\bq_1$}
\caption{Reference region $\cD$ partitioned into subregions according to whether the second derivative $\nabla\nabla\bchi$ of an isometric immersion $\bchi$ vanishes. The grey zones correspond to parts of $\cD$ where $\nabla\nabla\bchi$ is nonzero and are covered by interior reference rulings. The white zones correspond to flat parts of $\cD$ where $\nabla\nabla\bchi$ vanishes. Several reference rulings are drawn in the grey zones, including the line segment $[\bp_1,\bq_1]$. By contrast, the line segment $[\bp_2,\bq_2]$ lies on the boundary of a grey zone and is not a reference ruling.}
\label{Dfig}
\end{figure}

Although rulings can meet on $\partial\cD$, the next result shows that this occurs only under specific conditions that apply only infrequently.

\begin{proposition}\label{rulingprop}
Two rulings can meet on $\partial\cD$ only if they are collinear. Moreover, the set $\cI\subseteq\partial\cD$ of all points on $\partial\cD$ at which rulings meet is finite.
\end{proposition}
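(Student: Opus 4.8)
The plan is to argue geometrically using the rigidity of the asymptotic directions and the flatness that each ruling enforces on a neighborhood. First I would recall from the Massey-type structure that each reference ruling is a maximal straight segment in $\cD_c$ with endpoints on $\partial\cD$, and that $\bchi$ maps it to a straight asymptotic line of $\cS$ along which the tangent plane of $\cS$ is constant. Fix a point $\bp\in\partial\cD$ and suppose two rulings $\ell_1,\ell_2$ meet at $\bp$. The key observation is that in a one-sided neighborhood of $\bp$ inside $\cD$, the surface $\cS$ near $\bchi(\bp)$ is developable, so its Gauss map is constant along rulings and the rulings sweeping out that neighborhood cannot cross in the interior; hence the rulings emanating from near $\bp$ form a ``fan'' that degenerates at $\bp$. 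I would show that along $\ell_1$ and $\ell_2$ the image tangent plane of $\cS$ must agree (both equal the tangent plane at $\bchi(\bp)$, by continuity of $\nabla\bchi$ since $\bchi$ is $C^2$), and then that two distinct rulings through a common point carrying the \emph{same} asymptotic tangent plane force, via the developability relation $\nabla\nabla\bchi\,[\bt,\cdot]=\bzero$ for a ruling direction $\bt$, the second fundamental form to vanish on the wedge between them --- so that wedge is flat, contradicting the definition of a ruling (rulings lie in $\cD_c$, where $\nabla\nabla\bchi\neq\bzero$ on a full neighborhood) unless $\ell_1$ and $\ell_2$ are collinear.

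More concretely, for the collinearity claim I would set up coordinates so that $\bp=\bzero$ and parametrize the rulings near $\bp$ by their angle; continuity of the ruling field on $\cD_c$ (part of the Massey structure: orientations vary continuously) together with the fact that distinct rulings are disjoint in the open region means the angle is monotone, and a ruling hitting the boundary point $\bp$ from ``both sides'' of the fan can only happen if the fan has collapsed to a single line, i.e. $\ell_1$ and $\ell_2$ lie on one line through $\bp$. The alternative --- a genuine vertex of rulings at $\bp$, like the apex of a cone --- is excluded because $\bchi$ is $C^2$ up to the boundary and a cone point is not $C^2$; this is where I would invoke that $\nabla\bchi$ is continuous at $\bp$, so the image tangent plane is well defined there and cannot rotate as we pass from $\ell_1$ to $\ell_2$ through the fan, forcing the fan to be trivial.

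For the finiteness of $\cI$, the strategy is to show each point of $\cI$ is isolated in $\partial\cD$ and use compactness of $\bar\cD$. A point $\bp\in\cI$ is, by the first part, a point where (at least) two collinear rulings abut; I would argue that such a configuration forces a local ``bite'' out of the boundary --- the boundary curve must cross the common ruling line transversally on both sides of $\bp$, or $\bp$ must be a corner of $\partial\cD$, or an endpoint of one of the finitely many flat subarcs where the curvature of $\partial\cD$ vanishes. Corners are finite by hypothesis; endpoints of the finitely many zero-curvature subarcs are finite; and I would show the remaining possibility (a smooth, strictly-curved arc of $\partial\cD$ meeting a ruling line that is also met by another ruling from the other side) cannot accumulate, because near such a point the geometry of $\cD_c$ --- a continuously varying, non-crossing ruling field --- together with strict convexity/concavity of $\partial\cD$ there pins the configuration down to an isolated occurrence. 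If an infinite sequence $\bp_n\in\cI$ accumulated at some $\bp_\infty\in\partial\cD$, then $\bp_\infty$ would itself be a degenerate accumulation of ruling-meeting points, contradicting the local rigidity just established (or the local finiteness of corners and flat subarcs).

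The main obstacle I anticipate is the finiteness claim rather than the collinearity claim: ruling out an \emph{accumulation} of meeting points on a smooth strictly curved portion of $\partial\cD$ requires a careful local analysis of how the ruling field behaves near the boundary, using both the continuity of the ruling orientation on $\cD_c$ and the fact that the endpoints of rulings trace out $\partial\cD$ in a controlled (locally monotone) way. I expect to need a lemma to the effect that on any smooth strictly-curved boundary subarc the map sending a ruling to its boundary endpoints is, locally, well enough behaved that ``double endpoints'' (points where two rulings land) are isolated --- essentially a statement that the envelope of the ruling family, where it would meet $\partial\cD$, touches $\partial\cD$ discretely. Once that local statement is in hand, a standard compactness argument over $\bar\cD$ closes the proof.
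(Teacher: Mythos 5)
Your argument for the collinearity claim is essentially the paper's own: both proofs use the continuity of the ruling directions on $\cD_c$ and the impossibility of interior crossings to produce a fan of rulings converging to the common boundary point, then observe that the unit normal is constant along each image ruling and single-valued at the common image point (here is where the $C^2$ regularity of $\bchi$ enters), so the whole fan carries one normal and its image is planar --- contradicting $\nabla\nabla\bchi\ne\bzero$ on $\cD_c$. The one small omission is the reflex case: the paper first disposes of an interior angle exceeding $\pi$ (nearby rulings would be forced to intersect inside $\cD$) before running the fan argument for angles less than $\pi$; your ``monotone angle'' remark gestures at this but you should make the dichotomy explicit.

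For finiteness your route diverges from the paper's, and the divergence is exactly where your acknowledged gap sits. You propose to show each point of $\cI$ is isolated via a case analysis (corners, endpoints of flat subarcs, strictly curved arcs) plus a compactness argument, and you concede that the key local lemma --- that ``double endpoints'' on a strictly curved arc are isolated --- is unproved. The paper does not need that lemma: it argues directly that if $\cI$ were infinite it would have a cluster point, and the rulings attached to infinitely many nearby points of $\cI$ would be forced to intersect inside $\cD$, contradicting their disjointness. The mechanism you are missing is that, by the first part of the proposition, at each point of $\cI$ the two meeting rulings are collinear, so their common line must locally support (be tangent to) $\partial\cD$ there or pass through a corner; an accumulation of such tangent configurations along $\partial\cD$ produces nearby rulings of positive length lying along nearly coincident tangent lines, and these must cross in the interior. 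In other words, the non-crossing of rulings --- which you already invoke for collinearity --- is also the single fact that kills accumulation; no envelope analysis or curvature case split is required. If you retain your isolation-plus-compactness structure, you must still prove your local lemma, and the proof of that lemma will reduce to this same crossing argument, so the detour buys nothing.
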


\begin{proof}
Contrary to the first part of the proposition, suppose that two rulings meet at a point $\bx\in\partial\cD$ but are not collinear. Then, those rulings must form an interior angle, with respect to $\bar\cD$, which differs from $\pi$. Suppose that the angle in question is strictly greater than $\pi$. Then, since the direction of rulings on $\cD_c$ vary continuously and since $\cD_c$ is open relative to $\bar\cD$, there is a neighborhood of $\bx$ within which rulings intersect, which is impossible, or meet at $\bx$, which is plausible only for angles that are strictly less than $\pi$. Granted that the angle at which the rulings meet at $\bx$ is strictly less than $\pi$, then, again since the direction of rulings of $\cD_c$ are vary continuously and since $\cD_c$ is open relative to $\bar\cD$, there exists a continuous array of rulings on one side of any ruling $\cR$ with an endpoint at $\bx$. Consider the corresponding collection of image rulings that meet at the image $\br=\bchi(\bx)$ on $\bar\cS$ of $\bx$, each of which is an asymptotic curve of $\cS$ and, thus, is a locus along which the unit normal to $\cS$ is constant. Since the unit normals along the image rulings that meet at $\br$ must agree, the portion of $\cS_c$ that is covered by those rulings must be planar. However, this contradicts the understanding that $\cS_c$ is curved. Two rulings that meet at a point $\bx\in\partial\cD$ must therefore be collinear, as proposed.

Contrary to the second part of the proposition, suppose that the set $\cI\subseteq\partial\bar\cD$ of all points at which rulings meet is infinite. A cluster point $\bx\in\partial\cD$ would then exist, surrounded by infinitely many points from $\cI$. The reference rulings associated with $\cI$ would then necessarily intersect within $\cD$. This, however, contradicts the understanding that rulings cannot intersect in $\cD$. The set $\cI$ must therefore be finite, as proposed.
\end{proof}

Since $\bchi$ is locally injective, for each $\bx\in\bar\cD$ there is a set $\cU$, open relative to $\bar\cD$, such that the restriction of $\bchi$  to $\cU$ is injective. Since $\bchi$ is $C^2$, the image $\bchi(\cU)$ of $\cU$ is a $C^2$ surface which can be oriented by identifying a $C^1$ unit-normal function. The additive inverse of the surface gradient of the unit-normal is the curvature tensor $\bL$ of $\bchi(\cU)$. Consider the pullback $\bH$, to $\cU$, of $\bL$, defined by
\beqn\label{Lpb}
\bH(\bx)=\nabla\bchi(\bx)^\trans\bL(\bchi(\bx))\nabla\bchi(\bx),\quad \bx\in\cU.
\eeqn
Notice that $\bH$ does not depend on the choice of $\cU$, as its definition via gradients relies on local properties. Thus, $\bH$ is defined throughout $\bar\cD$. Moreover, we can ensure that $\bH$ is continuous on $\bar\cD$ by appropriately selecting the orientations of the patches $\bchi(\cU)$ that constitute $\bar\cS$. Rather than partitioning $\cD$ using $\nabla\nabla\bchi$ as above, this partition is also determined by where $\bH$ does or does not vanish since $|\bH|=|\nabla\nabla\bchi|$.

\subsection{Parametrizations}\label{sectpar}

We next introduce parameterizations of almost all of $\cD_c$ and almost all of $\cS_c$. To achieve this, we begin by setting $\cM=[0,\ell]$, where $\ell$ designates the length of $\partial\cD$. We will use a periodic topology on $\cM$, whereby $0$ is identified with $\ell$ but the orientation associated with the interval is maintained. Given an arclength parameterization $\bc:\cM\rightarrow\partial\cD$ of $\partial\cD$, and let $\cF$ denote the finite subset of $\cM$ on which $\bc$ is not differentiable, it then follows that $\bd=\bchi\circ\bc$ is an arclength parameterization of $\partial\cS$ and that is $C^2$ except on $\cF$. 

Let $\{\bT,\bM,\bN\}$ be the Darboux frame of the curve parameterized by $\bc$ relative to $\bar\cD$ such that $\bT=\bc'$, $\bN$ is a unit normal to $\bar\cD$, and $\bM=\bN\times\bT$ points into $\bar\cD$. The structure equations for this frame are
\beqn\label{DFr}
\bT'=\kappa_\bc\bM,\qquad \bM'=-\kappa_\bc\bT,\qquad\text{and}\qquad \bN'=\bzero,
\eeqn
where $\kappa_\bc$ is the geodesic (or signed) curvature of $\bc$. These objects are only defined off of $\cF$, the points where $\bc$ is not differentiable, except for the normal $\bN$, which is defined on all of $\cM$. Although $\bN$ is defined on all of $\cM$, $\bT$ and $\bM$ are defined only on $\cM\setminus\cF$. Using $\bN$, we use $\theta[\bc,\bN](\alpha)$ to denote the external angle of the curve relative to $\bN$ at $\alpha\in\cM$. In other words, $\theta[\bc,\bN](\alpha)$ is the angle of rotation about $\bN$ that must be applied to the tangent vector $\lim_{\alpha'\uparrow\alpha}\bc'(\alpha')$ to obtain $\lim_{\alpha'\downarrow\alpha}\bc'(\alpha')$. Notice that on the set $\cM\setminus\cF$, the function $\theta[\bc,\bN]$ vanishes. For $\alpha\in\cF$, the previously mentioned limits exist since $\bc$ is piecewise $C^2$.

Letting $\cA$ be the subset of $\cM$ defined by
\begin{equation}\label{defA}
\cA=\{\alpha\in\cM\ |\ \text{there is a unique ruling in $\cD_c$ with endpoint $\bc(\alpha)$}\},
\end{equation}
and we define $\bff$ such that, for each $\alpha\in\cA$, $\bff(\alpha)$ is the unique unit vector that is directed into $\cD$ along the unique ruling that has $\bc(\alpha)$ as an endpoint: 
\beqn
|\bff(\alpha)|=1,
\qquad
0\le\bff(\alpha)\cdot\bM(\alpha)\le1,
\qquad
\bff(\alpha)\cdot\bN=0.
\eeqn
Finally, define $\bg$ such that, for each $\alpha\in\cA$,
\begin{equation}
\bg(\alpha)=(\nabla\bchi(\bc(\alpha)))\bff(\alpha)
\label{gdfn}
\end{equation}
points in the direction of the image ruling emanating from $\bd(\alpha)$. A result of Hartman and Wintner \cite{HW51} can be used to show that $\bg$ is $C^1$ on $\cA$. Since $\bchi$ is $C^2$ on $\bar\cD$, we see from \eqref{gdfn} that $\bff$ is also $C^1$ on $\cA$.

For each $\alpha\in\cA$, let $\hat\beta:\cA\to(0,\infty)$ be defined such that $\hat\beta(\alpha)$ is the length of the reference ruling with endpoint $\bc(\alpha)$. Then, since $\bchi$ is an isometric immersion, the image ruling with endpoint $\bd(\alpha)$ also has length $\hat\beta(\alpha)$. From the geometry of $\partial\cD$, we infer that, for each $\alpha\in\cA$,
\beqn\label{bbetarep}
\hat\beta(\alpha)=\min\{ |\bc(\alpha')-\bc(\alpha)|\ | \ \alpha'\in\cM\ \text{and}\ \bc(\alpha')=\bc(\alpha)+\beta\bff(\alpha)\ \text{for}\ \beta>0\}.
\eeqn
For any subset $\cA'\subseteq\cA$, let $\cP(\cA')$ be given by
\beqn\label{parc}
\cP(\cA')=\{(\alpha,\beta)\in\cM\times\Real\ |\ \alpha\in\cA',\ \beta\in[0,\hat\beta(\alpha)]\},
\eeqn
and consider the parameterization $\hat\bx$ defined such that
\beqn\label{hx}
\hat\bx(\alpha,\beta)=\bc(\alpha)+\beta\bff(\alpha),\qquad (\alpha,\beta)\in\cP(\cA).
\eeqn
From \eqref{parc}, we see that the image of $\hat\bx$ is a subset of $\cD_c$. We also notice that $\hat\bx$ is not a bijection since it is not one-to-one on $\cP(\cA)$.

Since the image $\cS_c$ of $\cD_c$ is also ruled, the portion of $\cS_c$ that is determined by the image of $\hat\bx$ admits a parametrization $\hat\br$ of the form
\beqn\label{hr}
\hat\br(\alpha,\beta)=\bd(\alpha)+\beta\bg(\alpha),\qquad (\alpha,\beta)\in\cP(\cA).
\eeqn
The parameterizations $\hat\bx$ and $\hat\br$ must satisfy $\hat\br(\alpha,\beta)=\bchi(\hat\bx(\alpha,\beta))$ for $(\alpha,\beta)\in\cP(\cA)$ and, thus, do not provide a complete characterization the immersion $\bchi$. It should be emphasized that $\hat\bx$ and $\hat\br$ do not parameterize the entire sets $\cD_c$ and $\cS_c$, respectively.

By construction, and wherever $\bc$ is differentiable,
\beqn\label{parnorm}
|\bc'|=|\bd'|=1\quad\text{and}\quad |\bff|=|\bg|=1.
\eeqn
Moreover, since $\bchi$ is an isometric immersion, the metric coefficients of $\hat\bx$ and $\hat\br$ must agree, from which we conclude that $\bc$, $\bff$, $\bd$, and $\bg$ must satisfy 
\beqn\label{parcond}
\quad |\bff'|=|\bg'|,\qquad \bc'\cdot\bff=\bd'\cdot\bg,\qquad\text{and}\qquad \bc'\cdot\bff'=\bd'\cdot\bg'.
\eeqn

Let $\{\bt,\bm,\bn\}$ be the Darboux frame of the curve parameterized by $\bd$ relative to $\bar\cS$, so that $\bt=\bd'$, $\bn$ is a unit normal to $\bar\cS$, $\bm=\bn\times\bt$, and $\bm$ points into $\bar\cS$. The structure equations of this frame are 
\beqn\label{DFs}
\bt'=\kappa_g\bm+\kappa_n\bn,\qquad \bm'=-\kappa_g\bt+\tau_g\bn,\qquad\text{and}\qquad \bn'=-\kappa_n\bt-\tau_g\bm,
\eeqn
where $\kappa_g$, $\kappa_n$, and $\tau_g$ are the geodesic curvature, normal curvature, and geodesic torsion, respectively, of the curve parameterized by $\bd$ relative to $\bar\cS$. Although $\bn$ is defined on all of $\cM$, $\bt$ and $\bm$ are defined only on $\cM\setminus\cF$.

Since $\bchi$ is an isometry, $\nabla\bchi$ must satisfy
\beqn\label{orthdefn}
(\nabla\bchi)^{\trans}\nabla\bchi=\bone,
\eeqn
where $\bone$ denotes the two-dimensional identity tensor. We see from \eqref{Lpb} and \eqref{orthdefn} that the quantities
\beqn
H=\tfrac{1}{2}\tr\, \bH\qquad\text{and}\qquad K=\det\bH
\label{HandK}
\eeqn
determine the mean and Gaussian curvatures of $\cS$ as functions of points in $\bar\cD$. The restrictions of $H$ and $K$ to $\partial\cD$ are related to $\kappa_g$, $\kappa_n$, and $\tau_g$ by the identity
\beqn\label{conSid}
2\kappa_n(\alpha)H(\bc(\alpha))-K(\bc(\alpha))=\kappa_n^2(\alpha)+\tau_g^2(\alpha),
\eeqn
a proof of which is provided by, for instance, Beetle~\cite{Beetle}. Of course, since $\bar\cD$ is planar and $\bchi$ is an isometry, $K=0$ on $\bar\cD$ and \eqref{conSid} reduces to
\beqn\label{conSid2}
2\kappa_n(\alpha)H(\bc(\alpha))
=\kappa_n^2(\alpha)+\tau_g^2(\alpha).
\eeqn
From \eqref{DFs}$_3$ and \eqref{conSid}, we find that
\beqn\label{nccond9}
\kappa_n=0
\quad\Longrightarrow\quad
\bn'={\rm \bf 0}
\quad\text{on}\quad
\cM\setminus\cF.
\eeqn

Being an isometric immersion, $\bchi$ must preserve the geodesic curvature of any $C^1$ curve on $\bar\cD$. Hence, in particular, the geodesic curvature $\kappa_g$ of the curve parametrized by $\bd$ relative to $\bar\cS$ must be equal to the geodesic curvature $\kappa_\bc$ of $\partial\cD$; thus, by \eqref{DFs}$_1$,
\beqn\label{currel2}
\kappa_g=(\bd'\times\bd'')\cdot\bn=\kappa_\bc.
\eeqn
Moreover, $\bchi$ must preserve angles between any pair of linearly independent vectors in the tangent space at each point on $\bar\cD$. In particular, bearing in mind that $\bn$ is defined for all $\alpha\in\cM$, the value $\theta[\bd,\bn](\alpha)$ at $\alpha\in\cM$ of the exterior angle of the curve parametrized by $\bd$ relative to $\bar\cS$ must satisfy
\beqn\label{extangle}
\theta[\bd,\bn](\alpha)=\theta[\bc,\bN](\alpha).
\eeqn

Since $\bg$ is a unit vector that points into and is tangent to $\cS$, there exist functions $g_\bt$ and $g_\bm$ defined on $\cA$ such that
\beqn\label{defg}
g_\bt^2+g_\bm^2=1,\qquad g_\bm\ge0, \qquad \text{and}\qquad \bg=g_\bt\bt+g_\bm\bm.
\eeqn
Since $\bg$, $\bt$, and $\bm$ are $C^1$ on $\cA\setminus\cF$, so are $g_\bt$ and $g_\bm$. Differentiating \eqref{defg}$_1$, we see that
\beqn\label{gcompconst}
g_\bt g_\bt'+g_\bm g_\bm'=0.
\eeqn
Furthermore, differentiating \eqref{defg}$_3$ and using \eqref{DFs}, we find that $\bg'$ admits a representation of the form
\beqn\label{g'rep0}
\bg'=(g_\bt'-g_\bm\kappa_g)\bt+(g_\bm'+g_\bt\kappa_g)\bm+(g_\bt\kappa_n+g_\bm\tau_g)\bn.
\eeqn
However, since the ruled parameterization \eqref{hr} describes part of $\bar\cS$, which  is a developable surface, $\bg'$ must satisfy\footnote{See, for example, Struik \cite[\S5-5]{S61}.}
\beqn\label{gK=0}
\bg'\cdot\bn=0.
\eeqn
Applying \eqref{gK=0} to \eqref{g'rep0}, we obtain the identity
\beqn\label{K=0}
g_\bt\kappa_n+g_\bm\tau_g=0
\eeqn
and, hence, arrive at a reduced version of \eqref{g'rep0}:
\beqn\label{g'rep}
\bg'=(g_\bt'-g_\bm\kappa_g)\bt+(g_\bm'+g_\bt\kappa_g)\bm.
\eeqn
Noticing from \eqref{g'rep} that
\beqn\label{g'repc}
|\bg'|^2=(\kappa_g-g_\bm g_\bt'+g_\bt g_\bm')^2,
\eeqn
we introduce the notation
\beqn\label{defG}
G=\kappa_g-g_\bm g_\bt'+g_\bt g_\bm'
\eeqn
which, with the aid of \eqref{gcompconst}, allows us to express $\bg'$ as
\beqn\label{g'rep2}
\bg'=-G(g_\bm\bt-g_\bt\bm).
\eeqn
Since image rulings are asymptotic curves, which have zero normal curvature, and a ruling is parallel to $\bd$ exactly if $g_\bm=0$, Meusnier's theorem\footnote{See, for example, Struik \cite[\S2-5]{S61}.} implies that
\beqn\label{asymcond}
\kappa_n=0
\quad\Longleftrightarrow\quad g_\bm=0 
\quad\text{on}\quad\cA.
\eeqn

\begin{remark}
{\rm With reference to \eqref{defg}, it would be permissible to introduce an angle $\theta\in[0,\pi]$, depending on $\alpha$, such that $g_\bt=\cos\theta$ and $g_\bm=\sin\theta$. If this were done, then $G$ would be given by $G=\kappa_g+\theta'$. This approach has been taken by others. See, for example, Chen, Fosdick, and Fried~\cite{CFF22}.}
\end{remark}

\subsection{Endpoints of rulings}

For each $\alpha\in\cA$, consider the ruling with endpoint $\bc(\alpha)$ and orientation $\bff(\alpha)$. The other endpoint of this ruling is also on $\partial\cD$. It is thus possible to implicitly define a function\footnote{This function was first conceived of and used by Chen, Fosdick, and Fried~\cite{CFF22}.} $\mu:\cA\rightarrow\cM$ such that, given $\alpha\in\cA$,
\beqn\label{mudef}
\bc(\mu(\alpha))=\bc(\alpha)+\hat\beta(\alpha)\bff(\alpha).
\eeqn
Since reference rulings are transformed into image rulings, we must also have
\beqn\label{M10}
\bd(\mu(\alpha))=\bd(\alpha)+\hat\beta(\alpha)\bg(\alpha),\quad \alpha\in\cA.
\eeqn
We next establish some useful properties of $\hat\beta$ and $\mu$.

\begin{proposition}\label{bbetamureg1}
For each $\alpha\in\cA\setminus\cF$ such that $\mu(\alpha)\in\cA\setminus\cF$ and $g_\bm(\mu(\alpha))>0$,
\begin{itemize}
\item $\mu$ is an involution in the sense that
\beqn
\mu(\mu(\alpha))=\alpha,
\label{mureflexive}
\eeqn
\item there exists a neighborhood of $\alpha$ upon which $\hat\beta$ and $\mu$ are $C^1$,
\item $\mu'$ satisfies the inequality
\beqn
\mu'(\alpha)\le0,
\label{muprimele0}
\eeqn
which applies strictly if $g_\bm(\alpha)>0$.
\end{itemize}
\end{proposition}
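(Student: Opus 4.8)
The plan is to extract everything from the single geometric identity \eqref{M10}, which says that the image ruling through $\bd(\alpha)$ reaches $\partial\cS$ at $\bd(\mu(\alpha))$. First I would establish the involution property \eqref{mureflexive}. Geometrically, the ruling with endpoint $\bc(\alpha)$ and direction $\bff(\alpha)$ is the \emph{same} segment as the ruling with endpoint $\bc(\mu(\alpha))$ traversed in the opposite direction; since rulings in $\cD_c$ do not intersect and vary continuously, the hypothesis $\mu(\alpha)\in\cA\setminus\cF$ guarantees that $\bc(\mu(\alpha))$ is the endpoint of a \emph{unique} ruling, which must therefore be this very segment. Hence $\bff(\mu(\alpha))=-\bff(\alpha)$, $\hat\beta(\mu(\alpha))=\hat\beta(\alpha)$, and feeding this back into \eqref{mudef} gives $\bc(\mu(\mu(\alpha)))=\bc(\alpha)$; since both $\mu(\mu(\alpha))$ and $\alpha$ lie in $\cA\setminus\cF$ where $\bc$ is locally injective, $\mu(\mu(\alpha))=\alpha$. (I would also record $\bg(\mu(\alpha))=-\bg(\alpha)$ from \eqref{gdfn}.)

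Next I would prove the local $C^1$ regularity by an implicit-function argument applied to \eqref{mudef}, or equivalently \eqref{M10}. Define $F(\alpha,\alpha',\beta)=\bc(\alpha)+\beta\bff(\alpha)-\bc(\alpha')$; near the point in question $\alpha\in\cA\setminus\cF$ and $\bff$, $\bc$ are $C^1$ (by the remarks following \eqref{gdfn} and the piecewise-$C^2$ hypothesis on $\partial\cD$), so $F$ is $C^1$ in all arguments. We have $F(\alpha,\mu(\alpha),\hat\beta(\alpha))=\bzero$, and the partial derivative with respect to $(\alpha',\beta)$ is the $2\times2$ matrix with columns $-\bc'(\mu(\alpha))=-\bt(\mu(\alpha))$ (using $\bc'=\bt$ up to the immersion) — more precisely the in-plane tangent $\bT(\mu(\alpha))$ — and $\bff(\alpha)$. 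This matrix is invertible exactly when $\bff(\alpha)$ is not parallel to the boundary tangent at $\bc(\mu(\alpha))$, i.e.\ when $\bff(\alpha)$ has a nonzero component along $\bM(\mu(\alpha))$. By \eqref{gdfn} and the isometry, that component equals $\bg(\alpha)\cdot\bm(\mu(\alpha))$ up to sign; using $\bg(\mu(\alpha))=-\bg(\alpha)$ this is $\pm g_\bm(\mu(\alpha))$, which is nonzero precisely by the hypothesis $g_\bm(\mu(\alpha))>0$. The implicit function theorem then yields a neighborhood of $\alpha$ on which $\mu$ and $\hat\beta$ are $C^1$.

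Finally, for the inequality \eqref{muprimele0} I would differentiate \eqref{M10} with respect to $\alpha$. Using $\bd'=\bt$, $\bg'=-G(g_\bm\bt-g_\bt\bm)$ from \eqref{g'rep2}, and writing $\bg=g_\bt\bt+g_\bm\bm$, the right-hand side derivative is
\beqn
\bt(\alpha)+\hat\beta'(\alpha)\bg(\alpha)+\hat\beta(\alpha)\bigl(-G(\alpha)\bigr)\bigl(g_\bm(\alpha)\bt(\alpha)-g_\bt(\alpha)\bm(\alpha)\bigr),
\eeqn
while the left-hand side is $\mu'(\alpha)\,\bt(\mu(\alpha))$. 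Projecting this vector identity onto a suitable direction at $\bd(\mu(\alpha))$ — I expect the inward normal $\bm(\mu(\alpha))$ to be the useful one, since $\bt(\mu(\alpha))$ is a linear combination of $\bt(\alpha),\bm(\alpha)$ with coefficients involving $g_\bt(\alpha),g_\bm(\alpha)$ — isolates $\mu'(\alpha)$ in terms of $\hat\beta$, $G$, and the frame components. Comparing $\bt(\mu(\alpha))=g_\bt(\mu(\alpha))\bt'' +\dots$; concretely, since $\bg(\mu(\alpha))=-\bg(\alpha)$ one gets $\bt(\mu(\alpha))$ expressed via the frame at $\alpha$, and taking the $\bm(\alpha)$-component of the differentiated \eqref{M10} should give an expression of the form $\mu'(\alpha)\,(\text{nonneg.})=-(\text{nonneg.})$, with the factor $g_\bm(\alpha)$ appearing on the right so that the inequality is strict when $g_\bm(\alpha)>0$ and becomes an equality $\mu'(\alpha)\cdot 0 = 0$-type degeneracy when $g_\bm(\alpha)=0$. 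I expect the main obstacle to be exactly this bookkeeping: carefully resolving $\bt(\mu(\alpha))$, $\bm(\mu(\alpha))$ in the frame $\{\bt(\alpha),\bm(\alpha),\bn(\alpha)\}$ using $\bg(\mu(\alpha))=-\bg(\alpha)$ together with the angle-preservation relation \eqref{extangle}, and then checking that the sign of $G$ (equivalently, the sign convention built into $g_\bm\ge 0$ and the orientation of the ruling) comes out so that $\mu'\le 0$. The involution \eqref{mureflexive} will be a useful consistency check here, since differentiating $\mu(\mu(\alpha))=\alpha$ forces $\mu'(\alpha)\mu'(\mu(\alpha))=1$, which is compatible with $\mu'\le0$ only if both factors are negative — reassuring but also a constraint the sign analysis must respect.
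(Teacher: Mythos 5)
Your first two bullets track the paper's proof essentially verbatim: the involution follows from the uniqueness of the ruling emanating from $\bc(\mu(\alpha))$, and the local $C^1$ regularity follows from the implicit function theorem applied to $(\beta,\gamma)\mapsto\bc(\alpha)+\beta\bff(\alpha)-\bc(\gamma)$, with invertibility of the relevant linear map coming from the non-tangency encoded in $g_\bm(\mu(\alpha))>0$. Those parts are fine.

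The gap is in the third bullet. If you carry out the projection you propose (differentiate \eqref{M10} and take the vector product with $\bg(\alpha)$, using $\bg(\mu(\alpha))=-\bg(\alpha)$ and the constancy of $\bn$ along the ruling), you land on
\beqn
-\mu'(\alpha)\,g_\bm(\mu(\alpha))=g_\bm(\alpha)-\hat\beta(\alpha)G(\alpha).
\eeqn
To conclude $\mu'(\alpha)\le0$ from this you must know that the right-hand side is nonnegative. That is exactly the content of Proposition~\ref{propinjcondp}, a separate and nontrivial fact which the paper proves later by its own limiting argument based on non-intersection of rulings; it is not available here, and your phrase ``an expression of the form $\mu'(\text{nonneg.})=-(\text{nonneg.})$'' silently assumes it. Worse, the strictness you hope to read off this identity would require $g_\bm(\alpha)-\hat\beta(\alpha)G(\alpha)>0$, which does not follow from $g_\bm(\alpha)>0$ alone --- in the paper that strict inequality is deduced \emph{from} $\mu'(\alpha)\ne0$ in Corollary~\ref{injcondcor}, so invoking it here would be circular. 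The paper sidesteps all of this bookkeeping: $\mu'\le0$ is immediate from the observation that non-crossing of rulings forces $\mu$ to reverse order, so every difference quotient $(\mu(\alpha)-\mu(\alpha_0))/(\alpha-\alpha_0)$ is nonpositive; and strictness under $g_\bm(\alpha)>0$ comes precisely from differentiating $\mu(\mu(\alpha))=\alpha$ to get $\mu'(\mu(\alpha_0))\mu'(\alpha_0)=1$ --- the argument you relegate to a ``consistency check'' is in fact the proof of that part, and it works because $g_\bm(\mu(\mu(\alpha_0)))=g_\bm(\alpha_0)>0$ guarantees differentiability of $\mu$ at $\mu(\alpha_0)$.
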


\begin{proof}
Fix $\alpha_0\in\cA\setminus\cF$ and assume that $\mu(\alpha_0)\in\cA\setminus\cF$ and that $g_\bm(\mu(\alpha_0))>0$. Since $\mu(\alpha_0)\in\cA$, there is a unique ruling that emanates from $\bc(\mu(\alpha_0))$ and must be identical to the ruling that emanates from $\bc(\alpha_0)$. Thus, we confirm that \eqref{mureflexive} holds for $\alpha=\alpha_0$.

Next, consider the function $\bh$ defined by
\beqn
\bh(\alpha,\beta,\gamma)=\bc(\alpha)+\beta\bff(\alpha)-\bc(\gamma),\qquad (\alpha,\beta,\gamma)\in\cM\times \Real\times \cM.
\label{hdfn}
\eeqn
Referring to the definition \eqref{mudef} of $\mu$, we infer that, for any choice of $\alpha\in\cA$ that is sufficiently close to $\alpha_0$,
\beqn\label{gcond}
\bh(\alpha,\hat\beta(\alpha),\mu(\alpha))=\bzero.
\eeqn
Bearing in mind that there is a neighborhood of $\alpha_0$ in which $\bc$ and $\bff$ are $C^1$, we see from \eqref{hdfn} that $\bh$ is $C^1$ in a neighborhood of $(\alpha_0,\hat\beta(\alpha_0),\mu(\alpha_0))$. Let $\hat\bh(\beta,\gamma)=\bh(\alpha_0,\beta,\gamma)$, and notice that the gradient $\nabla\hat\bh$ of $\hat\bh$ is a linear mapping from $\Real^2$ to a two-dimensional vector space. The linear transformation arising from evaluating $\nabla \hat\bh$ at $(\hat\beta(\alpha_0),\mu(\alpha_0))$ is given by
\beqn\label{gradh}
\nabla \hat\bh (\hat\beta(\alpha_0),\mu(\alpha_0)) (u_1,u_2)=u_1\bff(\alpha_0)-u_2\bc'(\mu(\alpha_0)),\qquad (u_1,u_2)\in\Real^2.
\eeqn
Since $g_\bm(\mu(\alpha_0))>0$ by hypothesis, the ruling emanating from $\bc(\alpha_0)$ cannot be tangent to $\partial\cD$ at $\bc(\mu(\alpha))$. It follows that $\bff(\alpha_0)$ is not parallel to $\bc'(\mu(\alpha))$ and, thus, that $\bc'(\mu(\alpha_0))$ and $\bff(\alpha_0)$ are linearly independent. Hence, from \eqref{gradh}, the linear transformation $\nabla\hat\bh(\hat\beta(\alpha_0),\mu(\alpha_0))$ is invertible. Invoking the implicit function theorem,\footnote{See, for example, Spivak \cite[Chapter 2]{Spivak65}.} we thus infer that there are unique $C^1$ functions $\tilde\beta$ and $\tilde\mu$ such that
\beqn\label{gcond2}
\bh(\alpha,\tilde\beta(\alpha),\tilde\mu(\alpha))=\bzero.
\eeqn
for all $\alpha\in\cA$ in some neighborhood of $\alpha_0$. From the uniqueness part of the implicit function theorem and \eqref{gcond}, $\tilde\beta=\hat\beta$ and $\tilde\mu=\mu$. Thus, $\hat\beta$ and $\mu$ are $C^1$ in a neighborhood of $\alpha_0$.

To establish the remainder of the result, consider the definition of the derivative $\mu'$ at $\alpha_0$:
\beqn
\mu'(\alpha_0)=\lim_{\alpha\rightarrow\alpha_0}\frac{\mu(\alpha)-\mu(\alpha_0)}{\alpha-\alpha_0}.
\label{muprime0}
\eeqn
If $\alpha$ approaches $\alpha_0$ from the left, then, to ensure that the ruling connecting $\bc(\alpha)$ to $\bc(\mu(\alpha))$ does not cross the ruling connecting $\bc(\alpha_0)$ to $\bc(\mu(\alpha_0))$, $\mu(\alpha)$ must necessarily approach $\mu(\alpha_0)$ from the right. Thus, the the right-hand side of \eqref{muprime0} is nonpositive. Similarly, if $\alpha$ approaches $\alpha_0$ from the right, then $\mu(\alpha)$ approaches $\mu(\alpha_0)$ from the left, so the right-hand side of \eqref{muprime0} is again nonpositive. Hence, $\mu'(\alpha)\le0$. Finally, if  $g_\bm(\alpha_0)>0$, then, since $\alpha_0=\mu(\mu(\alpha_0))$, we see that $g_\bm(\mu(\mu(\alpha_0)))>0$ and, hence, that $\mu$ is differentiable at $\mu(\alpha_0)$. Differentiating
\beqn
\mu(\mu(\alpha))=\alpha\quad\text{for $\alpha\in\cA\setminus\cF$ close to $\alpha_0$}
\eeqn
with respect to $\alpha$ and evaluating at $\alpha=\alpha_0$, we find that $\mu'(\alpha_0)\ne0$ and, thus, that the strict version of the inequality \eqref{muprimele0} holds.
\end{proof}

Although the parameterization $\hat\bx$ is not one-to-one, we next show that, if restricted to a certain domain, $\hat\bx$ is two-to-one.

\begin{proposition}\label{prop2t1}
The parameterization $\hat\bx$ is two-to-one on $\cP(\cA\cap\mu^{-1}(\cA))$.
\end{proposition}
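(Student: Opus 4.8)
The plan is to exhibit, for each $\alpha\in\cA\cap\mu^{-1}(\cA)$, an explicit ``partner'' of $(\alpha,\beta)$ and then to show that these two pairs exhaust the $\hat\bx$-preimage of their common image. First I would record some elementary consequences of \eqref{defA}, \eqref{bbetarep}, and \eqref{mudef}. Fix $\alpha\in\cA\cap\mu^{-1}(\cA)$. The unique ruling $\cR$ with endpoint $\bc(\alpha)$ is the closed segment from $\bc(\alpha)$ to $\bc(\mu(\alpha))$, in the direction $\bff(\alpha)$ and of length $\hat\beta(\alpha)$; since $\mu(\alpha)\in\cA$ and $\cR$ also has $\bc(\mu(\alpha))$ as an endpoint, $\cR$ is the unique ruling with endpoint $\bc(\mu(\alpha))$, whence $\bff(\mu(\alpha))=-\bff(\alpha)$ and $\hat\beta(\mu(\alpha))=\hat\beta(\alpha)$. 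Applying \eqref{mudef} at $\mu(\alpha)$ and using that $\bc$ is injective gives $\mu(\mu(\alpha))=\alpha$, so $\mu(\alpha)\in\cA\cap\mu^{-1}(\cA)$ too; and $\mu(\alpha)\ne\alpha$, since $\mu(\alpha)=\alpha$ would force $\hat\beta(\alpha)\bff(\alpha)=\bzero$, contradicting $\hat\beta(\alpha)>0$.

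Next I would define $\sigma$ on $\cP(\cA\cap\mu^{-1}(\cA))$ by $\sigma(\alpha,\beta)=(\mu(\alpha),\hat\beta(\alpha)-\beta)$. The facts just recorded show that $\sigma$ maps $\cP(\cA\cap\mu^{-1}(\cA))$ into itself, that $\sigma\circ\sigma$ is the identity, and that $\sigma$ is fixed-point free (as $\mu(\alpha)\ne\alpha$); a direct computation using \eqref{mudef} and $\bff(\mu(\alpha))=-\bff(\alpha)$ shows $\hat\bx\circ\sigma=\hat\bx$. Hence every nonempty fiber of $\hat\bx$ contains the two distinct pairs $(\alpha,\beta)$ and $\sigma(\alpha,\beta)$, and it remains only to show that no fiber has a third element.

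For this upper bound I would argue geometrically. Suppose $(\alpha_1,\beta_1)$ and $(\alpha_2,\beta_2)$ in $\cP(\cA\cap\mu^{-1}(\cA))$ satisfy $\hat\bx(\alpha_1,\beta_1)=\hat\bx(\alpha_2,\beta_2)=:\bx$; the point $\bx$ then lies on the ruling $\cR_i$ with endpoint $\bc(\alpha_i)$, at arclength $\beta_i$ from $\bc(\alpha_i)$. If $\bx\in\cD$, then necessarily $0<\beta_i<\hat\beta(\alpha_i)$, so $\bx$ is a relative-interior point of each $\cR_i$, and, since distinct rulings cannot meet in $\cD$, $\cR_1=\cR_2$. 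If instead $\bx\in\partial\cD$, then $\bx$ cannot be a relative-interior point of a ruling, so $\beta_i\in\{0,\hat\beta(\alpha_i)\}$ and hence $\bx\in\{\bc(\alpha_i),\bc(\mu(\alpha_i))\}$; writing $\bx=\bc(\gamma)$ with $\gamma$ determined by injectivity of $\bc$ and applying $\mu$, one again obtains $\{\alpha_1,\mu(\alpha_1)\}=\{\alpha_2,\mu(\alpha_2)\}$. In either case, splitting on whether $\alpha_2=\alpha_1$ or $\alpha_2=\mu(\alpha_1)$ and solving the resulting scalar equation coming from \eqref{mudef} determines $\beta_2$ and shows $(\alpha_2,\beta_2)$ equals $(\alpha_1,\beta_1)$ or $\sigma(\alpha_1,\beta_1)$, respectively.

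The step I expect to demand the most care is the dichotomy when $\bx\in\partial\cD$: it rests on the fact that the relative interior of a reference ruling lies in the open region $\cD$, so that a point of $\partial\cD$ lying on a ruling must be one of its two endpoints, and on the fact---built into the definition \eqref{defA}---that $\cA$ avoids the finite set $\cI$ of Proposition~\ref{rulingprop}, so that each of $\bc(\alpha_i)$ and $\bc(\mu(\alpha_i))$ lies on a single ruling. Both are consequences of the structure of $\cD_c$ recalled at the start of this section, but they must be invoked explicitly to preclude a spurious third preimage.
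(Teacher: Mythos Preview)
Your argument is correct and follows essentially the same approach as the paper: exhibit the partner $(\mu(\alpha),\hat\beta(\alpha)-\beta)$ and then use that distinct rulings cannot meet in $\cD$ to bound the fiber size. Your treatment is in fact more careful than the paper's, which handles the ``at most two'' step in a single sentence; your explicit dichotomy between $\bx\in\cD$ and $\bx\in\partial\cD$, together with the observation that the relative interior of a ruling lies in $\cD$ and that membership in $\cA$ (for both $\alpha_i$ and $\mu(\alpha_i)$) forces uniqueness of the ruling through the boundary point, fills in details the paper leaves implicit.
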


\begin{proof}
If $\alpha\in\cA\cap\mu^{-1}(\cA)$, then the ruling emanating from $\bc(\alpha)$ and terminating at $\bc(\mu(\alpha))$ is identical to the ruling emanating from $\bc(\mu(\alpha))$ and terminating at $\bc(\alpha)$. Thus, 
\beqn
\hat\bc(\alpha,\beta)=\hat\bc(\mu(\alpha),\hat\beta(\alpha)-\beta),\qquad \beta\in [0,\hat\beta(\alpha)].
\eeqn
Suppose that, for some parameter pair $(\alpha',\beta')\in\cP(\cA\cap\mu^{-1}(\cA))$ distinct from $(\alpha,\beta)$, $\hat\bc(\alpha',\beta')=\hat\bc(\alpha,\beta)$.
Then, the ruling emanating from $\bc(\alpha)$ must intersect the ruling emanating from $\bc(\alpha')$, which is impossible since rulings cannot intersect in $\cD$. Since each ruling of $\cD_c$ is unique, we conclude with reference to \eqref{hx} that the claim holds.
\end{proof}

\subsection{Representation of the mean curvature}

Our next objective is to derive a formula for the mean curvature $H$ associated with the parameterization $\hat\br$, which is defined by \eqref{HandK}$_1$. The resulting expression only holds at points of $\cD_c$ where $\hat\br$ is regular. In conjunction with the results at the end of the previous subsection, this motivates us to consider a parameter set smaller than $\cA$. However, even on this smaller set, the parameterizations $\hat\bx$ and $\hat\br$ cover almost all of the curved parts of $\bar\cD$ and $\bar\cS$, respectively. 

To obtain the desired formula for $H$, begin by noticing that a normal vector $\bnu(\alpha,\beta)$ at $\hat\br(\alpha,\beta)$ is given by
\beqn\label{bnu}
\bnu(\alpha,\beta)=\bn(\alpha),\qquad (\alpha,\beta)\in\cP(\cA).
\eeqn
We may thus use the parameterization $\hat\br$ to express $H$ as
\beqn
H=\frac{\tr(I^{-1}\II)}{2},
\eeqn
where $I$ and $\II$ are the first and second fundamental forms of the portion of $\cS_c$ that is covered by $\hat\br$. Using \eqref{hr} in the conventional formulae
\beqn
I=\left(
\begin{array}{cc}
|\hat\br_\alpha|^2 & \hat\br_\alpha\cdot\hat\br_\beta \\[4pt]
\hat\br_\alpha\cdot\hat\br_\beta & |\hat\br_\beta|^2
\end{array}
\right ),\qquad
\II=\left (
\begin{array}{cc}
-\bnu_\alpha\cdot\hat\br_\alpha & -\bnu_\alpha\cdot\hat\br_\beta \\[4pt]
-\bnu_\alpha\cdot\hat\br_\beta & -\bnu_\beta\cdot\hat\br_\beta
\end{array}
\right),
\eeqn
where subscripts are used to denote partial differentiation with respect to the parameters $\alpha$ and $\beta$. Invoking \eqref{gK=0}, and taking into consideration that, by \eqref{bnu},
\beqn
\bnu_\alpha=\bn'\qquad\text{and}\qquad \bnu_\beta=\bzero,
\eeqn
we find that
\beqn\label{Hrep0}
H(\hat\bx(\alpha,\beta))=-\frac{\bn'(\alpha)\cdot(\bt(\alpha)+\beta\bg'(\alpha))}{2[|\bt(\alpha)+\beta\bg'(\alpha)|^2-(\bt(\alpha)\cdot\bg(\alpha))^2]}.
\eeqn
Furthermore, utilizing \eqref{defg}, \eqref{gcompconst}, and \eqref{g'rep2}, we find that \eqref{Hrep0} reduces to
\beqn\label{Hrep2}
H(\hat\bx(\alpha,\beta))=\frac{\kappa_n(\alpha)+\beta G(\alpha)(\tau_g(\alpha) g_\bt(\alpha)-\kappa_n(\alpha) g_\bm(\alpha))}{2(g_\bm(\alpha)-\beta G(\alpha))^2}.
\eeqn
The representation \eqref{Hrep2} for $H$ holds as long as $\alpha\not\in\cF$ and the parameterization $\hat\br$ is regular at $(\alpha,\beta)$, meaning that
\beqn\label{jacobian}
|\hat\br_\alpha(\alpha,\beta)\times\hat\br_\beta(\alpha,\beta)|=|g_\bm(\alpha)-\beta G(\alpha)|\ne0.
\eeqn
From \eqref{hx}, \eqref{parcond} and \eqref{g'rep2}, we find that
\beqn\label{jacobian2}
|\hat\bx_\alpha(\alpha,\beta)\times\hat\bx_\beta(\alpha,\beta)|=|g_\bm(\alpha)-\beta G(\alpha)|,
\eeqn
from which we infer that $\hat\bx$ is regular exactly where $\hat\br$ is regular. To determine conditions under which the requirement \eqref{jacobian} is met, we first establish a preliminary result, which stems from the stipulation that rulings cannot intersect in $\cD$.

\begin{proposition}\label{propinjcondp}
The inequality
\beqn\label{injcondp}
g_\bm(\alpha)-\hat\beta(\alpha)G(\alpha)\ge0
\eeqn
holds at each $\alpha\in\cA\setminus\cF$.
\end{proposition}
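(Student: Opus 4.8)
The plan is to argue by contradiction from the one geometric fact not yet used: two distinct reference rulings cannot intersect in $\cD$. Two cases are disposed of at once. If $G(\alpha)\le0$, then $g_\bm(\alpha)-\hat\beta(\alpha)G(\alpha)\ge g_\bm(\alpha)\ge0$ because $\hat\beta(\alpha)>0$. If instead $g_\bm(\alpha)=0$, then, since $g_\bm$ is nonnegative and $C^1$ on $\cA\setminus\cF$ and hence has vanishing derivative at such an interior minimum, \eqref{gcompconst} gives $g_\bt'(\alpha)=0$, so \eqref{defG} and \eqref{currel2} yield $G(\alpha)=\kappa_\bc(\alpha)$; moreover $g_\bm(\alpha)=0$ forces $\bff(\alpha)=\pm\bc'(\alpha)$, so the ruling at $\bc(\alpha)$ is tangent to $\partial\cD$ there, and writing $\partial\cD$ locally as a graph over that tangent line shows such a tangent segment can reach into $\cD$ only when $\kappa_\bc(\alpha)\le0$; thus $G(\alpha)\le0$ and the previous case applies. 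It therefore suffices to fix $\alpha_0$ with $g_\bm(\alpha_0)>0$ and $G(\alpha_0)>0$ and to suppose, contrary to \eqref{injcondp}, that $g_\bm(\alpha_0)-\hat\beta(\alpha_0)G(\alpha_0)<0$.

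Under this assumption the quantity $\beta\mapsto g_\bm(\alpha_0)-\beta G(\alpha_0)$, whose absolute value is the Jacobian of $\hat\bx$ along the ruling at $\alpha_0$ by \eqref{jacobian2}, is affine, equals $g_\bm(\alpha_0)>0$ at $\beta=0$, and is negative at $\beta=\hat\beta(\alpha_0)$; hence it vanishes at $\beta_0:=g_\bm(\alpha_0)/G(\alpha_0)\in(0,\hat\beta(\alpha_0))$, which places $\bx_0:=\hat\bx(\alpha_0,\beta_0)$ in the interior of that ruling, and therefore in $\cD$. Geometrically, $\bx_0$ is the characteristic (focal) point of the one-parameter family of lines $\ell_\alpha:=\{\bc(\alpha)+t\,\bff(\alpha)\mid t\in\Real\}$. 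Since $G(\alpha_0)\ne0$ makes $\bff'(\alpha_0)\ne0$, for $\alpha$ near and distinct from $\alpha_0$ the lines $\ell_\alpha$ and $\ell_{\alpha_0}$ are non-parallel and meet in a unique point $P(\alpha)$; writing $P(\alpha)=\bc(\alpha_0)+s(\alpha)\bff(\alpha_0)=\bc(\alpha)+t(\alpha)\bff(\alpha)$, a first-order expansion of these defining equations (using the analogue $\bff'=-G(g_\bm\bT-g_\bt\bM)$ of \eqref{g'rep2}) gives $s(\alpha)\to\beta_0$ and $t(\alpha)\to\beta_0$ as $\alpha\to\alpha_0$.

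It remains to show that, for $\alpha$ sufficiently close to $\alpha_0$, the point $P(\alpha)$ lies in the open segment of the ruling at $\alpha_0$ and in the open segment of the ruling at $\alpha$, so that two distinct rulings intersect in $\cD$, a contradiction. The first inclusion is immediate from $\beta_0\in(0,\hat\beta(\alpha_0))$ and $s(\alpha)\to\beta_0$. The second follows from $t(\alpha)\to\beta_0>0$ together with the lower semicontinuity of $\hat\beta$ at $\alpha_0$, namely $\liminf_{\alpha\to\alpha_0}\hat\beta(\alpha)\ge\hat\beta(\alpha_0)$: the far endpoint $\bc(\mu(\alpha))=\bc(\alpha)+\hat\beta(\alpha)\bff(\alpha)$ lies on the closed set $\partial\cD$ for every $\alpha$ and so cannot converge to an interior point of the ruling at $\alpha_0$, while the residual possibility $\hat\beta(\alpha)\to0$ would force $\bc(\mu(\alpha))-\bc(\alpha)\to\bzero$ in the direction $\bff(\alpha_0)$ with both endpoints moving along $\partial\cD$ through $\bc(\alpha_0)$, which is incompatible with $\bff(\alpha_0)\not\parallel\bc'(\alpha_0)$.

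I expect the last paragraph to be the crux: converting the analytic degeneracy of the Jacobian into an honest crossing of two nearby ruling segments, and in particular establishing the lower semicontinuity of $\hat\beta$ while accounting for parameters at which $\hat\beta$ and $\mu$ fail to be $C^1$. Proposition \ref{bbetamureg1} provides the differentiability of $\hat\beta$ and $\mu$ needed to make the limits $s(\alpha),t(\alpha)\to\beta_0$ precise, and the observation that the open part of each ruling lies in $\cD$ whereas its endpoints lie on $\partial\cD$ is exactly what upgrades ``focal point in the interior of a ruling'' to a contradiction with the non-intersection of rulings.
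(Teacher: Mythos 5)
Your proof is correct and rests on the same two ingredients as the paper's: the non-intersection of rulings in $\cD$, and the computation of where the lines through $\bc(\alpha)$ and $\bc(\alpha_0)$ in the directions $\bff(\alpha)$ and $\bff(\alpha_0)$ meet, whose limiting parameter is $g_\bm(\alpha_0)/G(\alpha_0)$. The difference is one of organization and of rigor at two spots. The paper argues directly: for each nearby $\alpha'$ the crossing parameter $\beta$ must satisfy $\beta>\hat\beta(\alpha)$ or $\beta<0$, and passing to the limit gives \eqref{injectivitycond}. You instead assume the inequality fails, identify the focal point at parameter $\beta_0=g_\bm(\alpha_0)/G(\alpha_0)\in(0,\hat\beta(\alpha_0))$, and manufacture an actual crossing of two ruling segments. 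Your version is longer but closes two small gaps in the published argument. First, to conclude that two ruling \emph{segments} (not just their supporting lines) cross, one must check that the intersection point is interior to the ruling at $\alpha$ as well as to the ruling at $\alpha_0$; your lower-semicontinuity argument for $\hat\beta$ (using that the far endpoints lie on the closed set $\partial\cD$ and that $\bff(\alpha_0)\not\parallel\bc'(\alpha_0)$ when $g_\bm(\alpha_0)>0$) supplies exactly this, whereas the paper's step ``$\beta>\hat\beta(\alpha)$ or $\beta<0$'' takes it for granted. Second, the paper's final dichotomy does not by itself dispose of the case $g_\bm(\alpha)=0$ with $G(\alpha)>0$ (there $g_\bm/G\le0$ holds trivially and yet $g_\bm-\hat\beta G<0$); your separate treatment of $g_\bm=0$ --- forcing $g_\bm'=0$ at the minimum of the nonnegative function $g_\bm$, hence $g_\bt'=0$ by \eqref{gcompconst} and $G=\kappa_\bc\le0$ from the tangency of the ruling to $\partial\cD$ --- rules this case out independently, and does so non-circularly (Proposition~\ref{hAprop} obtains the same sign only by citing the present proposition). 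Both proofs share the tacit assumption that $\cA$ accumulates at $\alpha$ well enough for $\bff'(\alpha)$ and the limits $\alpha'\to\alpha$ to make sense; that is not a defect of yours relative to the paper's.
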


\begin{proof}
Given $\alpha\in\cA\setminus\cF$, consider the alternatives $\bff'(\alpha)=\bzero$ and $\bff'(\alpha)\ne\bzero$. If $\bff'(\alpha)=\bzero$, then $G(\alpha)=0$ by \eqref{parcond}$_1$ and \eqref{g'rep2}. Thus, \eqref{injcondp} reduces to $g_\bm\ge0$, which holds by \eqref{defg}$_2$. If, instead, $\bff'(\alpha)\ne\bzero$, there exists an $\alpha'\in\cA$ belonging to a neighborhood of $\alpha$ such that $\bff(\alpha')$ is not parallel to $\bff(\alpha)$. The infinite straight lines that contain the rulings that emanate from $\bc(\alpha)$ and $\bc(\alpha')$ must therefore intersect, and the identity
\beqn
\bc(\alpha)+\beta\bff(\alpha)
=\bc(\alpha')+\beta'\bff(\alpha')
\label{cor6a}
\eeqn
must be satisfied for some choice of $\beta$ and $\beta'$. Since $\bff(\alpha')\cdot\bff'(\alpha')=0$, resolving \eqref{cor6a} in the direction of $\bff'(\alpha')$ gives
\beqn\label{prop3}
\bff'(\alpha')\cdot(\bc(\alpha')-\bc(\alpha))=\beta\bff'(\alpha')\cdot(\bff(\alpha)-\bff(\alpha')).
\eeqn
Since $\bff'(\alpha')\ne\bzero$, for $\alpha'$ in a sufficiently small neighborhood of $\alpha$, $\bff'(\alpha')\cdot(\bff(\alpha)-\bff(\alpha'))\ne0$. Hence, \eqref{prop3} is equivalent to
\beqn
\frac{\bff'(\alpha')\cdot(\bc(\alpha')-\bc(\alpha))}{\bff'(\alpha')\cdot(\bff(\alpha)-\bff(\alpha'))}=\beta.
\label{cor6b}
\eeqn
Since rulings cannot intersect in $\cD$, $\beta$ must satisfy either $\beta>\hat\beta(\alpha)$ or $\beta<0$. Thus, by \eqref{cor6b}, either
\beqn
\frac{\bff'(\alpha')\cdot(\bc(\alpha')-\bc(\alpha))}{\bff'(\alpha')\cdot(\bff(\alpha)-\bff(\alpha'))}>\hat\beta(\alpha)
\qquad\text{or}\qquad \frac{\bff'(\alpha')\cdot(\bc(\alpha')-\bc(\alpha))}{\bff'(\alpha')\cdot(\bff(\alpha)-\bff(\alpha'))}<0
\label{cor6c}
\eeqn
must hold. On passing to the limit $\alpha'\to\alpha$, \eqref{cor6c} becomes
\beqn\label{prop3.1}
-\frac{\bff'(\alpha)\cdot\bc'(\alpha)}{|\bff'(\alpha)|^2}\ge\hat\beta(\alpha)
\qquad\text{or}\qquad 
-\frac{\bff'(\alpha)\cdot\bc'(\alpha)}{|\bff'(\alpha)|^2}\le0,
\eeqn
which, by \eqref{parcond}$_{1,3}$, \eqref{defg}$_3$, and \eqref{g'repc}, is equivalent to
\beqn\label{injectivitycond}
\frac{g_\bm(\alpha)}{G(\alpha)}\ge\hat\beta(\alpha)\qquad\text{or}\qquad \frac{g_\bm(\alpha)}{G(\alpha)}\le0.
\eeqn
To explore the implications of the alternatives in \eqref{injectivitycond}, recall from \eqref{defg}$_2$ that $g_\bm(\alpha)\ge0$. If \eqref{injectivitycond}$_1$ applies, then $G(\alpha)>0$ and \eqref{injcondp} holds. If, alternatively, \eqref{injectivitycond}$_2$ applies, then $G(\alpha)<0$ and \eqref{injcondp} holds.
\end{proof}

We can now state a condition that ensures that the representation \eqref{Hrep2} for the mean curvature $H$ is valid.

\begin{corollary}\label{injcondcor}
For any fixed $\alpha\in\cA\setminus\cF$, if $g_\bm(\alpha)>0$ and $g_\bm(\mu(\alpha))>0$, then
\beqn\label{injcondc}
g_\bm(\alpha)-\beta G(\alpha)\ne0,\quad \beta\in[0,\hat\beta(\alpha)].
\eeqn
\end{corollary}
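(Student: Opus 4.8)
The plan is to exploit that, for fixed $\alpha$, the map $\beta\mapsto g_\bm(\alpha)-\beta G(\alpha)$ is affine in $\beta$, so its behavior on the closed interval $[0,\hat\beta(\alpha)]$ is controlled by its two endpoint values. At $\beta=0$ the value is $g_\bm(\alpha)$, which is positive by hypothesis. If $G(\alpha)\le 0$, then $g_\bm(\alpha)-\beta G(\alpha)\ge g_\bm(\alpha)>0$ for every $\beta\ge 0$ and the claim is immediate; so the whole argument reduces to the case $G(\alpha)>0$, in which the affine function is strictly decreasing and it suffices to prove the \emph{strict} inequality $g_\bm(\alpha)-\hat\beta(\alpha)G(\alpha)>0$. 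Proposition~\ref{propinjcondp} already supplies the weak inequality $g_\bm(\alpha)-\hat\beta(\alpha)G(\alpha)\ge 0$, so the remaining task is to rule out equality, using the second standing hypothesis $g_\bm(\mu(\alpha))>0$.

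To this end I would differentiate the defining relation \eqref{mudef}, namely $\bc(\mu(\alpha))=\bc(\alpha)+\hat\beta(\alpha)\bff(\alpha)$, with respect to $\alpha$. The hypotheses $\alpha,\mu(\alpha)\in\cA\setminus\cF$ and $g_\bm(\mu(\alpha))>0$ are precisely those of Proposition~\ref{bbetamureg1}, which ensures that $\hat\beta$ and $\mu$ are $C^1$ near $\alpha$ and---since moreover $g_\bm(\alpha)>0$---that $\mu'(\alpha)<0$ \emph{strictly}; together with $\bff\in C^1(\cA)$ this justifies the differentiation. Since the ruling issuing from $\bc(\alpha)$ and the ruling issuing from $\bc(\mu(\alpha))$ are the same segment traversed in opposite senses, one has $\hat\beta(\mu(\alpha))=\hat\beta(\alpha)$ and $\bff(\mu(\alpha))=-\bff(\alpha)$. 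Next I would resolve the differentiated identity along $\bff'(\alpha)$, simplifying with $\bff\cdot\bff'=0$, $|\bff'|=|\bg'|=|G|$ (by \eqref{parcond}$_1$, \eqref{g'repc}, \eqref{defG}), the planar analogues $\bff=g_\bt\bT+g_\bm\bM$ and $\bff'=-G(g_\bm\bT-g_\bt\bM)$ of \eqref{defg}$_3$ and \eqref{g'rep2}, the Darboux relations \eqref{DFr}, and $\bff(\mu(\alpha))=-\bff(\alpha)$. After cancelling the common factor $G(\alpha)>0$ this should collapse to the identity
\[
g_\bm(\alpha)-\hat\beta(\alpha)G(\alpha)=-\mu'(\alpha)\,g_\bm(\mu(\alpha)).
\]

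Because $\mu'(\alpha)<0$ and $g_\bm(\mu(\alpha))>0$, the right-hand side of this identity is strictly positive, giving $g_\bm(\alpha)-\hat\beta(\alpha)G(\alpha)>0$; combined with the affine/endpoint reasoning of the first paragraph, this yields \eqref{injcondc} and completes the proof. I expect the only genuine obstacle to be the sign bookkeeping in the resolution step---in particular evaluating $\bc'(\mu(\alpha))\cdot\bff'(\alpha)$ correctly by means of $\bff(\mu(\alpha))=-\bff(\alpha)$ and the orientation conventions of the Darboux frame---since the affine structure, the reduction to the case $G(\alpha)>0$, and the appeals to Propositions~\ref{propinjcondp} and~\ref{bbetamureg1} are all routine.
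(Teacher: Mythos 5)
Your proposal is correct and follows essentially the same route as the paper: both differentiate the relation defining $\mu$, annihilate the $\hat\beta'$ term by projecting onto a direction orthogonal to the ruling, and combine $\mu'(\alpha)<0$ (from Proposition~\ref{bbetamureg1}) with $g_\bm(\mu(\alpha))>0$ and the case split on the sign of $G(\alpha)$. The only difference is cosmetic --- the paper differentiates the image-space relation \eqref{M10} and crosses with $\bg(\alpha)$ to get the vector identity \eqref{muTcond1}, whereas you differentiate \eqref{mudef} in the reference plane and dot with $\bff'(\alpha)$, arriving at the (correct) signed scalar identity $g_\bm(\alpha)-\hat\beta(\alpha)G(\alpha)=-\mu'(\alpha)\,g_\bm(\mu(\alpha))$, which gives strict positivity directly.
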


\begin{proof}
Differentiating \eqref{M10} with respect to $\alpha$, taking the vector product of the resulting condition with $\bg(\alpha)$, and using \eqref{defg} and \eqref{g'rep2} yields the identity
\begin{align}\label{muTcond1}
\mu'(\alpha)\bd'(\mu(\alpha))\times\bg(\alpha)
&=(g_\bm(\alpha)-\hat\beta(\alpha)G(\alpha))\bn(\alpha).
\end{align}
Since $g_\bm(\mu(\alpha))>0$, $\bd'(\mu(\alpha))$ and $\bg(\alpha)$ are not parallel and $\mu'(\alpha)\ne0$ by Proposition~\ref{bbetamureg1}. It follows that the left-hand side of \eqref{muTcond1} cannot vanish and, thus, that $g_\bm(\alpha)-\hat\beta(\alpha)G(\alpha)\ne0$. To obtain \eqref{injcondc} for $\beta\in[0,\hat\beta(\alpha))$, it suffices to consider two alternatives depending on the sign of $G(\alpha)$. If $G(\alpha)\le0$, then \eqref{injcondc} holds since $g_\bm(\alpha)> 0$ and $\beta\ge0$. If $G(\alpha)>0$, then \eqref{injcondp} can be used to show that for $\beta<\hat\beta(\alpha)$, it follows that
\beqn
g_\bm(\alpha)-\beta G(\alpha)>g_\bm(\alpha)-\hat\beta(\alpha) G(\alpha)\ge0,
\eeqn
whereby \eqref{injcondc} holds.
\end{proof}

Although the expression \eqref{Hrep2} for the mean curvature is not valid for all $\alpha\in\cA$, the next result ensures that the set of $\alpha$ for which \eqref{Hrep2} does not hold consists of a finite number of elements.

\begin{proposition}\label{hAprop}
The set
\beqn
\{\alpha\in\cA\setminus\cF\ |\ g_\bm(\alpha)=0\}
\eeqn
has finite cardinality.
\end{proposition}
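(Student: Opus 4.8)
The plan is to argue by contradiction, in the spirit of the second part of Proposition~\ref{rulingprop}: if the set in question were infinite, the rulings attached to its points would accumulate in a way that forces two rulings to intersect inside $\cD$, which is impossible.

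First I would recast the condition geometrically. Write $\cA_0=\{\alpha\in\cA\setminus\cF\mid g_\bm(\alpha)=0\}$. By \eqref{asymcond}, $g_\bm(\alpha)=0$ is equivalent to $\kappa_n(\alpha)=0$; and since, by \eqref{defg}, $\bff(\alpha)$ is a unit vector orthogonal to $\bN$ with $\bff(\alpha)\cdot\bM(\alpha)=g_\bm(\alpha)$, the vanishing of $g_\bm(\alpha)$ is equivalent to $\bff(\alpha)=\pm\bc'(\alpha)$. Thus $\alpha\in\cA_0$ precisely when the unique ruling $R_\alpha\subseteq\cD_c$ with endpoint $\bc(\alpha)$ is tangent to $\partial\cD$ at $\bc(\alpha)$, i.e.\ is a ``grazing'' ruling. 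Now suppose $\cA_0$ is infinite. Since $\cM$ is compact, $\cA_0$ has a cluster point $\alpha^*$; because $\cF$ is finite, I may select a strictly monotone sequence $\alpha_i\in\cA_0$ with $\alpha_i\to\alpha^*$ lying in a one-sided neighbourhood of $\alpha^*$ on which $\bc$ is $C^2$, and, passing to a further subsequence, I may assume all the grazing rulings $R_i:=R_{\alpha_i}$ share the same orientation, say $\bff(\alpha_i)=\bc'(\alpha_i)$ (the opposite orientation being handled by reversing the parametrization of $\bc$). Each $R_i$ is a segment of positive length $\hat\beta(\alpha_i)$ whose interior lies in the open set $\cD$. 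A short computation using the structure equations \eqref{DFr} shows that tangency of $R_i$ to $\partial\cD$ at $\bc(\alpha_i)$, combined with $R_i\subseteq\bar\cD$, forces $\kappa_\bc(\alpha_i)\le 0$.

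The crux is to derive a contradiction from the fact that the $R_i$ accumulate near $\bc(\alpha^*)$ while no two rulings may meet in $\cD$. Here I would compare consecutive grazing rulings $R_i$ and $R_{i+1}$ for large $i$: since the interior of $R_i$ lies in $\cD$ while $\bc(\alpha_{i+1})\in\partial\cD$, the short boundary arc $\bc([\alpha_i,\alpha_{i+1}])$ cannot cross the interior of $R_i$; this, together with the continuity of the ruling directions along $\cD_c$ and the sign constraint $\kappa_\bc(\alpha_i)\le0$, pins the location of $R_{i+1}$ — and of the (possibly non-grazing) rulings attached to parameters in $(\alpha_i,\alpha_{i+1})$ — tightly enough relative to $R_i$ that, unless $\bc$ restricts to a straight segment on $[\alpha_i,\alpha_{i+1}]$, one of these neighbouring rulings must cross $R_i$ inside $\cD$. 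Since $\kappa_\bc$ vanishes on only finitely many intervals, the exceptional straight case can occur for only finitely many indices $i$; the remaining degenerate possibility, in which two of the $R_i$ become collinear and meet on $\partial\cD$, is ruled out by Proposition~\ref{rulingprop} (together with the observation that a ruling collinear with a straight sub-arc of $\partial\cD$ would have its interior on $\partial\cD$ rather than in $\cD$). Hence for $i$ large we reach a contradiction, and $\cA_0$ must be finite.

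I expect the main obstacle to be making this local picture near $\alpha^*$ fully rigorous. Two competing degeneracies must be tracked: the grazing rulings may be \emph{flattening} against $\partial\cD$ (so their directions converge to $\bc'(\alpha^*)$) and may simultaneously be \emph{shrinking} (if $\hat\beta(\alpha_i)\to 0$), and one must control the interplay between the $R_i$ and the way the non-grazing rulings of $\cD_c$ foliate a neighbourhood of $\bc(\alpha^*)$. A clean way to organize this may be to work instead with the endpoint map $\mu$ of Proposition~\ref{bbetamureg1}: the chords $[\bc(\alpha_i),\bc(\mu(\alpha_i))]$ form a non-crossing family, and monotonicity of $\mu$ where it applies, combined with the tangency at $\bc(\alpha_i)$, should yield the same contradiction while keeping the case analysis on orientations manageable.
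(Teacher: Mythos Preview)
Your approach differs substantially from the paper's, and the geometric crossing argument has a real gap that you partly anticipate. The paper's proof is short and analytic: at any $\alpha\in\cA\setminus\cF$ with $g_\bm(\alpha)=0$ one has $g_\bm'(\alpha)=0$, so $G(\alpha)=\kappa_\bc(\alpha)$ by \eqref{defG}, and Proposition~\ref{propinjcondp} forces $\kappa_\bc(\alpha)\le 0$. If $\kappa_\bc(\alpha)<0$ the parametrization \eqref{hx} is regular for $\beta>0$, so \eqref{Hrep2} applies; but \eqref{asymcond} gives $\kappa_n(\alpha)=0$, whence \eqref{conSid2} gives $\tau_g(\alpha)=0$, and \eqref{Hrep2} collapses to $H=0$ along the entire ruling---impossible since that ruling lies in $\cD_c$. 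Hence necessarily $\kappa_\bc(\alpha)=0$, and the standing hypothesis that $\kappa_\bc$ vanishes on only finitely many subarcs (together with the observation that such an $\alpha$ cannot lie in the interior of a straight subarc, by the definition \eqref{defA} of $\cA$) finishes the argument.

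The specific gap in your sketch is the claim that, on a non-straight arc, the rulings near a grazing ruling $R_i$ must cross $R_i$ inside $\cD$. This does not follow from non-crossing alone. Take $\partial\cD$ locally to be the graph $y=x^2$ with $\cD$ below (so $\kappa_\bc<0$ there), and assign to \emph{every} boundary point its forward tangent as ruling direction. These half-tangents foliate the region below the parabola injectively: through any $(x_0,y_0)$ with $y_0<x_0^2$ there passes exactly one forward tangent, namely the one issued from $a=x_0-\sqrt{x_0^2-y_0}$. Thus no two such rulings meet in $\cD$, the ruling field is $C^1$, $g_\bm\equiv 0$ on the whole arc, and the arc is strictly curved---so your dichotomy ``crossing unless straight'' fails. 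What actually excludes this configuration is not a crossing but precisely the curvature identity the paper invokes: $g_\bm=0$ forces $\kappa_n=\tau_g=0$ via \eqref{asymcond} and \eqref{conSid2}, hence $H=0$ on the swept region, contradicting membership in $\cD_c$. Your argument never calls on $H\ne 0$, and the proposed rescue via $\mu$ does not help either, since Proposition~\ref{bbetamureg1} requires $g_\bm(\mu(\alpha))>0$ and gives no control at the grazing endpoints themselves.
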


\begin{proof}
Choose $\alpha\in\cA\setminus\cF$ so that $g_\bm(\alpha)=0$. Then, by the lower bound \eqref{defg}$_2$, $g_\bm'(\alpha)=0$. Thus, by  \eqref{defG} and Proposition~\ref{propinjcondp},
\beqn
G(\alpha)=\kappa_g(\alpha)=\kappa_\bc(\alpha)\le0.
\eeqn
Moreover, \eqref{Hrep2} specializes to
\beqn
H(\hat\bx(\alpha,\beta))
=\frac{\kappa_n(\alpha)+\beta\kappa_\bc(\alpha)\tau_g(\alpha)}{2\beta^2\kappa^2_\bc(\alpha)}
\label{prop7a}
\eeqn
and is valid for each choice of $\beta$. Suppose that $\kappa_\bc(\alpha)<0$. Then, by \eqref{conSid2} and \eqref{asymcond}, \eqref{prop7a} reduces to $H(\hat\bx(\alpha,\beta))=0$, which is not possible since the image of $\hat\bx$ is contained in $\cD_c$, on which $H$ cannot vanish. Next, suppose that $\kappa_\bc(\alpha)=0$. Recall that the signed curvature $\kappa_\bc$ of $\partial\cD$ can vanish on only a finite number of intervals. Then, since $\kappa_\bc=\kappa_g$, it follows that $\kappa_g$ can only vanish on a finite number of intervals. Considering the definition \eqref{defA} of $\cA$, and recalling that $g_\bm(\alpha)=0$, $\alpha$ cannot be interior to any of these intervals. Since there are only a finite number of intervals where $\kappa_g$ vanishes, it follows that $g_\bm$ can vanish at only a finite number of points.
\end{proof}

Propositions \ref{prop2t1} and \ref{injcondcor} motivate us to consider the subset of $\cA\setminus\cF$ defined according to
\beqn
\hat\cA=\{\alpha\in\cA\setminus\cF\mskip4mu|\mskip4mu \mu(\alpha)\not\in\cF,\ g_\bm(\alpha)\ne0, g_\bm(\mu(\alpha))\ne0\}.
\label{Ahat}
\eeqn
Since $\cF$ has finite cardinality, we see from Proposition~\ref{hAprop} that $\hat\cA$ and $\cA$ differ by a set with a finite number of elements. Since $g_\bm>0$ on $\hat\cA$, we can use \eqref{K=0} to infer that 
\beqn\label{K=0con}
\tau_g=-\frac{\kappa_ng_\bt}{g_\bm}.
\eeqn
Using \eqref{K=0con}, the representation \eqref{Hrep2} of $H$ can be expressed as
\beqn\label{Hrep4}
H(\hat\bx(\alpha,\beta))=\frac{\kappa_n(\alpha)}{2g_\bm(\alpha)(g_\bm(\alpha)-\beta G(\alpha))}.
\eeqn
It should be emphasized that \eqref{Hrep4} is only valid on $\hat\cA$ rather than on all of $\cA$.

We conclude this subsection by showing that the restriction of $\hat\bx$ to $\cP(\hat\cA)$ covers almost all of $\cD_c$.

\begin{proposition}\label{measurezero}
The set $\hat{\cD}_c=\hat\bx(\cP(\hat\cA))\subseteq\cD_c$ differs from $\cD_c$ by a set of zero areal measure.
\end{proposition}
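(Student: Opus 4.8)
The plan is to show that the rulings that are \emph{not} entirely contained in $\hat{\cD}_c$ all have at least one endpoint among finitely many distinguished points of $\partial\cD$, and that the union of all rulings through any single boundary point is contained in a line and is therefore negligible.

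I would begin with the elementary observation that if $\alpha\in\hat\cA$ then the entire closed ruling $R$ with endpoint $\bc(\alpha)$ lies in $\hat{\cD}_c$. Indeed, since $\hat\cA\subseteq\cA$, the map $\beta\mapsto\hat\bx(\alpha,\beta)=\bc(\alpha)+\beta\bff(\alpha)$, for $\beta\in[0,\hat\beta(\alpha)]$, sweeps out exactly the unique ruling emanating from $\bc(\alpha)$, namely $R$; as $(\alpha,\beta)$ runs over $\cP(\hat\cA)$ this yields $R\subseteq\hat\bx(\cP(\hat\cA))=\hat{\cD}_c$. Equivalently, if a ruling $R$ is not contained in $\hat{\cD}_c$, then neither of the two parameters $\alpha_1,\alpha_2\in\cM$ for which $\bc(\alpha_1)$ and $\bc(\alpha_2)$ are the endpoints of $R$ can belong to $\hat\cA$. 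Since every point of $\cD_c$ lies on some ruling, $\cD_c\setminus\hat{\cD}_c$ is covered by the union of those rulings that are not contained in $\hat{\cD}_c$.

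Next I would confine the endpoints of such rulings to a finite set. Fix a ruling $R\not\subseteq\hat{\cD}_c$ with endpoint parameters $\alpha_1,\alpha_2$, and fix $i\in\{1,2\}$. If $\alpha_i\notin\cA$, then by the definition \eqref{defA} of $\cA$ there is no unique ruling with endpoint $\bc(\alpha_i)$; since $R$ is such a ruling, at least two rulings must meet at $\bc(\alpha_i)$, so $\bc(\alpha_i)$ belongs to the finite set $\cI$ of Proposition~\ref{rulingprop}. If instead $\alpha_i\in\cA$, then since $\alpha_i\notin\hat\cA$ the definition \eqref{Ahat} places $\alpha_i$ in $\cA\setminus\hat\cA$, which is finite --- as already observed, this follows from Proposition~\ref{hAprop} together with the finiteness of $\cF$. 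Hence, setting $F=\bc^{-1}(\cI)\cup(\cA\setminus\hat\cA)$, a finite subset of $\cM$, every ruling not contained in $\hat{\cD}_c$ has at least one endpoint among the finitely many points $\{\bc(\alpha):\alpha\in F\}$.

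Finally I would invoke the first part of Proposition~\ref{rulingprop}. For each $\alpha\in F$, any two rulings with endpoint $\bc(\alpha)$ meet at $\bc(\alpha)\in\partial\cD$ and are therefore collinear; since a segment determines its supporting line, all rulings with endpoint $\bc(\alpha)$ lie on one common line $L_\alpha$ through $\bc(\alpha)$, so their union has zero areal measure. Therefore
\beqn
\cD_c\setminus\hat{\cD}_c\ \subseteq\ \bigcup_{\alpha\in F}\bigcup\{\,R\ |\ R\text{ is a ruling with endpoint }\bc(\alpha)\,\}\ \subseteq\ \bigcup_{\alpha\in F}L_\alpha,
\eeqn
which, being a finite union of subsets of lines, has zero areal measure; since $\hat{\cD}_c\subseteq\cD_c$ by \eqref{parc}, the proposition follows. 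The only real obstacle is the bookkeeping in the middle step: one must be sure that every parameter capable of serving as a ruling endpoint yet lying outside $\hat\cA$ is caught by the finite set $F$, which rests on the finiteness of $\cA\setminus\hat\cA$ (hence on Proposition~\ref{hAprop} and on the behavior of $\mu$ near corners and straight subarcs). Once this is in hand, the collinearity assertion of Proposition~\ref{rulingprop} immediately forces the leftover set into finitely many lines.
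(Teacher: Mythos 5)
Your proposal is correct and follows essentially the same route as the paper's (much terser) proof: both reduce the discrepancy $\cD_c\setminus\hat\cD_c$ to the rulings whose endpoint parameters fall outside $\hat\cA$, invoke Proposition~\ref{hAprop} and the finiteness of $\cF$ and of $\cI$ from Proposition~\ref{rulingprop} to confine those endpoints to a finite set, and use the collinearity statement of Proposition~\ref{rulingprop} to trap the leftover rulings in finitely many lines. Your write-up simply makes explicit the bookkeeping (the set $F=\bc^{-1}(\cI)\cup(\cA\setminus\hat\cA)$ and the lines $L_\alpha$) that the paper leaves implicit.
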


\begin{proof}
Points in $\cD_c\setminus\hat{\cD}_c$ lie on rulings that are tangent to $\partial\cD$ or have an endpoint at which the unit tangent $\bt$ to $\partial\cD$ is discontinuous. Since $\bar\cD$ has at most a finite number of corners, it follows from Propositions~\ref{rulingprop} and \ref{hAprop} that the area measure of $\cD_c\setminus\hat\cD_c$ is zero.
\end{proof}

As a corollary of Proposition~\ref{measurezero}, it can be shown that $\hat\br(\cP(\hat\cA))$ differs from $\cS_c$ by a set of zero areal measure.

\subsection{Centrality of the framed boundary of the image surface} \label{Edn}

We have seen that the curved part of $\cS$ is characterized, up to a set of zero areal measure, by the parameterizations $\hat\bx$ and $\hat\br$ on $\cP(\hat\cA)$. These parameterizations are determined by four functions of the arclength $\alpha$: $\bc$, $\bff$, $\bd$, and $\bg$. The arclength parametrization $\bc$ of $\partial\cD$ is determined solely by the shape of $\cD$. Consequently, it is independent of the isometric immersion $\bchi$ and, thus, the other functions. However, these remaining three functions are not independent, as they must fulfill the isometry constraints \eqref{parcond}, along with other conditions. We next focus on expressing $\bff$ and $\bg$ in terms of $\bd$ and the unit normal $\bn$ to $\cS$ along $\partial\cS$.

Associated with the boundary $\partial\cD$ of $\cD$ is the Darboux frame $\{\bT,\bM,\bN\}$. Together with the frame $\{\bt,\bm,\bn\}$ determined by $\bd$ and $\bn$, we find from \eqref{parcond}$_2$ and \eqref{defg} that
\beqn\label{deffp0}
\bff=(\bg\cdot\bd')\bT+\sqrt{1-(\bg\cdot\bd')^2}\mskip2mu\bM=g_\bt\bT+g_\bm\bM.
\eeqn
It is thus evident that $\bff$ and $\bg$ are not independent; instead, they are determined reciprocally.
 
Focusing on the set $\hat\cA$ defined in \eqref{Ahat}, we see from \eqref{K=0con} that
\beqn\label{n'rep}
\bn'=-\kappa_n\Big(\bt- \frac{g_\bt}{g_\bm}\bm\Big)
\eeqn
and, thus, invoking \eqref{defg}, that
\beqn\label{grepm}
\bn'\times\bn=\frac{\kappa_n}{g_\bm}\bg.
\eeqn
From \eqref{grepm}, we find that
\beqn\label{defgp0}
\bg=\sgn(\kappa_n)\frac{\bn'\times\bn}{|\bn'\times\bn|}.
\eeqn
Although $\bn'$ may not be $C^1$, it is evident from \eqref{defgp0} that the ratio $\bn'\times\bn/|\bn'\times\bn|$ is $C^1$. Moreover, since $\bn'\times\bn/|\bn'\times\bn|$ is $C^1$ and $\bn'$ is orthogonal to $\bn$, $\bn'/|\bn'|$ is also $C^1$.

After expressing $\bg$, and subsequently $\bff$, in terms of $\bd$ and $\bn$, \eqref{bbetarep} can be used to determine $\hat\beta$ and \eqref{mudef} can be used to determine $\mu$. Additionally, as we next demonstrate, the set $\hat\cA$ defined in \eqref{Ahat} can be characterized completely by $\bd$ and $\bn$.

\begin{proposition}
The set $\hat\cA$ upon which the representation \eqref{Hrep4} of the mean curvature $H$ holds is given by
\beqn\label{althA}
\hat\cA=\{\alpha\in\cM\setminus\cF\mskip4mu|\mskip4mu \mu(\alpha)\not\in\cF,\ \kappa_n(\alpha)\ne0, \kappa_n(\mu(\alpha))\ne0\}.
\eeqn
\end{proposition}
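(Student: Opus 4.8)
The plan is to establish the two inclusions between the set $\hat\cA$ of \eqref{Ahat} and the set $\cB$ on the right-hand side of \eqref{althA} separately. One inclusion is immediate from the work already done: if $\alpha\in\hat\cA$, then $\alpha\in\cA\setminus\cF\subseteq\cM\setminus\cF$, $\mu(\alpha)\notin\cF$, and the very appearance of $g_\bm(\mu(\alpha))$ forces $\mu(\alpha)\in\cA$; applying the equivalence \eqref{asymcond} at $\alpha$ and at $\mu(\alpha)$ then turns $g_\bm(\alpha)\ne0$ and $g_\bm(\mu(\alpha))\ne0$ into $\kappa_n(\alpha)\ne0$ and $\kappa_n(\mu(\alpha))\ne0$, so $\alpha\in\cB$. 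The content of the proposition is therefore the reverse inclusion, whose crux is the following claim: if $\alpha\in\cM\setminus\cF$ and $\kappa_n(\alpha)\ne0$, then $\alpha\in\cA$, that is, there is a \emph{unique} ruling of $\cD_c$ with endpoint $\bc(\alpha)$.

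To prove this claim I would treat existence and uniqueness of the ruling separately. For existence, note that $\kappa_n(\alpha)\ne0$ forces $\bn'(\alpha)\ne\bzero$ by the structure equation \eqref{DFs}$_3$, so the unit normal to $\cS$ is non-constant along $\partial\cS$ near $\bd(\alpha)$ and $\bchi$ cannot be rigid on any neighborhood of $\bc(\alpha)$; by Massey's decomposition $\bc(\alpha)$ then lies in the curved set $\cD_c$, and since $\bc(\alpha)\in\partial\cD$ it cannot be interior to a ruling and so must be an endpoint of at least one. For uniqueness, I would first show that any ruling with endpoint $\bc(\alpha)$ is transverse to $\partial\cD$ there: were it tangent, the corresponding image ruling --- a straight asymptotic curve, hence of vanishing normal curvature --- would be tangent to $\partial\cS$ at $\bd(\alpha)$ in the direction $\bt(\alpha)$, and Meusnier's theorem would give $\kappa_n(\alpha)=0$, a contradiction. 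Now if two rulings had endpoint $\bc(\alpha)$, Proposition~\ref{rulingprop} makes them collinear, hence both transverse and lying on a common line $\ell$ through $\bc(\alpha)$; because $\alpha\notin\cF$, $\partial\cD$ is $C^2$ near $\bc(\alpha)$, and writing $\partial\cD$ locally as a graph over its tangent line shows that exactly one of the two rays of $\ell$ issuing from $\bc(\alpha)$ enters $\cD$. As a ruling has positive length and must lie in $\bar\cD$, both rulings lie on that single ray; since distinct rulings can share at most an endpoint they cannot be nested, so the two coincide and $\alpha\in\cA$.

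Granting the claim, the reverse inclusion follows: for $\alpha\in\cB$ the claim at $\alpha$ gives $\alpha\in\cA$ (so that $\mu(\alpha)$ is defined), and the claim at $\mu(\alpha)$, using $\mu(\alpha)\notin\cF$ and $\kappa_n(\mu(\alpha))\ne0$, gives $\mu(\alpha)\in\cA$; then \eqref{asymcond} at $\alpha$ and at $\mu(\alpha)$ yields $g_\bm(\alpha)\ne0$ and $g_\bm(\mu(\alpha))\ne0$, so $\alpha\in\hat\cA$. I expect the main obstacle to be the uniqueness step, and within it the transversality/one-sidedness argument: $\partial\cD$ need not lie locally on one side of its tangent line (for instance at an inflection), so the conclusion that exactly one ray of $\ell$ enters $\cD$ must be extracted from the $C^2$ regularity at non-corner points and a second-order comparison rather than from any convexity. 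A secondary, bookkeeping point is the logical ordering: the right-hand side of \eqref{althA} only makes literal sense once $\mu(\alpha)$ is defined, so $\alpha\in\cA$ has to be secured before the conditions involving $\mu(\alpha)$ are invoked.
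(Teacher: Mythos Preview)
Your proposal is correct and follows essentially the same approach as the paper's proof, which also treats the two inclusions via \eqref{asymcond} and, for the reverse one, argues existence of a ruling (the paper uses \eqref{conSid2} to get $H(\bc(\alpha))\ne0$ directly rather than going through $\bn'\ne\bzero$ and Massey), transversality via the asymptotic-curve observation, and uniqueness via Proposition~\ref{rulingprop}. Your stated worry about the one-ray step is unwarranted: once transversality of the ruling line to $\partial\cD$ at $\bc(\alpha)$ is established, the fact that exactly one ray enters $\cD$ at a smooth boundary point is a first-order statement (the ray with positive $\bM$-component enters), so inflection points of $\partial\cD$ play no role and no second-order comparison is needed.
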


\begin{proof}
Denote the set on the right-hand side of \eqref{althA} by $\cB$. Suppose that $\alpha\in\hat\cA$, so that $g_\bm(\alpha)\ne0$ and $g_\bm(\mu(\alpha))\ne0$. It then follows from \eqref{asymcond} that $\alpha\in\cB$. Suppose, alternatively, that $\alpha\in\cB$. Since the Gaussian curvature of $\bar\cS$ vanishes, \eqref{conSid2} holds, and, since $\kappa_n(\alpha)\ne0$, it follows that $H(\bc(\alpha))\ne0$. Hence, there is a reference ruling that has $\bc(\alpha)$ as an endpoint. Any such ruling cannot be tangent to $\partial\cD$ at $\bc(\alpha)$ since $\kappa_n(\alpha)\ne0$. Since rulings can only meet on $\partial\cD$ if they are on the same line and, hence, tangent to $\partial\cD$, there is a unique ruling emanating from $\bc(\alpha)$ and it, thus, follows that $\alpha\in\cA$ and $g_\bm(\alpha)\ne0$. This ensures that $\mu(\alpha)$ is well-defined. Moreover, the same reasoning applies at $\mu(\alpha)$, so we must also have $g_\bm(\mu(\alpha))\ne0$. Hence, $\alpha\in\hat\cA$.
\end{proof}

From the findings presented in this subsection, it appears that the curved part of $\cS$, given a reference region $\cD$, is entirely determined by the curve parametrized by $\bd$ and the unit normal $\bn$ of the associated Darboux frame. In Section~\ref{constsur}, we will demonstrate that such a framed curve determines the isometric immersion $\bchi$ on the entirety of $\bar\cD$, rather than on $\hat\cD_c$ alone. However, a framed curve need not correspond to an isometric immersion of $\bar\cD$; to do so it must satisfy several conditions, one being that an associated measure of bending energy must be finite. To articulate this prerequisite, it is essential to derive an expression for the energy stored in bending $\bar\cD$ to $\bar\cS$ directly in terms of $\bd$, $\bn$, and related quantities. 


\section{Reduction of the bending energy to a line integral}\label{sectrbe}

The (dimensionless) bending energy associated with the isometric immersion $\bchi$ is given by
\beqn\label{BendEnergy}
E=2\int_{\bar\cD}H^2\da=2\int_{\cD_c}H^2\da,
\eeqn
where we have relied on the understanding that $H=0$ on $\bar\cD\setminus\cD_c$. Notice that $E$ is finite since $\bchi$ is $C^2$ and, hence, $H$ is bounded on $\bar\cD$. We next show that $E$ can be reduced to an integral over $\cM$. 

Utilizing Propositions~\ref{prop2t1} and \ref{measurezero}, the expression \eqref{jacobian2} for the Jacobian of the parametrization $\hat\bx$, and the representation \eqref{Hrep4} of the mean curvature $H$, we find that  \eqref{BendEnergy} can be reduced to an integral over $\hat\cA$. Toward this objective, we first observe that applying the area formula\footnote{See, for example, Ambrosio, Fusco, and Pallara \cite[\S2.10]{AFP00}.} and keeping in mind that $\hat\bx$ is two-to-one yields
\begin{align}
E&=2\int_{\hat\bx(\cP(\hat\cA))} H^2\da
\notag\\[4pt]
&=\iint_{\cP(\hat\cA)}H^2(\hat\bx(\alpha,\beta))|\hat\bx_\alpha(\alpha,\beta)\times\hat\bx_\beta(\alpha,\beta)|\dbeta\dalpha
\notag\\[4pt]
&=\frac{1}{4}\int_{\hat\cA}\Big(\int_0^{\hat\beta(\alpha)}\frac{\kappa^2_n(\alpha)\dbeta}{g^2_\bm(\alpha)(g_\bm(\alpha)-\beta G(\alpha))}\Big)\dalpha.
\label{derGnz0}
\end{align}
Given $\alpha\in\hat\cA$, the value of the inner integral in \eqref{derGnz0} depends on whether $G(\alpha)\ne0$ or $G(\alpha)=0$:
\beqn\label{intGcases}
\int_0^{\hat\beta(\alpha)}\frac{\kappa^2_n(\alpha)\dbeta}{g^2_\bm(\alpha)(g_\bm(\alpha)-\beta G(\alpha))}=
\begin{cases}
\displaystyle
\frac{\kappa^2_n(\alpha)}{G(\alpha)g^2_\bm(\alpha)}\ln\frac{g_\bm(\alpha)}{ g_\bm(\alpha)-\hat\beta(\alpha) G(\alpha)}, & G(\alpha)\ne0, 
\\[16pt]
\displaystyle
\frac{\hat\beta(\alpha)\kappa^2_n(\alpha)}{g^3_\bm(\alpha)}, & G(\alpha)=0.
\end{cases}
\eeqn
Since
\beqn\label{Gez}
\lim_{G\rightarrow 0}\frac{\kappa_n^2}{Gg_\bm^2}\ln\frac{g_\bm}{ g_\bm-\hat\beta G}=\frac{\hat\beta\kappa_n^2}{g_\bm^3},
\eeqn
we see that the integral \eqref{intGcases} depends continuously on $G$. With this understanding, we conclude that \eqref{BendEnergy} admits a dimensionally reduced representation of the form
\beqn\label{derGnz}
E=\frac{1}{4}\int_{\hat\cA} \frac{\kappa_n(\alpha)^2}{G(\alpha)g_\bm(\alpha)^2}\ln\frac{g_\bm(\alpha)}{g_\bm(\alpha)-\hat\beta(\alpha) G(\alpha)}\dalpha.
\eeqn
In writing \eqref{derGnz}, we emphasize that the limiting expression \eqref{Gez} should be used when $G(\alpha)=0$.

The derivation of this dimensionally reduced bending energy rests on findings presented in Section~\ref{sectgeo}, which lead to the conclusion that there are a finite number of values $\alpha$ for which there is a ruling tangent to $\partial\cD$ at $\bc(\alpha)$ or $\bc(\mu(\alpha))$. Moreover, the values of $\alpha$ for which $\bn'(\alpha)=\bzero$ should not be integrated over, as those values correspond to the flat portions of $\cS$. Thus, $E$ can be expressed as
\beqn\label{RedBEmod}
E=\int_\cM \phi \dl,
\eeqn
where $\phi$ is defined such that
\beqn\label{RedED}
\phi(\alpha)=
\begin{cases}
\displaystyle
\frac{\omega\kappa_n(\alpha)^2}{4G(\alpha)g_\bm(\alpha)^2}\ln\frac{g_\bm(\alpha)}{g_\bm(\alpha)-\hat\beta(\alpha) G(\alpha)}, & \alpha\in\hat\cA,
\\[14pt]
0, & \bn'(\alpha)=\bzero,
\\[10pt]
\infty, & \alpha\not\in\hat\cA \ \text{and}\ \bn'(\alpha)\ne\bzero.
\end{cases}
\eeqn
Consider an $\alpha\in\cM$ such that the third alternative for $\phi$ is realized. Recalling \eqref{althA}, we see that  $\bn'(\alpha)\ne\bzero$ and at least one of the following conditions holds: $\kappa_n(\alpha)=0$, $\kappa_n(\mu(\alpha))=0$, $\alpha\in\cF$, or $\mu(\alpha)\in\cF$. It follows from the fact that $\cF$ is has finite cardinality and from Proposition~\ref{hAprop} that this is possible only for a finite number of $\alpha$. Thus, setting $\phi$ to be infinite for such $\alpha$ does not affect the value of the dimensionally reduced expression \eqref{RedBEmod} for $E$. 

Although \eqref{BendEnergy} depends on the function $\bchi$ as defined in \eqref{RedED}, from the discussion in Subsection~\ref{Edn}, \eqref{RedED} is completely determined by $\bd$ and $\bn$.


\section{Construction of an isometric immersion from a framed curve}
\label{constsur}

Hereafter, we focus on demonstrating that, under specific conditions, a framed curve can be used to construct an isometric immersion from $\bar\cD$. Not any framed curve can be used for such a construction. The results in Section~\ref{sectgeo} and Section~\ref{sectrbe} motivate what conditions the framed curve must satisfy to carry out the construction.

For the rest of this section, we assume that the reference region $\cD$ is fixed and that it satisfies the assumptions stipulated at the beginning of Section~\ref{sectgeo}. Recall that $\cM=[0,\ell]$ is endowed with a periodic topology and that $\bc:\cM\rightarrow\partial\cD$ is an arclength parameterization of the boundary $\partial\cD$ of $\cD$. Recall, also, that $\bc$ is piecewise $C^2$ and that the finite number of points at which $\bc$ is not differentiable is denoted by $\cF$. In what follows, there is no given isometric immersion under consideration.

Utilizing the notation from \eqref{DFr}, we next provide a definition of a framed curve with singular set $\cF$, while ensuring the necessary regularity.

\begin{definition}\label{fcss}
A framed curve, with singular set $\cF$, consists of a pair of functions $(\bd,\bn)$ defined such that $\bd:\cM\rightarrow\cE$ parameterizes a closed curve that is piecewise $C^2$ with $\cF$ being the set of points where $\bc'$ and $\bc''$ fail to exist and $\bn:\cM\rightarrow\cV$ is a continuous unit-vector function that is $C^1$ off of $\cF$ satisfying $\bd'\cdot\bn=0$ on $\cM\setminus\cF$, with $\bn'/|\bn'|$ being $C^1$ wherever it is defined.
\end{definition}

Given a framed curve $(\bd,\bn)$, we introduce $\bt=\bd'$ and $\bm=\bn\times\bt$. Then, $\{\bt,\bm,\bn\}$ is an orthonormal frame associated with the curve parameterized by $\bd$ and the notational conventions established in \eqref{DFs} can be employed without confusion. Introducing the set
\beqn
\tilde\cA=\{\alpha\in\cM\setminus\cF\ |\ \kappa_n(\alpha)\ne0\},
\label{cAdef}
\eeqn
we define $\bg:\tilde\cA\to\cV$ and $\bff:\tilde\cA\to\cV$ through
\beqn
\bg=\sgn(\kappa_n)\frac{\bn'\times\bn}{|\bn'|}
\label{defgp}
\eeqn
and
\beqn
\bff=(\bg\cdot\bd') \bT+\sqrt{1-(\bg\cdot\bd')^2}\mskip2mu\bM.
\label{deffp}
\eeqn
Moreover, define $\hat\beta:\tilde\cA\rightarrow\Real$ by
\beqn\label{bbetarep4}
\hat\beta(\alpha)=\min\{ |\bc(\alpha')-\bc(\alpha)|\ | \ \alpha'\in\cM\ \text{and}\ \bc(\alpha')=\bc(\alpha)+\beta\bff(\alpha)\ \text{for}\ \beta>0\}.
\eeqn
and $\mu:\tilde\cA\rightarrow\cA$ implicitly by
\beqn\label{mudef4}
\bc(\mu(\alpha))=\bc(\alpha)+\hat\beta(\alpha)\bff(\alpha).
\eeqn
Since $\bg\cdot\bd'\ne1$ on $\tilde\cA$, we see from \eqref{deffp} that $\bff$ can never be tangent to $\partial\cD$ and points into $\cD$. This ensures that $\hat\beta$ and $\mu$ are well-defined.

Since $\bg$ is orthogonal to $\bn$, it has a representation, relative to $\{\bt,\bm,\bn\}$, of the form
\beqn\label{defgg}
\bg=g_\bt\bt+g_\bm\bm.
\eeqn
Moreover, since $|\bg|=1$ and since $\kappa_n\ne0$ on $\tilde\cA$, we infer that $g_\bt$ and $g_\bm$ must satisfy
\beqn\label{defgg2}
g_\bt^2+g_\bm^2=1
\qquad\text{and}\qquad 
g_\bm>0.
\eeqn
Differentiating $\bg$ and using the notational conventions introduced in \eqref{DFs} and \eqref{defG} we find that $\bg':\tilde\cA\to\cV$ admits a representation of the form
\beqn
\bg'=-G(g_\bm\bt-g_\bt\bm)+(g_\bg\kappa_n+g_\bm\tau_g)\bn,
\label{g'repp0}
\eeqn
Recognizing from \eqref{defgp} that $\bg'\cdot\bn=0$, we find that \eqref{g'repp0} reduces to
\beqn\label{g'repp}
\bg'=-G(g_\bm\bt-g_\bt\bm).
\eeqn

We are now prepared to introduce the subclass of framed curves that are suitable for generating isometric immersions.

\begin{definition}\label{admissfc}
A framed curve $(\bd,\bn)$ is \emph{admissible} if:
\begin{itemize}
\item \emph{(compatibility with $\partial\cD$)} $\bd$ and $\bn$ satisfy\footnote{See the notation introduced in \eqref{DFr} and \eqref{DFs}.}
\begin{align}
\kappa_g=\kappa_\bc&\quad \text{on}\quad \cM\setminus\cF 
\qquad\text{and}\qquad \theta[\bc,\bN]=\theta[\bd,\bn]
\quad \text{on}\ \cF,\label{cdncond} 
\\[4pt]
&\kappa_n=0\quad\text{implies that}\quad\bn'={\rm\bf0}\quad\text{on}\quad \cM\setminus\cF;
\label{cdncond2}
\end{align}

\item \emph{(properties of $\mu$)} the following conditions involving $\mu$ hold:
\begin{enumerate}[label=\textbf{\textit{M\arabic*}}]

\item \label{M1} $\bd(\mu(\alpha))=\bd(\alpha)+\hat\beta(\alpha)\bg(\alpha)$ for all $\alpha\in\tilde\cA$,

\item \label{M2} For all $\alpha_1\in\tilde\cA$ and $\alpha_2\in\tilde\cA$, the line segments connecting $\bc(\alpha_1)$ to $\bc(\mu(\alpha_1))$ and $\bc(\alpha_2)$ to $\bc(\mu(\alpha_2))$ coincide, share an endpoint, or are disjoint.

\end{enumerate}

\item \emph{(finite energy)} the reduced bending energy \eqref{RedBEmod}--\eqref{RedED}, with $\hat\cA=\tilde\cA\cap\mu^{-1}(\tilde\cA)$, is finite.

\end{itemize}

\end{definition}
\noindent We will refer to line segments mentioned in \emph{\ref{M2}} as rulings, and we will use the notation $[\bc(\alpha),\bc(\mu(\alpha))]$ to denote the ruling connecting $\bc(\alpha)$ to $\bc(\mu(\alpha))$. Considering \eqref{RedED}, we recognize that a tacit feature of the assumption that the energy \eqref{RedBEmod}--\eqref{RedED} being finite is that $g_\bm$, $\hat\beta$, and $G$ satisfy
\beqn\label{aecond}
g_\bm(\alpha)-\hat\beta(\alpha)G(\alpha)>0\quad \text{for almost every}\ \alpha\in\hat\cA.
\eeqn

Before we delineate the steps needed to construct a surface from an admissible framed curve $(\bd,\bn)$, we establish a few preliminary results. The first such result yields a useful alternative to expression \eqref{deffp} for $\bff$ and an equally useful expression for $\bff'$.

\begin{proposition}\label{idcond}
Given an admissible framed curve $(\bd,\bn)$, with $\bg$ and $\bff$ defined in \eqref{defgp} and \eqref{deffp}, respectively, it follows that
\beqn\label{frepp}
\bff=g_\bt\bT+g_\bm\bM
\eeqn
and that
\beqn\label{f'repp}
\bff'=-G(g_\bm\bT-g_\bt\bM).
\eeqn
Moreover, \eqref{parnorm} and \eqref{parcond} hold on $\tilde\cA$.
\end{proposition}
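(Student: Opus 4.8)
The plan is to compute $\bff$ and $\bff'$ directly from their definitions \eqref{defgp}--\eqref{deffp} by re-expressing everything in terms of the scalar functions $g_\bt$ and $g_\bm$. First I would observe that, since $\bg = g_\bt\bt + g_\bm\bm$ by \eqref{defgg} and $\bt = \bd'$, the coefficient $\bg\cdot\bd'$ appearing in \eqref{deffp} is exactly $g_\bt$; hence $\sqrt{1-(\bg\cdot\bd')^2} = \sqrt{1-g_\bt^2} = g_\bm$, where the sign of the square root is fixed by \eqref{defgg2}$_2$ ($g_\bm>0$). Substituting into \eqref{deffp} gives \eqref{frepp} immediately. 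Note that this step is purely algebraic and relies on the compatibility of $\bn$ with $\bd$ only through the orthonormality of $\{\bt,\bm,\bn\}$, which holds by construction once $\bt=\bd'$ and $\bm=\bn\times\bt$.

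Next I would differentiate \eqref{frepp} and invoke the structure equations \eqref{DFr} for the Darboux frame $\{\bT,\bM,\bN\}$ of $\bc$. Using $\bT' = \kappa_\bc\bM$ and $\bM' = -\kappa_\bc\bT$, we get $\bff' = (g_\bt' - g_\bm\kappa_\bc)\bT + (g_\bm' + g_\bt\kappa_\bc)\bM$. Now the compatibility condition \eqref{cdncond}$_1$, $\kappa_g = \kappa_\bc$ on $\cM\setminus\cF$, lets us replace $\kappa_\bc$ by $\kappa_g$, and then the definition \eqref{defG} of $G$ together with the constraint $g_\bt g_\bt' + g_\bm g_\bm' = 0$ (differentiating \eqref{defgg2}$_1$) allows the coefficient vector $(g_\bt' - g_\bm\kappa_g,\ g_\bm'+g_\bt\kappa_g)$ to be rewritten as $-G(g_\bm, -g_\bt)$, exactly as was done for $\bg'$ in passing from \eqref{g'rep0} to \eqref{g'rep2}. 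This yields \eqref{f'repp}. The one subtlety here is making sure $g_\bt$ and $g_\bm$ are genuinely differentiable on $\tilde\cA$: this follows because $\bg$ is $C^1$ on $\tilde\cA$ (from \eqref{defgp}, since $\bn'/|\bn'|$ is $C^1$ by Definition~\ref{fcss} and $\kappa_n$ does not change sign locally on $\tilde\cA$), and $\bt$, $\bm$ are $C^1$ on $\cM\setminus\cF \supseteq \tilde\cA$, so $g_\bt = \bg\cdot\bt$ and $g_\bm = \bg\cdot\bm$ inherit $C^1$ regularity.

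It remains to verify \eqref{parnorm} and \eqref{parcond} on $\tilde\cA$. The normalizations \eqref{parnorm}, namely $|\bc'| = |\bd'| = 1$ and $|\bff| = |\bg| = 1$, are immediate: $\bc$ and $\bd$ are arclength parametrizations, and $|\bff|^2 = g_\bt^2 + g_\bm^2 = 1 = |\bg|^2$ from \eqref{frepp}, \eqref{defgg}, and \eqref{defgg2}$_1$. For \eqref{parcond} I would check the three identities in turn. From \eqref{f'repp} and \eqref{g'repp}, $|\bff'|^2 = G^2(g_\bm^2 + g_\bt^2) = G^2 = |\bg'|^2$, giving \eqref{parcond}$_1$. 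Resolving $\bc' = \bT$ against $\bff = g_\bt\bT + g_\bm\bM$ gives $\bc'\cdot\bff = g_\bt$, while $\bd'\cdot\bg = \bt\cdot(g_\bt\bt+g_\bm\bm) = g_\bt$, so \eqref{parcond}$_2$ holds. Finally, $\bc'\cdot\bff' = \bT\cdot(-G(g_\bm\bT - g_\bt\bM)) = -Gg_\bm$, and similarly $\bd'\cdot\bg' = \bt\cdot(-G(g_\bm\bt-g_\bt\bm)) = -Gg_\bm$, giving \eqref{parcond}$_3$.

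I expect the main obstacle to be not any single computation but rather the bookkeeping of regularity and the exact role of admissibility: one must be careful that \eqref{cdncond}$_1$ is what licenses the substitution $\kappa_\bc \to \kappa_g$, and that the frame $\{\bT,\bM,\bN\}$ here refers to the Darboux frame of the given boundary curve $\bc$ of the fixed reference region $\cD$ (not of an a priori immersion), while $\{\bt,\bm,\bn\}$ is built from the framed curve $(\bd,\bn)$. Once it is clear that $g_\bt,g_\bm$ are $C^1$ on $\tilde\cA$ and that the orthonormal frames obey \eqref{DFr} and \eqref{DFs} respectively, the rest is a direct translation of the calculations already carried out in Section~\ref{sectgeo} for a genuine immersion, now run in reverse starting from the definitions \eqref{defgp}--\eqref{deffp}.
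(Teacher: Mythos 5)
Your proposal is correct and follows essentially the same route as the paper: compute $\bff\cdot\bT=g_\bt$ and $\bff\cdot\bM=g_\bm$ from \eqref{deffp} and \eqref{defgg2} to get \eqref{frepp}, differentiate using \eqref{DFr} and \eqref{cdncond}$_1$, and then use \eqref{defG} together with $g_\bt g_\bt'+g_\bm g_\bm'=0$ to collapse the coefficients into $-G(g_\bm,-g_\bt)$, after which \eqref{parnorm} and \eqref{parcond} are direct component computations. Your added remarks on why $g_\bt$ and $g_\bm$ are $C^1$ on $\tilde\cA$ (local constancy of $\sgn\kappa_n$ and the assumed $C^1$ regularity of $\bn'/|\bn'|$) make explicit a point the paper leaves implicit, but do not change the argument.
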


\begin{proof}
The representation \eqref{frepp} of $\bff$ follows from the consequences $\bff\cdot\bT=\bg\cdot\bd'=g_\bt$ and $\bff\cdot\bM=\sqrt{1-(\bg\cdot\bd')^2}=\sqrt{1-g_\bt^2}=g_\bm$ of \eqref{deffp}, \eqref{defgg}, and \eqref{defgg2}. To obtain \eqref{f'repp}, begin by differentiating \eqref{frepp} and use \eqref{cdncond}$_1$ to find that
\beqn\label{f'repp0}
\bff'=(g_\bt'-g_\bm\kappa_g)\bT+(g_\bm'+g_\bt\kappa_g)\bM.
\eeqn
Next, recalling the definition of $G$ in \eqref{defG} and using \eqref{defgg2}$_1$ yields
\beqn\label{Gcalc}
Gg_\bm=g_\bm\kappa_g-g_\bt'\quad\text{and}\quad Gg_\bt=g_\bm'+g_\bt\kappa_g.
\eeqn
Putting together \eqref{f'repp0} and \eqref{Gcalc} results in \eqref{f'repp}.

The conditions \eqref{parnorm} and \eqref{parcond}$_{2}$ follow from the assumed properties of $\bc$ and $\bd$ and the definitions of $\bg$ and $\bff$. Also, the remaining conditions \eqref{parcond}$_{1,3}$ follow from \eqref{g'repp} and \eqref{f'repp}.
\end{proof}

We next show that on $\hat\cA$ rulings cannot share an endpoint. Notice that the set
\beqn\label{hcAdef}
\hat\cA=\{\alpha\in\cM\setminus\cF\ |\ \mu(\alpha)\not\in\cF,\ \kappa_n(\alpha)\ne0, \text{ and } \kappa_n(\mu(\alpha))\ne0 \}
\eeqn
is analogous to the set with the same name that was previously introduced in \eqref{althA}. Moreover, $\alpha\in\hat\cA$ if and only if the ruling $[\bc(\alpha),\bc(\mu(\alpha))]$ is not tangent to $\partial\cD$ at either of its endpoints. Since $\bg$ is continuous on $\tilde\cA$ and $\partial\cD$ is a continuous curve, it follows that $\hat\cA$ is an open set.

\begin{proposition}\label{M2strong}
Given an admissible framed curve $(\bd,\bn)$ and $\alpha_1\in\hat\cA$ and $\alpha_2\in\hat\cA$, the rulings $[\bc(\alpha_1),\bc(\mu(\alpha_1))]$ and $[\bc(\alpha_2), \bc(\mu(\alpha_2))]$ cannot share an endpoint.
\end{proposition}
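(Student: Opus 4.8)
The plan is to argue by contradiction, using the geometric fact established in Proposition~\ref{rulingprop} — that two rulings meeting on $\partial\cD$ must be collinear — together with the finite-energy hypothesis and the local regularity of $\mu$ furnished by Proposition~\ref{bbetamureg1}. Suppose the rulings $[\bc(\alpha_1),\bc(\mu(\alpha_1))]$ and $[\bc(\alpha_2),\bc(\mu(\alpha_2))]$ share an endpoint with $\alpha_1,\alpha_2\in\hat\cA$ and $[\bc(\alpha_1),\bc(\mu(\alpha_1))]\ne[\bc(\alpha_2),\bc(\mu(\alpha_2))]$. By relabeling (using condition \ref{M1} and the involutive structure of $\mu$, valid here because $g_\bm>0$ at the relevant points since $\alpha_i,\mu(\alpha_i)\in\tilde\cA$), we may assume the shared endpoint is $\bc(\alpha_1)=\bc(\alpha_2)$, i.e. $\alpha_1=\alpha_2=:\alpha_0$ — wait, that would make the rulings coincide; so instead the shared endpoint is $\bc(\mu(\alpha_1))=\bc(\mu(\alpha_2))$ with $\mu(\alpha_1)=\mu(\alpha_2)$, or the shared endpoint is $\bc(\alpha_1)=\bc(\mu(\alpha_2))$ with $\alpha_1=\mu(\alpha_2)$. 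In either case one reduces to: two distinct rulings emanate from a common point $\bc(\alpha_0)$ with $\alpha_0\in\tilde\cA$, $\kappa_n(\alpha_0)\neq0$, and $g_\bm(\alpha_0)>0$, so that neither ruling is tangent to $\partial\cD$ at $\bc(\alpha_0)$.

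The key step is then to derive a contradiction from the existence of two non-tangent rulings through $\bc(\alpha_0)$. By condition \ref{M2}, since these two rulings share the endpoint $\bc(\alpha_0)$ but are distinct, they are otherwise disjoint, so they form a genuine interior angle at $\bc(\alpha_0)$ that is neither $0$ nor $\pi$ (it cannot be $\pi$ because that would force tangency of the union with $\partial\cD$, contradicting $g_\bm(\alpha_0)>0$). Now I would invoke the local structure near $\alpha_0$: by Proposition~\ref{bbetamureg1}, $\mu$ is $C^1$ and hence continuous on a neighborhood of $\alpha_0$, so the rulings $[\bc(\alpha),\bc(\mu(\alpha))]$ vary continuously for $\alpha$ near $\alpha_0$ on the side of the first ruling. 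By condition \ref{M2} again, these nearby rulings are pairwise disjoint away from shared endpoints, and by continuity they sweep out a nondegenerate two-dimensional region of $\cD$ lying between the two rulings through $\bc(\alpha_0)$. On the image side, the corresponding image rulings (via \eqref{M10}/\ref{M1} and the definition of $\bg$) all emanate from $\bd(\alpha_0)$; since each is an asymptotic curve along which $\bn$ is constant, and the values of $\bn$ along image rulings meeting at $\bd(\alpha_0)$ must all equal $\bn(\alpha_0)$, the mean curvature $H$ computed from \eqref{Hrep4} on this region forces $\kappa_n$ to vanish there — contradicting $\kappa_n(\alpha_0)\ne0$ and the fact that $\hat\cA$ is open, so $\kappa_n\ne0$ persists on a neighborhood. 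Alternatively, and perhaps more cleanly in the present setting where no immersion is yet given, I would run the rulings-intersect argument: the two rulings through $\bc(\alpha_0)$ together with the continuously varying family between them force some pair of nearby rulings $[\bc(\alpha),\bc(\mu(\alpha))]$, $[\bc(\alpha'),\bc(\mu(\alpha'))]$ to intersect in the interior of $\cD$, violating the disjointness clause of \ref{M2}.

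I expect the main obstacle to be making the "continuously varying family of rulings sweeps out a region, forcing an intersection" argument rigorous, since $\mu$ is only known to be $C^1$ locally (Proposition~\ref{bbetamureg1}) and one must be careful that the geometry of $\partial\cD$ near $\bc(\mu(\alpha_0))$ does not conspire to keep the rulings disjoint. The way around this is to work infinitesimally rather than over a finite neighborhood: differentiate the relation $\bc(\mu(\alpha))=\bc(\alpha)+\hat\beta(\alpha)\bff(\alpha)$ at $\alpha_0$ (as in the proof of Corollary~\ref{injcondcor}, equation \eqref{muTcond1}) to get an explicit description of the "velocity" of the moving ruling, and compare the two one-sided limits coming from the two distinct rulings through $\bc(\alpha_0)$; the angle mismatch produces directly the contradiction with \ref{M2} or with the nonnegativity $g_\bm-\hat\beta G\ge0$ that underlies \eqref{aecond}. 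This localizes the argument and sidesteps any global pathology of $\partial\cD$.
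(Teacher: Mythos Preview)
Your proposal has the right overall shape (contradiction via the admissibility conditions), but there are two genuine gaps.

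First, you invoke Proposition~\ref{rulingprop}, Proposition~\ref{bbetamureg1}, and Corollary~\ref{injcondcor}, all of which live in Section~\ref{sectgeo} and are proved under the standing hypothesis that a $C^2$ isometric immersion $\bchi$ is given. In Section~\ref{constsur} there is no immersion yet; Proposition~\ref{M2strong} is one of the ingredients used to build it. Those citations are therefore circular. (The Section~\ref{constsur} analogue, Proposition~\ref{bbetamureg}, is stated \emph{after} Proposition~\ref{M2strong} and so is also unavailable.)

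Second, and more substantively, your geometric picture is inverted and your infinitesimal fallback cannot close the argument. You describe the nearby rulings as ``sweeping out a nondegenerate two-dimensional region'' between the two fixed rulings. The paper's proof shows the opposite: once two non-parallel rulings share the endpoint $\bc(\mu(\alpha_1))$, continuity of $\bff$ together with \emph{\ref{M2}} forces every ruling $[\bc(\alpha),\bc(\mu(\alpha))]$, for $\alpha$ on one side of $\alpha_1$ and in a small interval $I\subset\hat\cA$, to pass through that \emph{same} point (they cannot be disjoint from both fixed rulings without crossing one of them, and cannot coincide since the directions differ). Thus $\bc(\mu(\alpha))$ is constant on $I$. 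From $\bc(\alpha)+\hat\beta(\alpha)\bff(\alpha)=\bc(\alpha')+\hat\beta(\alpha')\bff(\alpha')$ for $\alpha,\alpha'\in I$, the paper derives $g_\bm(\alpha)-\hat\beta(\alpha)G(\alpha)=0$ for \emph{every} $\alpha\in I$, and this contradicts \eqref{aecond}. Your proposed fix --- differentiate once and compare two one-sided limits at a single point --- yields at best a pointwise identity; that cannot contradict \eqref{aecond}, which is only an almost-everywhere condition. The essential idea you are missing is that the contradiction must be obtained on a set of positive measure, and the mechanism for producing such a set is the ``fan of rulings through a common point'' forced by \emph{\ref{M2}} and continuity.
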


\begin{proof}
Contrary to the proposition, assume that $[\bc(\alpha_1),\bc(\mu(\alpha_1))]$ and $[\bc(\alpha_2),\bc(\mu(\alpha_2))]$ share an endpoint. By the definition of $\hat\cA$, the endpoints of these rulings cannot be corners of the boundary $\partial\cD$ of $\cD$. There are four mutually exclusive ways in which the rulings can meet:
\begin{enumerate}
\item $\bc(\alpha_1)=\bc(\alpha_2)$,
\item $\bc(\mu(\alpha_1))=\bc(\alpha_2)$,
\item $\bc(\mu(\alpha_2))=\bc(\alpha_1)$,
\item $\bc(\mu(\alpha_1))=\bc(\mu(\alpha_2))$.
\end{enumerate}
Since $\bc$ is injective, the first possibility implies that $[\bc(\alpha_1),\bc(\mu(\alpha_1))]$ and $[\bc(\alpha_2),\bc(\mu(\alpha_2))]$ coincide. 

To address the remaining possibilities, we separately consider the situations in which $[\bc(\alpha_1),\bc(\mu(\alpha_1))]$ and $[\bc(\alpha_2),\bc(\mu(\alpha_2))]$ are or are not parallel. 

If $[\bc(\alpha_1),\bc(\mu(\alpha_1))]$ and $[\bc(\alpha_2),\bc(\mu(\alpha_2))]$ are parallel, then they must be tangent to $\partial\cD$ at their shared endpoint. However, this cannot occur since $\alpha_1\in\hat\cA$ and $\alpha_2\in\hat\cA$.

Suppose, alternatively, that $[\bc(\alpha_1),\bc(\mu(\alpha_1))]$ and $[\bc(\alpha_2),\bc(\mu(\alpha_2))]$ are not parallel and that the second meeting option holds. Since the direction of rulings is continuous and \emph{\ref{M2}} ensures that these rulings can only meet at an endpoint, there is an interval of parameter values on one side of $\alpha_1$ such that all of the rulings associated with these parameter values must meet at $\bc(\mu(\alpha_1))$. It follows from this and that since $\hat\cA$ is open, that this interval can be made small enough so that it is contained in $\hat\cA$. 

If $\alpha$ and $\alpha'$ are any parameters in this interval, then
\beqn
\bc(\alpha)+\hat\beta(\alpha)\bff(\alpha)=\bc(\alpha')+\hat\beta(\alpha')\bff(\alpha').
\eeqn
Taking the dot product of this equation with $\bff'(\alpha')$ and solving for $\hat\beta(\alpha)$ results in
\beqn
\hat\beta(\alpha)=\frac{(\bc(\alpha')-\bc(\alpha))\cdot\bff'(\alpha')}{(\bff(\alpha)-\bff(\alpha'))\cdot\bff'(\alpha')}.
\eeqn
Passing to the limit $\alpha'\to\alpha$ yields
\beqn
\hat\beta(\alpha)=-\frac{\bc'(\alpha)\cdot\bff'(\alpha)}{|\bff'(\alpha)|^2}.
\eeqn
Next, use \eqref{f'repp} to find that
\beqn
0=g_m(\alpha)-\hat\beta(\alpha)G(\alpha).
\eeqn
Since this must hold on a subinterval of $\hat\cA$, this violates \eqref{aecond}, so this case is not possible.

The third and fourth meeting options are handled in a similar way. Thus, the claim holds.
\end{proof}

We next establish useful properties of the functions $\mu$, $\hat\beta$, $\bff$, and $\bg$ when they are restricted to $\hat\cA$.

\begin{proposition}\label{muprop}
The functions $\mu$, $\hat\beta$, $\bff$, and $\bg$ satisfy
\beqn
\mu(\mu(\alpha))=\alpha,
\quad
\hat\beta(\mu(\alpha))=\hat\beta(\alpha),
\quad
\bff(\mu(\alpha))=-\bff(\alpha),
\quad\text{and}\quad
\bg(\mu(\alpha))=-\bg(\alpha),
\label{prop14}
\eeqn
respectively, for each $\alpha\in\hat\cA$.
\end{proposition}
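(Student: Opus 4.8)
The plan is to prove each of the four identities in \eqref{prop14} in turn, fixing $\alpha\in\hat\cA$ throughout and setting $\alpha^*=\mu(\alpha)$; note that $\alpha^*\in\hat\cA$ as well, since $\alpha\in\hat\cA$ forces $\kappa_n(\alpha^*)\ne0$, $\alpha^*\notin\cF$, and $\mu(\alpha^*)\notin\cF$ (the latter because $\mu(\alpha^*)=\alpha\notin\cF$ once we have established the involution property). First I would establish the involution identity $\mu(\mu(\alpha))=\alpha$. The ruling $[\bc(\alpha),\bc(\alpha^*)]$ is a genuine line segment in $\bar\cD$ with endpoints on $\partial\cD$; since $\alpha,\alpha^*\in\hat\cA$, this segment is not tangent to $\partial\cD$ at either endpoint, and by Proposition~\ref{M2strong} it is the only ruling with an endpoint at $\bc(\alpha^*)$ among those indexed by $\hat\cA$ (more precisely, condition \emph{\ref{M2}} forces any ruling emanating from $\bc(\alpha^*)$ to either coincide with this one or be disjoint from it, and a ruling emanating from $\bc(\alpha^*)$ toward the interior of $\cD$ cannot be disjoint from $[\bc(\alpha),\bc(\alpha^*)]$ since they share the point $\bc(\alpha^*)$). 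Hence the unique ruling determined by $\bff(\alpha^*)$ via \eqref{deffp}--\eqref{mudef4} is exactly $[\bc(\alpha^*),\bc(\alpha)]$, so $\mu(\alpha^*)=\alpha$, which is \eqref{prop14}$_1$.

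Next I would read off \eqref{prop14}$_2$ and \eqref{prop14}$_3$ from the geometry now that the involution is known. Since $[\bc(\alpha),\bc(\alpha^*)]$ and $[\bc(\alpha^*),\bc(\alpha)]$ are the same segment, its length is $|\bc(\alpha^*)-\bc(\alpha)|$ whether computed from the $\alpha$ end or the $\alpha^*$ end; using the minimality characterization \eqref{bbetarep4} of $\hat\beta$ together with the fact that $\bff(\alpha^*)$ points into $\cD$ along this segment (away from $\bc(\alpha)$), we get $\hat\beta(\alpha^*)=|\bc(\alpha)-\bc(\alpha^*)|=\hat\beta(\alpha)$. For $\bff$, the defining relation \eqref{mudef4} gives $\bc(\alpha^*)=\bc(\alpha)+\hat\beta(\alpha)\bff(\alpha)$ and, applied at $\alpha^*$, $\bc(\alpha)=\bc(\alpha^*)+\hat\beta(\alpha^*)\bff(\alpha^*)$; subtracting and using $\hat\beta(\alpha^*)=\hat\beta(\alpha)>0$ yields $\bff(\alpha^*)=-\bff(\alpha)$, which is \eqref{prop14}$_3$.

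Finally I would obtain \eqref{prop14}$_4$. The natural route is to use condition \emph{\ref{M1}}: $\bd(\alpha^*)=\bd(\alpha)+\hat\beta(\alpha)\bg(\alpha)$ and, at $\alpha^*$, $\bd(\alpha)=\bd(\alpha^*)+\hat\beta(\alpha^*)\bg(\alpha^*)$; subtracting and using $\hat\beta(\alpha^*)=\hat\beta(\alpha)>0$ gives $\bg(\alpha^*)=-\bg(\alpha)$ directly. (Alternatively one can note that $\bg=\nabla\bchi\,\bff$ in spirit, but since we are in the constructive setting with no immersion yet, invoking \emph{\ref{M1}} is the clean argument; a consistency check via \eqref{deffp}, which recovers $\bff$ from $\bg$ and $\bd'$, confirms the two sign relations are compatible once one recalls $\bT(\alpha^*),\bM(\alpha^*)$ relate to $\bT(\alpha),\bM(\alpha)$ appropriately along the segment, though this check is not logically needed.)

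I expect the main obstacle to be the involution step: one must carefully justify that $\mu(\alpha^*)$ is well defined and equals $\alpha$ using only the hypotheses available in the constructive setting---namely \emph{\ref{M2}}, the openness of $\hat\cA$, and the non-tangency guaranteed by membership in $\hat\cA$---rather than appealing to the existence of an ambient immersion as was done in Proposition~\ref{bbetamureg1}. Once the involution is in hand, the remaining three identities are short consequences of the defining relations \eqref{mudef4} and \emph{\ref{M1}} together with $\hat\beta(\mu(\alpha))=\hat\beta(\alpha)$.
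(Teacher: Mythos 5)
Your proof is correct and takes essentially the same route as the paper's: the involution follows because two distinct rulings meeting at $\bc(\mu(\alpha))$ would contradict Proposition~\ref{M2strong}, and the remaining identities follow by applying \eqref{mudef4} and \emph{\ref{M1}} at both $\alpha$ and $\mu(\alpha)$. The only cosmetic difference is that the paper reads $\hat\beta(\mu(\alpha))=\hat\beta(\alpha)$ and $\bff(\mu(\alpha))=-\bff(\alpha)$ off simultaneously from the vector identity $\hat\beta(\mu(\alpha))\bff(\mu(\alpha))+\hat\beta(\alpha)\bff(\alpha)=\bzero$ using $|\bff|=1$ and $\hat\beta>0$, whereas you establish the length equality first; one small caution is that your parenthetical paraphrase of \emph{\ref{M2}} drops the ``share an endpoint'' alternative, which is precisely the case that Proposition~\ref{M2strong} is needed to exclude.
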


\begin{proof}
Suppose, contrary to the proposition, that $\mu(\mu(\alpha))\ne\alpha$ for some $\alpha\in\hhA$. Then, $[\bc(\alpha),\bc(\mu(\alpha))]$ and $[\bc(\mu(\alpha)),\bc(\mu(\mu(\alpha)))]$ must meet at $\bc(\mu(\alpha))$, which would violate Proposition~\ref{M2strong}. Thus, \eqref{prop14}$_1$ must hold for each $\alpha\in\hhA$, per the proposition. Next, by the definition \eqref{mudef4} of $\mu$,
\begin{align}
\bc(\mu(\mu(\alpha)))
&=\bc(\mu(\alpha))
+\hat\beta(\mu(\alpha))\bff(\mu(\alpha))
\notag\\[4pt]
&=\bc(\alpha)+\hat\beta(\alpha)\bff(\alpha)
+\hat\beta(\mu(\alpha))\bff(\mu(\alpha)),
\qquad
\alpha\in\hat\cA.
\end{align}
Thus, since, by \eqref{prop14}$_1$, $\bc(\mu(\mu(\alpha)))=\bc(\alpha)$ for each $\alpha\in\cA$, $\hat\beta$ and $\bff$ must satisfy
\beqn
\hat\beta(\mu(\alpha))\bff(\mu(\alpha))+\hat\beta(\alpha)\bff(\alpha)=\bzero,
\qquad
\alpha\in\hhA.
\label{421}
\eeqn
However, since $|\bff|=1$ by \eqref{deffp} and since $\hat\beta>0$ by definition, \eqref{421} holds only if \eqref{prop14}$_{2,3}$ hold for each $\alpha\in\hat\cA$, per the proposition. An analogous argument relying on \ref{M1} leads to the conclusion that \eqref{prop14}$_4$ holds for each $\alpha\in\hat\cA$, per the proposition.
\end{proof}

The next result establishes the regularity of both $\hat\beta$ and $\mu$. Its proof is essentially the same as that of Proposition~\ref{bbetamureg1} and, thus, is omitted.

\begin{proposition}\label{bbetamureg}
The functions $\hat\beta$ and $\mu$ are $C^1$ on $\hhA$. Moreover, $\mu'<0$ on $\hhA$.
\end{proposition}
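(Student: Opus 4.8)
The plan is to mimic the proof of Proposition~\ref{bbetamureg1}, since the setting is now abstract (there is no given isometric immersion) but the geometric content is identical. Fix $\alpha_0\in\hat\cA$. The first task is to record the geometric facts that replace the ones used in the earlier proof: by the definition \eqref{hcAdef} of $\hat\cA$, the ruling $[\bc(\alpha_0),\bc(\mu(\alpha_0))]$ is not tangent to $\partial\cD$ at either endpoint; equivalently, by \eqref{asymcond}-type reasoning together with \eqref{defgg2}, $g_\bm(\alpha_0)>0$ and $g_\bm(\mu(\alpha_0))>0$. I would also invoke Proposition~\ref{muprop} to record that $\mu(\mu(\alpha_0))=\alpha_0$ and, more importantly, that $\mu(\alpha_0)\in\hat\cA$ as well (this follows since $\kappa_n(\mu(\alpha_0))\neq 0$, $\kappa_n(\mu(\mu(\alpha_0)))=\kappa_n(\alpha_0)\neq 0$, and neither $\mu(\alpha_0)$ nor $\alpha_0$ lies in $\cF$), so that the argument can be run symmetrically at $\mu(\alpha_0)$.

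Next I would run the implicit function theorem argument verbatim. Define $\bh(\alpha,\beta,\gamma)=\bc(\alpha)+\beta\bff(\alpha)-\bc(\gamma)$ as in \eqref{hdfn}; it is $C^1$ near $(\alpha_0,\hat\beta(\alpha_0),\mu(\alpha_0))$ because $\bc$ is $C^2$ (hence $C^1$) away from $\cF$ and $\bff$ is $C^1$ on $\tilde\cA$ — the latter is where I would point back to \eqref{deffp} together with the remark, established in Subsection~\ref{Edn}, that $\bn'/|\bn'|$ being $C^1$ makes $\bg$ and hence $\bff$ of class $C^1$ on $\tilde\cA$. By \eqref{mudef4}, $\bh(\alpha,\hat\beta(\alpha),\mu(\alpha))=\bzero$ for $\alpha$ near $\alpha_0$. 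The partial gradient in $(\beta,\gamma)$ at the base point sends $(u_1,u_2)\mapsto u_1\bff(\alpha_0)-u_2\bc'(\mu(\alpha_0))$, which is invertible precisely because $g_\bm(\mu(\alpha_0))>0$ forces $\bff(\alpha_0)$ and $\bc'(\mu(\alpha_0))$ to be linearly independent (the ruling is not tangent to $\partial\cD$ at $\bc(\mu(\alpha_0))$). The implicit function theorem then yields unique $C^1$ functions agreeing with $\hat\beta$ and $\mu$ near $\alpha_0$, so $\hat\beta,\mu\in C^1$ on $\hat\cA$.

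For the sign of $\mu'$, I would first give the geometric monotonicity argument — as $\alpha$ moves to the left of $\alpha_0$, the ruling $[\bc(\alpha),\bc(\mu(\alpha))]$ cannot cross $[\bc(\alpha_0),\bc(\mu(\alpha_0))]$ (by \ref{M2} and Proposition~\ref{M2strong}, rulings in $\hat\cA$ are disjoint), forcing $\mu(\alpha)$ to move to the right of $\mu(\alpha_0)$, and symmetrically on the other side; hence $\mu'(\alpha_0)\le 0$. To upgrade to strict inequality, differentiate the involution identity $\mu(\mu(\alpha))=\alpha$ (Proposition~\ref{muprop}, valid on all of $\hat\cA$, and we have already noted $\mu(\alpha_0)\in\hat\cA$ so $\mu$ is differentiable there) to get $\mu'(\mu(\alpha_0))\mu'(\alpha_0)=1$, which rules out $\mu'(\alpha_0)=0$. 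The main obstacle — though it is minor — is bookkeeping: one must be careful that all the auxiliary points ($\mu(\alpha_0)$, and the nearby $\alpha$, $\mu(\alpha)$) genuinely lie in $\hat\cA$ so that $g_\bm>0$ and the $C^1$ regularity of $\bff$ are available throughout; this is handled by openness of $\hat\cA$ (noted just before Proposition~\ref{M2strong}) together with Proposition~\ref{muprop}. Since every ingredient is already in place, I would simply state that the proof is identical to that of Proposition~\ref{bbetamureg1} with $\cA\setminus\cF$ replaced by $\hat\cA$ and with the linear-independence hypothesis now supplied by membership in $\hat\cA$, and omit the repetition.
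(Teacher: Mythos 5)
Your proposal is correct and matches the paper's intent exactly: the paper omits this proof, stating only that it is essentially the same as that of Proposition~\ref{bbetamureg1}, and your write-up is precisely that adaptation, with the hypotheses of the earlier proposition ($g_\bm>0$ at both endpoints, $C^1$ regularity of $\bff$, non-crossing of rulings) correctly traced back to membership in $\hat\cA$, Definition~\ref{fcss}, \emph{\ref{M2}}/Proposition~\ref{M2strong}, and the involution property from Proposition~\ref{muprop}.
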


\noindent Our final preliminary result provides information beyond that in \eqref{aecond}.

\begin{proposition}\label{locinv}
Given $\alpha\in\tilde\cA$, the functions $g_\bm$ and $G$ must satisfy the inequality
\beqn\label{injcondcc0}
g_\bm(\alpha)-\beta G(\alpha)>0.
\eeqn
for each $\beta\in[0,\hat\beta(\alpha))$.
Moreover, $g_\bm$ and $G$ must satisfy the inequality
\beqn\label{injcondcc}
g_\bm(\alpha)-\hat\beta(\alpha) G(\alpha)>0
\eeqn
for each $\alpha\in\hat\cA$.

\end{proposition}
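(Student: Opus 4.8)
The plan is to proceed in three stages. First I would establish the non‑strict bound $g_\bm(\alpha)-\hat\beta(\alpha)G(\alpha)\ge 0$ for every $\alpha\in\tilde\cA$, the construction‑side counterpart of Proposition~\ref{propinjcondp}; the inequality \eqref{injcondcc0} then follows from it by a dichotomy on the sign of $G(\alpha)$. Finally I would upgrade the bound to the strict inequality \eqref{injcondcc} on $\hat\cA$ by reusing the differential identity behind Corollary~\ref{injcondcor}.

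For the non‑strict bound, fix $\alpha\in\tilde\cA$. By Proposition~\ref{idcond}, $\bff'=-G(g_\bm\bT-g_\bt\bM)$, so $|\bff'(\alpha)|^2=G(\alpha)^2$ and $\bc'(\alpha)\cdot\bff'(\alpha)=-G(\alpha)g_\bm(\alpha)$. If $\bff'(\alpha)=\bzero$ then $G(\alpha)=0$ and the bound reduces to $g_\bm(\alpha)>0$, which is \eqref{defgg2}. If $\bff'(\alpha)\ne\bzero$, I would argue exactly as in the proof of Proposition~\ref{propinjcondp}, with condition \emph{\ref{M2}} playing the role that ``rulings cannot intersect in $\cD$'' played there. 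Since $\bff$ is $C^1$ on the open set $\tilde\cA$ and points strictly into $\cD$, for every $\alpha'$ in a punctured neighborhood of $\alpha$ the vector $\bff(\alpha')$ is not parallel to $\bff(\alpha)$; hence the lines carrying the two rulings meet at a single point, at parameter
\[
\beta(\alpha')=\frac{\bff'(\alpha')\cdot(\bc(\alpha')-\bc(\alpha))}{\bff'(\alpha')\cdot(\bff(\alpha)-\bff(\alpha'))}
\]
along the ruling issuing from $\bc(\alpha)$, and $\beta(\alpha')\to g_\bm(\alpha)/G(\alpha)$ as $\alpha'\to\alpha$. Because \emph{\ref{M2}} prevents two rulings with distinct directions from meeting at a point interior to both, a limiting argument rules out $g_\bm(\alpha)/G(\alpha)\in(0,\hat\beta(\alpha))$, so that $g_\bm(\alpha)/G(\alpha)\le 0$ or $g_\bm(\alpha)/G(\alpha)\ge\hat\beta(\alpha)$; since $g_\bm(\alpha)>0$ and $\hat\beta(\alpha)>0$, either alternative gives $g_\bm(\alpha)-\hat\beta(\alpha)G(\alpha)\ge 0$.

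With the non‑strict bound in hand, \eqref{injcondcc0} is immediate: if $G(\alpha)\le 0$ then $g_\bm(\alpha)-\beta G(\alpha)\ge g_\bm(\alpha)>0$ for every $\beta\ge 0$, while if $G(\alpha)>0$ and $\beta<\hat\beta(\alpha)$ then $g_\bm(\alpha)-\beta G(\alpha)>g_\bm(\alpha)-\hat\beta(\alpha)G(\alpha)\ge 0$. For the strict inequality on $\hat\cA$, fix $\alpha\in\hat\cA$; then $\mu(\alpha)\in\tilde\cA$, while $\hat\beta$ and $\mu$ are $C^1$ near $\alpha$ with $\mu'(\alpha)\ne 0$ by Proposition~\ref{bbetamureg}. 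Differentiating \emph{\ref{M1}}, taking the cross product of the result with $\bg(\alpha)$, and simplifying with \eqref{defgg} and \eqref{g'repp} as in the derivation of \eqref{muTcond1}, I would obtain
\beqn
\mu'(\alpha)\,\bd'(\mu(\alpha))\times\bg(\alpha)=\bigl(g_\bm(\alpha)-\hat\beta(\alpha)G(\alpha)\bigr)\bn(\alpha).
\eeqn
By Proposition~\ref{muprop}, $\bg(\mu(\alpha))=-\bg(\alpha)$, and the $\bm(\mu(\alpha))$‑component of $\bg(\mu(\alpha))$ equals $g_\bm(\mu(\alpha))>0$, so $\bg(\alpha)$ is not parallel to $\bd'(\mu(\alpha))$; the left‑hand side above is therefore nonzero, whence $g_\bm(\alpha)-\hat\beta(\alpha)G(\alpha)\ne 0$. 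Combined with the non‑strict bound, this yields $g_\bm(\alpha)-\hat\beta(\alpha)G(\alpha)>0$.

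The step I expect to be the main obstacle is the limiting argument in the second stage. The difficulty is that \emph{\ref{M2}} is genuinely weaker than the no‑crossing hypothesis available in Proposition~\ref{propinjcondp}, since rulings are now permitted to share endpoints; one must therefore check carefully that, whenever the limiting parameter $g_\bm(\alpha)/G(\alpha)$ would lie in the open interval $(0,\hat\beta(\alpha))$, the intersection point of the two carrying lines is, for $\alpha'$ close to $\alpha$, interior to \emph{both} rulings, which produces the forbidden crossing. This calls for a compactness argument along the segment $\{\bc(\alpha)+t\bff(\alpha)\mid 0<t\le\beta_1\}$ for a fixed $\beta_1<\hat\beta(\alpha)$, showing that $\hat\beta(\alpha')>\beta_1$ once $\alpha'$ is sufficiently close --- precisely the point at which the proof of Proposition~\ref{propinjcondp} must be adapted rather than quoted verbatim.
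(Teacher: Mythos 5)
Your proposal is correct and follows essentially the same route as the paper: the non-strict bound $g_\bm-\hat\beta G\ge0$ on $\tilde\cA$ via the intersection argument of Proposition~\ref{propinjcondp} adapted to \emph{\ref{M2}}, the sign dichotomy on $G(\alpha)$ for \eqref{injcondcc0}, and the cross product of the differentiated \emph{\ref{M1}} with $\bg(\alpha)$ combined with Propositions~\ref{muprop} and~\ref{bbetamureg} for the strict inequality on $\hat\cA$. The paper only gestures at the first stage as an argument ``reminiscent of'' Proposition~\ref{propinjcondp}, so the endpoint-sharing subtlety you flag is a fair point of care but harmless: a shared endpoint merely weakens the pre-limit alternatives to $\beta\ge\hat\beta(\alpha)$ or $\beta\le0$, which pass to the same non-strict limit.
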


\begin{proof}
In view of \emph{\ref{M2}}, \eqref{aecond}, and the continuity of the functions $g_\bm$, $\hat\beta$, and $G$, an argument reminiscent of that used to prove Proposition~\ref{propinjcondp} can be used to establish the inequality
\beqn\label{injcondp.2}
g_\bm-\hat\beta G\ge 0\quad \text{on}\ \tilde\cA.
\eeqn

To establish \eqref{injcondcc0}, let $\alpha\in\tilde\cA$ and separately consider the cases $G(\alpha)\le0$ and $G(\alpha)>0$. If $G(\alpha)\le0$, then \eqref{injcondcc} clearly holds since $g_\bm(\alpha)> 0$ and $\beta\ge0$. If $G(\alpha)>0$, \eqref{injcondp.2} can be used to infer that, for $\alpha\in\tilde\cA$ and $\beta<\hat\beta(\alpha)$,
\beqn
g_\bm(\alpha)-\beta G(\alpha)>g_\bm(\alpha)-\hat\beta(\alpha) G(\alpha)\ge 0.
\eeqn
Thus, \eqref{injcondcc0} holds. 

To establish \eqref{injcondcc}, let $\alpha\in\hat\cA$, compute the vector product of the derivative of \emph{\ref{M1}} with $\bg(\alpha)$, and use \eqref{defgg}, \eqref{defgg2}, and \eqref{g'repp} to find that
\begin{align}\label{muTcond}
\mu'(\alpha)\bd'(\mu(\alpha))\times\bg(\alpha)
&=(g_\bm(\alpha)-\hat\beta(\alpha)G(\alpha))\bn(\alpha).
\end{align}
Since $g_\bm(\mu(\alpha))\ne0$ for $\mu(\alpha)\in\tilde\cA$, $\bd'(\mu(\alpha))$ cannot be parallel to $\bg(\mu(\alpha))$. Moreover, since $\bg(\mu(\alpha))=-\bg(\alpha)$ by Proposition~\ref{muprop}, $\bd'(\mu(\alpha))$ also cannot be parallel to $\bg(\alpha)$. In combination with the understanding, from Proposition~\ref{bbetamureg}, that $\mu'(\alpha)\ne0$ for $\alpha\in\hat\cA$, it follows that the left-hand side of \eqref{muTcond} cannot vanish. Thus, $g_\bm(\alpha)-\hat\beta(\alpha)G(\alpha)\ne0$ and it follows that \eqref{injcondcc} holds for $\alpha\in\hat\cA$.
\end{proof}

We are now positioned to establish the main result of this section.

\begin{theorem}\label{fSconstP}
Given an admissible framed curve $(\bd,\bn)$, there is a $C^1$ isometric immersion $\bchi:\bar\cD\rightarrow\cE$ such that $\bd=\bchi\circ\bc$ and $\bn(\alpha)$ is normal to $\bchi(\bar\cD)$ at $\bd(\alpha)$ for each $\alpha\in\cM$. Moreover, $\chi$ is $C^2$ almost everywhere.
\end{theorem}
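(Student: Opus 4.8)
The plan is to build $\bchi$ piece by piece using the ruling structure dictated by the admissible framed curve, then verify regularity and the isometry property. First I would partition $\bar\cD$ into a ``curved part'' and a ``flat part.'' The curved part is the closure of the union of all rulings $[\bc(\alpha),\bc(\mu(\alpha))]$ with $\alpha\in\tilde\cA$; on this set I define $\bchi$ via the two parametrizations $\hat\bx$ and $\hat\br$ from Section~\ref{sectpar}, setting $\bchi(\hat\bx(\alpha,\beta))=\hat\br(\alpha,\beta)=\bd(\alpha)+\beta\bg(\alpha)$ for $(\alpha,\beta)\in\cP(\tilde\cA)$. By Proposition~\ref{muprop} this prescription is consistent on overlapping rulings (since $\mu$ is an involution with $\bg(\mu(\alpha))=-\bg(\alpha)$ and $\hat\beta(\mu(\alpha))=\hat\beta(\alpha)$), and by condition \emph{\ref{M2}} together with Proposition~\ref{locinv} distinct rulings do not cross in the interior, so $\bchi$ is well-defined on the curved part. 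On each connected component of the flat part, whose boundary consists of rulings and subarcs of $\partial\cD$, I define $\bchi$ to be the unique rigid motion that matches the already-prescribed values of $\bchi$ and $\nabla\bchi$ along one of the bounding rulings (or, for components meeting $\partial\cD$ only, along the relevant boundary data $\bd$); the developability and the fact that $\nabla\bchi$ is constant along each ruling make this extension unambiguous.

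Next I would check the isometry property. On the curved part, the metric induced by $\hat\br$ equals the metric induced by $\hat\bx$: this is exactly the content of the relations \eqref{parnorm} and \eqref{parcond}, which hold on $\tilde\cA$ by Proposition~\ref{idcond}; hence $(\nabla\bchi)^\trans\nabla\bchi=\bone$ wherever $\hat\bx$ is a regular chart, and regularity of $\hat\bx$ on the interior of each ruling is guaranteed by \eqref{injcondcc0}. On the flat parts $\bchi$ is a rigid motion, hence trivially isometric. Local injectivity then follows because $\nabla\bchi$ has full rank everywhere (rank $2$ from the isometry relation), so $\bchi$ is a local diffeomorphism, i.e. an immersion.

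The regularity claims require the most care, and I expect the $C^1$-matching across the interfaces between curved and flat components, and across the finitely many exceptional rulings, to be the main obstacle. The key mechanism is that along any ruling the tangent plane of $\bchi(\bar\cD)$ is constant (rulings are asymptotic curves, so $\bn$ is constant along an image ruling and by \eqref{gK=0} the ruling direction $\bg$ lies in the tangent plane); therefore the value of $\nabla\bchi$ along a ruling is determined by $\bd$, $\bn$, and $\bg$ at the endpoint $\alpha$, and these agree whether the ruling is approached from a curved region or from an adjacent flat region. The compatibility conditions \eqref{cdncond} (preservation of geodesic curvature and of exterior angles at corners) guarantee that the boundary data match up correctly at the finitely many points of $\cF$ and where $\kappa_n$ vanishes. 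Using the continuity of $\bg=\sgn(\kappa_n)\,\bn'\times\bn/|\bn'|$ and of $\bn'/|\bn'|$ built into Definition~\ref{fcss}, together with Propositions~\ref{bbetamureg} and \ref{M2strong}, one shows $\nabla\bchi$ extends continuously across all interfaces, giving $\bchi\in C^1(\bar\cD)$. Finally, $C^2$ regularity holds on the interior of the flat parts (where $\bchi$ is affine) and on the image of $\hat\bx$ restricted to $\hat\cA$ (where $\hat\br$ and $\hat\bx$ are $C^2$ regular parametrizations, using that $g_\bm>0$ and the Jacobian is nonzero by \eqref{injcondcc0}); by Propositions~\ref{hAprop} and \ref{measurezero} and the finiteness of $\cF$ and $\cI$, the complement of these sets in $\bar\cD$ has zero area, so $\bchi$ is $C^2$ almost everywhere.
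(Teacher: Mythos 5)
Your overall architecture coincides with the paper's: define $\bchi$ on the ruled part through the pair $\hat\bx$, $\hat\br$, extend by rigid motions on the flat components, and then fight for $C^1$ regularity at the interfaces. Two steps, however, contain genuine gaps. First, the rigid extension to a flat component $\cD_f$ is not ``unambiguous'' in the way you assert: $\partial\cD_f$ generally consists of several $\partial$-rulings and several arcs of $\partial\cD$, and the single rigid motion chosen to match the data along one bounding ruling must be shown to agree with the already prescribed values of $\bd$ on all the other boundary arcs and with the images of all the other bounding rulings. This is exactly where the finite-energy hypothesis enters, and your proposal never uses it for this purpose: since $\phi=\infty$ wherever $\alpha\not\in\hat\cA$ and $\bn'(\alpha)\ne\bzero$, finiteness of $E$ forces $\bn'=\bzero$ almost everywhere, hence $\bn$ constant, on every maximal interval of $\cM\setminus\hat\cA$, so that $\bd$ is planar there; combined with the identity $\bn(\mu(\alpha))=\bn(\alpha)$ derived from \emph{\ref{M1}} and the compatibility conditions \eqref{cdncond}, one concludes that the gap in $\hat\cS_c\cup\bd(\cM)$ is a planar congruent copy of $\cD_f$. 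Without this argument the extension step does not go through.

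Second, your justification of $C^2$ regularity almost everywhere --- that $\hat\bx$ and $\hat\br$ are ``$C^2$ regular parametrizations'' --- is false under the hypotheses: $\bg$ and $\bff$ are built from $\bn'/|\bn'|$, which Definition~\ref{fcss} only makes $C^1$, so $\hat\bx$ and $\hat\br$ are merely $C^1$ and $C^2$ regularity of $\hat\br\circ\hat\bx^{-1}$ cannot be read off from them. The paper instead computes the gradient \eqref{gradhbnu} of the pulled-back unit normal, shows it is continuous away from the (isolated, hence at most countably many) $\partial$-rulings of type \emph{\ref{L3}}, and upgrades to $C^2$ via the Myers--Steenrod theorem; some argument of this kind is required. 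Relatedly, you cite Propositions~\ref{hAprop} and \ref{measurezero} for the measure-zero claim, but those are proved in Section~\ref{sectgeo} under the assumption that an isometric immersion is already given; in the construction the analogous statement rests on the isolation of type-\emph{\ref{L3}} $\partial$-rulings established in Part~2 of the paper's proof. Your $C^1$ plan does identify the correct mechanism (constancy of the tangent plane along rulings), but note that differentiability of $\bchi$ at points of exceptional $\partial$-rulings requires the difference-quotient estimate of Part~4 of the paper's proof, not merely continuity of the candidate gradient.
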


\begin{proof}
The proof is divided into six steps, beginning with showing that a pair of parameterizations suffice to define $\bchi$, as an isometry, on a portion of $\bar\cD$. This step is followed by five additional steps in which the initial findings are progressively broadened to: extend the definitions of the functions $\bff$, $\bg$, and $\mu$ to include the boundary of the set $\hat\cA$ defined in \eqref{hcAdef}; show that the original definition of $\bchi$ can be extended, maintaining its status as an isometry, including what remains of $\bar\cD$; establish the $C^1$ regularity of $\bchi$ on $\cD$; demonstrate that the $C^1$ regularity of $\bchi$ extends to the closure $\bar\cD$ of $\cD$; and, finally, show that $\bchi$ has $C^2$ regularity almost everywhere.


\begin{proofpart}

The aim in this step of the proof is to show that two parameterizations suffice to describe an isometric immersion on a portion of $\bar\cD$. Consider parameterizations $\hat\bx$ and $\hat\br$ defined such that
\beqn\label{hxthm}
\hat\bx(\alpha,\beta)=\bc(\alpha)+\beta\bff(\alpha),\qquad (\alpha,\beta)\in\cP(\hat\cA)
\eeqn
and
\beqn\label{hrthm}
\hat\br(\alpha,\beta)=\bd(\alpha)+\beta\bg(\alpha),\qquad (\alpha,\beta)\in\cP(\hat\cA).
\eeqn
It is now shown that $\hat\bx$ is a two-to-one function on $\cP(\hhA)$. Let $\hat\cD_c$ denote the set $\hat\bx(\cP(\hhA))$, and consider $\bx_0\in\hat\cD_c$. By the definition of $\hat\cD_c$, $\hat\bx(\alpha_0,\beta_0)=\bx_0$ for some $(\alpha_0,\beta_0)\in\cP(\hhA)$. It then follows from \eqref{mudef4} and Proposition~\ref{muprop} that 
\beqn\label{bx221}
\hat\bx(\alpha_0,\beta_0)=\hat\bx(\mu(\alpha_0),\hat\beta(\alpha_0)-\beta_0)
\eeqn
and, thus, that $\hat\bx$ also maps $(\mu(\alpha_0),\hat\beta(\alpha_0)-\beta_0)$ to $\bx_0$. Consider a point $(\alpha,\beta)\in\cP(\hhA)$ that is mapped to $\bx_0$ under $\hat\bx$. The ruling $[\bc(\alpha),\bc(\mu(\alpha))]$ intersects the ruling $[\bc(\alpha_0),\bc(\mu(\alpha_0))]$ at $\bx_0$. However, by \emph{\ref{M2}} and Proposition~\ref{M2strong}, that intersection is possible only if the two rulings coincide and, hence, only if $\alpha=\alpha_0$ or $\alpha=\mu(\alpha_0)$. If $\alpha=\alpha_0$, then it follows from \eqref{hxthm} that $\beta=\beta_0$; alternatively, if $\alpha=\mu(\alpha_0)$, then $\beta=\hat\beta(\alpha_0)-\beta_0$. Thus, $\hat\bx$ is two-to-one on $\hat\cA$, as claimed. 

Let the set $\hat\br(\cP(\hhA))$ be denoted by $\hat\cS_c$. An argument similar to that given immediately above leads to the conclusion that $\hat\br$ must satisfy
\beqn\label{br221}
\hat\br(\alpha,\beta)=\hat\br(\mu(\alpha),\hat\beta(\alpha)-\beta),\qquad (\alpha,\beta)\in\cP(\hhA).
\eeqn
However, $\hat\br$ is not necessarily two-to-one.

Since \eqref{bx221} and \eqref{br221} hold, it is legitimate to define the function $\bchi:\hat\cD_c\rightarrow\hat\cS_c$ by the provision
\beqn\label{bchidefthm}
\hat\br(\alpha,\beta)=\bchi(\hat\bx(\alpha,\beta)),\qquad (\alpha,\beta)\in\cP(\hhA).
\eeqn
Moreover, since, by Proposition~\ref{locinv},
\beqn
|\hat\bx_\alpha(\alpha,\beta)\times\hat\bx_\beta(\alpha,\beta)|=g_\bm(\alpha)-\beta G(\alpha)\ne0,
\qquad 
(\alpha,\beta)\in \cP(\hat\cA),
\eeqn
the inverse function theorem\footnote{See, for example, Spivak \cite[Chapter 2]{Spivak65}.} can be applied to show that $\hat\bx$ is locally invertible with a $C^1$ inverse. Hence, locally, $\bchi=\hat\br\circ\hat\bx^{-1}$ and, consequently, $\bchi$ is $C^1$ on $\hat\cD_c$. 

To demonstrate that $\bchi$ is an isometry on $\hat\cD_c$, notice first that, by Proposition~\ref{idcond}, the conditions \eqref{parnorm} and \eqref{parcond} can be used to show that, for all $(\alpha,\beta)\in\cP(\hat\cA)$ and $(u_1,u_2)\in\Real^2$,
\beqn
|\nabla\hat\br(\alpha,\beta)(u_1,u_2)|=|\nabla\hat\bx(\alpha,\beta)(u_1,u_2)|.
\eeqn
Upon applying the gradient to \eqref{bchidefthm}, it thus follows that
\beqn
|\nabla\hat\bx(\alpha,\beta)(u_1,u_2)|=|\nabla\bchi(\hat\bx(\alpha,\beta))\nabla\hat\bx(\alpha,\beta)(u_1,u_2)|
\label{437}
\eeqn
for all $(\alpha,\beta)\in\cP(\hat\cA)$ and $(u_1,u_2)\in\Real^2$. Since $\nabla\hat\bx$ is invertible, the condition \eqref{437} is sufficient to ensure that $\nabla\bchi$ preserves the magnitude of every vector tangent to $\hat\cD_c$ and, hence, that $\bchi$ is an isometric immersion from $\hat\cD_c$ to $\hat\cS_c$. 

Since $\bchi$ is a $C^1$ isometry on $\hat{\cD}_c$, it follows that $\nabla\bchi(\hat\bx(\alpha,0))$ must take $\bT(\alpha)$ to $\bt(\alpha)$ and $\bM(\alpha)$ to $\bm(\alpha)$. Hence, $\nabla\bchi(\hat\bx(\alpha,0))=\bt(\alpha)\otimes\bT(\alpha)+\bm(\alpha)\otimes\bM(\alpha)$. Moreover, from the definition of $\bchi$ it can be seen that it takes reference rulings to image rulings, with the consequence that its gradient is constant along rulings. Thus, $\nabla\bchi$ is given by
\beqn\label{gradchi}
\nabla\bchi(\hat\bx(\alpha,\beta))=\bt(\alpha)\otimes\bT(\alpha)+\bm(\alpha)\otimes\bM(\alpha),\qquad 
(\alpha,\beta)\in\cP(\hat\cA).
\eeqn

\end{proofpart}


\begin{proofpart}

The aim in this step of the proof is to extend $\bff$, $\bg$, $\hat\beta$, and $\mu$ to the boundary of $\hat\cA$. Consider a maximal interval $[\alpha_-,\alpha_+]$ in $\cM\setminus \hhA$, allowing for the possibility that $\alpha_-=\alpha_+$. Focusing first on $\alpha_-$, consider the question of whether the limit
\beqn\label{flimit}
\lim_{\alpha\uparrow\alpha_-,\,\alpha\in\hat\cA}\bff(\alpha)
\eeqn
exists. If the limit in \eqref{flimit} does not exist, then, since $\bff$ has a compact codomain, there must exist increasing sequences $\alpha_{1n}\in\hat\cA$ and $\alpha_{2n}\in\hat\cA$, both converging to $\alpha_-$, with $\alpha_{1n}<\alpha_-$ and $\alpha_{2n}<\alpha_-$, but for which\beqn
\lim_{n\rightarrow\infty}\bff(\alpha_{1n})\ne\lim_{n\rightarrow\infty}\bff(\alpha_{2n}).
\eeqn
This implies that there would be rulings that are arbitrarily close but point in different directions. Such rulings must necessarily cross, violating \emph{\ref{M2}}. The limit in \eqref{flimit} therefore exists, with the consequence \eqref{defgg} and \eqref{frepp} can be used to conclude that the analogous limit in which $\bff$ is replaced by $\bg$ exists. It is thus possible to define $\bff_-(\alpha_-)$ and $\bg_-(\alpha_-)$ by
\beqn\label{rlimits}
\bff_-(\alpha_-)=\lim_{\alpha\uparrow\alpha_-,\,\alpha\in\hhA}\bff(\alpha)\qquad\text{and}\qquad
\bg_-(\alpha_-)=\lim_{\alpha\uparrow\alpha_-,\,\alpha\in\hhA}\bg(\alpha).
\eeqn
Since $\mu$ is decreasing on $\hat\cA$ and is bounded from below at $\alpha_-$, it is also possible to define $\mu_-(\alpha_-)\in\cM$ by
\beqn\label{mulimit}
\mu_-(\alpha_-)=\lim_{\alpha\uparrow\alpha_-,\,\alpha\in\hhA}\mu(\alpha).
\eeqn

Next, since
\beqn
\bff(\alpha)=\frac{\bc(\mu(\alpha))-\bc(\alpha)}{|\bc(\mu(\alpha))-\bc(\alpha)|},\qquad\alpha\in\hhA,
\eeqn
it follows from \eqref{rlimits}$_1$ that if $\mu_-(\alpha_-)\not=\alpha_-$, then
\beqn
\bff_-(\alpha_-)=\lim_{\alpha\uparrow\alpha_-,\,\alpha\in\hhA}\frac{\bc(\mu(\alpha))-\bc(\alpha)}{|\bc(\mu(\alpha))-\bc(\alpha)|}=\frac{\bc(\mu_-(\alpha_-))-\bc(\alpha_-)}{|\bc(\mu_-(\alpha_-))-\bc(\alpha_-)|}
\eeqn
and, thus, that $\bc(\mu_-(\alpha_-))$ must be on the half-line emanating from $\bc(\alpha_-)$ in the direction $\bff_-(\alpha_-)$. Hence, $\mu_-(\alpha_-)\not\in\hhA$. Moreover, $\mu_-(\alpha_-)$ must be the right endpoint of a maximal interval in $\cM\setminus\hhA$. An analogous argument can be used to demonstrate that $\bd(\mu_-(\alpha_-))$ must be on the half-line that emanates from $\bd(\alpha_-)$ and is directed along $\bg_-(\alpha_-)$. Thus, in view of \eqref{mulimit} and the relation
\beqn
\hat\beta(\alpha)=|\bc(\mu(\alpha))-\bc(\alpha)|=|\bd(\mu(\alpha))-\bd(\alpha)|,\qquad\alpha\in\hhA,
\eeqn
it is possible to define $\hat\beta_-(\alpha_-)$ by
\beqn
\hat\beta_-(\alpha_-)=\lim_{\alpha\uparrow\alpha_-,\,\alpha\in\hhA}\hat\beta(\alpha).
\eeqn
In view of \eqref{mulimit}, it thus follows that
\beqn
\bc(\mu_-(\alpha_-))=\bc(\alpha_-)+\hat\beta_-(\alpha_-)\bff_-(\alpha_-)
\eeqn
and that
\beqn
\bd(\mu_-(\alpha_-))=\bd(\alpha_-)+\hat\beta_-(\alpha_-)\bg_-(\alpha_-).
\eeqn

An analogous argument can be applied to the right endpoint $\alpha_+$ of the interval $[\alpha_-,\alpha_+]$.  A subscripted plus sign will therefore be used to denote objects associated with $\alpha_+$, in parallel with the convention already used for objects associated with $\alpha_-$.

Line segments of the form $[\bc(\alpha_-),\bc(\mu_-(\alpha_-))]$ or $[\bc(\alpha_+),\bc(\mu_+(\alpha_+))]$ will be called $\partial$-rulings --- as they are belong to the boundary of the set $\hat\cD_c$ that is covered by rulings. Since rulings cannot cross if \emph{\ref{M1}} holds, $\partial$-rulings cannot cross in $\cD$, but, unlike rulings, they can meet at the boundary $\partial\cD$, overlap with each other on straight-line segments, or have zero length. See Figure~\ref{Dfig2} for a depiction of $\partial$-rulings.
Each $\partial$-ruling $\cL_0$ belongs to one of three mutually exclusive categories, determined by the properties of its endpoints:

\begin{enumerate}[label=\textbf{\textit{L\arabic*}}]

\item \label{L1} there is an endpoint $\bc(\alpha_0)$ of $\cL_0$  such that $\alpha_0\not\in\cF$, $\bn'(\alpha_0)\ne\bzero$, and $\cL_0$ is not tangent to $\partial\cD$ at $\bc(\alpha_0)$,

\item \label{L2} there is an endpoint $\bc(\alpha_0)$ of $\cL_0$ such that $\alpha_0\not\in\cF$, $\bn'(\alpha_0)=\bzero$, and $\cL_0$ is not tangent to $\partial\cD$ at $\bc(\alpha_0)$,

\item \label{L3} at each endpoint $\bp$ of $\cL_0$, $\partial\cD$ has a corner at $\bp$, $\cL_0$ is tangent to $\partial\cD$ at $\bp$, or both of these conditions are met.

\end{enumerate}

\begin{figure}[h]
\centering
\includegraphics[width=4in]{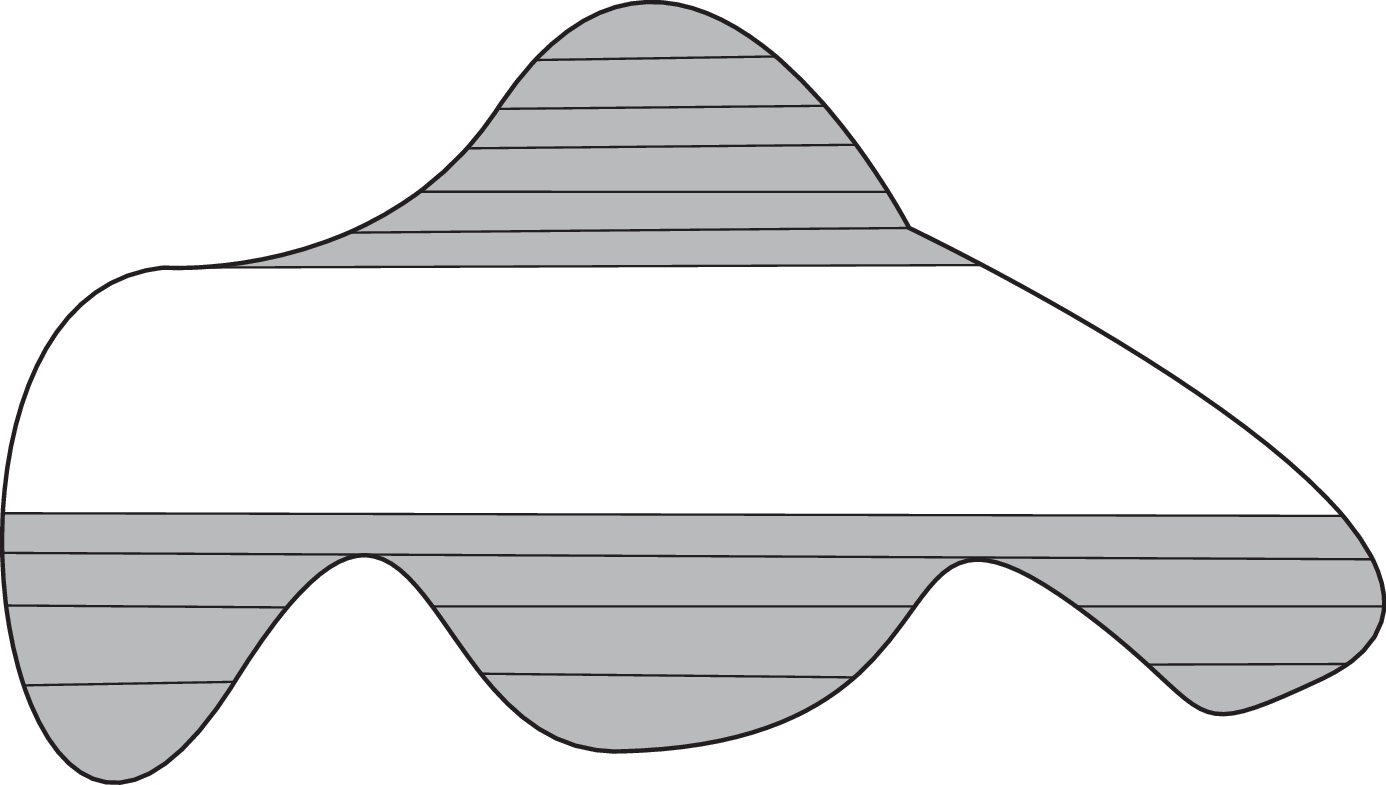}
\thicklines
\put(-291,45.5){$\bullet$}
\put(-318,45.5){$\bc(\alpha_1)$}
\put(-216,45.5){$\bullet$}
\put(-227,33){$\bc(\alpha_2)$}
\put(-88,44.5){$\bullet$}
\put(-96,33){$\bc(\alpha_3)$}
\put(-6,44.5){$\bullet$}
\put(3,44.5){$\bc(\alpha_4)$}
\put(-249,105){$\bullet$}
\put(-259,115){$\bc(\alpha_7)$}
\put(-88,105.5){$\bullet$}
\put(-80,110){$\bc(\alpha_5)$}
\put(-155.5,160){$\bullet$}
\put(-165,168){$\bc(\alpha_6)$}
\caption{Reference region $\cD$ with straight line segments representing rulings and $\partial$-rulings induced by a framed curve. The grey zones correspond to the subset $\hat\cD_c$ of $\cD$, up to a set of measure zero. For instance, the line segment $[\bc(\alpha_1),\bc(\alpha_4)]$ is not contained in $\hat\cD_c$. Assuming that $\bc$ parametrizes $\partial\cD$ in the counterclockwise direction, the $\partial$-ruling $[\bc_-(\alpha_1),\bc_-(\mu_-(\alpha_1))]$ coincides with the line segment $[\bc(\alpha_1),\bc(\alpha_4)]$, while the $\partial$-ruling $[\bc_+(\alpha_1),\bc_+(\mu_-(\alpha_1))]$ equals $[\bc(\alpha_1),\bc(\alpha_2)]$. The line segment $[\bc(\alpha_2),\bc(\alpha_3)]$ corresponds to the $\partial$-ruling $[\bc_+(\alpha_2),\bc_+(\mu_+(\alpha_2))]$, and the $\partial$-ruling $[\bc_+(\alpha_5),\bc_+(\mu_+(\alpha_5))]$ equals $[\bc(\alpha_5),\bc(\alpha_7)]$. Finally, the point $\bc(\alpha_5)$ coincides with the degenerate $\partial$-ruling $[\bc_-(\alpha_6),\bc_-(\mu_-(\alpha_6))]$, which has zero length.}
\label{Dfig2} 
\end{figure}

Before moving to the next step in the proof, it is shown that a $\partial$-ruling that has positive length and satisfies \emph{\ref{L3}} is isolated in the sense that excluding its endpoints, it is contained in a neighborhood of $\cD$ that intersects no other $\partial$-ruling of positive length that also satisfies \emph{\ref{L3}}. To establish this, consider a $\partial$-ruling $\cL_0$ with positive length. It is first shown that for every $\bx_0\in\cL_0\cap\cD$ there is a neighborhood that does not intersect any other $\partial$-ruling satisfying \emph{\ref{L3}}. This conclusion is reached by the method of contradiction. If $\bx_0$ were not isolated as described, there would exist a sequence of points $\bx_n\in\cD$ converging to $\bx_0$ such that $\bx_n$ lies on a $\partial$-ruling, say $\cL_n$, distinct from $\cL_0$, that satisfies \emph{\ref{L3}}. Since $\partial\cD$ has only a finite number of corners, for sufficiently large $n$ the endpoints of $\cL_n$ cannot be corners of $\partial\cD$ and, so, must be points of tangency with $\partial\cD$. For $n\in\Nat\cup\{0\}$, let $\alpha_n\in\cM$ and $\alpha_n^*\in\cM$ be such that $\cL_n=[\bc(\alpha_n),\bc(\alpha_n^*)]$. Since no two $\partial$-rulings can cross in $\cD$, it can be assumed, by potentially considering a subsequence, that the order of the $\partial$-rulings in this sequence are such that $\alpha_n$ is a decreasing sequence with $\alpha_0<\alpha_n$ and that $\alpha_n^*$ is an increasing sequence with $\alpha^*_n<\alpha^*_0$. It is then evident that the limits
\beqn\label{alphanlim}
\lim_{n\rightarrow\infty}\alpha_n=\alpha_\infty
\qquad\text{and}\qquad 
\lim_{n\rightarrow\infty}\alpha^*_n=\alpha^*_\infty
\eeqn
exist. However, since $\cL_n$ is tangent to $\partial\cD$ at $\bc(\alpha_n)$ for large $n$, it must be true that $\kappa_\bc(\alpha_n)\le 0$ for such $n$. Moreover, since a $\partial$-ruling cannot have a tangent endpoint in the interior of an interval on which $\kappa_\bc$ vanishes and since $\kappa_\bc$ can only vanish on a finite number of intervals, it follows that, for large $n$, $\kappa_\bc(\alpha_n)<0$ and that, between $\alpha_n$ and $\alpha_{n+1}$, $\kappa_\bc< 0$. Hence, the vector $\bc(\alpha_n^*)-\bc(\alpha_n)$ must be directed away from the line segment $[\bc(\alpha_\infty),\bc(\alpha_\infty^*)]$ for large $n$. The same observation applies to $\alpha_n^*$. Thus, the vector $\bc(\alpha_n)-\bc(\alpha_n^*)$ is also directed away from $[\bc(\alpha_\infty),\bc(\alpha_\infty^*)]$ for large $n$. However, this is not possible and, consequently, there is a neighborhood of $\bx_0$ that is devoid of any $\partial$-rulings that satisfy \emph{\ref{L3}}. As this is true for every $\bx_0\in\cL_0\cap\cD$, taking the union of all such neighborhoods over all possible such $\bx_0$ yields a neighborhood of $\cL_0$ relative to $\cD$ that does not intersect any other $\partial$-rulings satisfying \emph{\ref{L3}}. 

The above argument also leads to the conclusion that the set of all points in $\cD$ on $\partial$-rulings satisfying \emph{\ref{L3}} is a closed set relative to $\cD$. To see that this is the case, consider a sequence of points $\bx_n\in\cD$ that are on $\partial$-rulings satisfying \emph{\ref{L3}} that converge to some $\bx_0\in\cD$. If $\bx_n$ are all on the same $\partial$-ruling for large enough $n$, then it must be true that $\bx_0$ is also on this $\partial$-ruling, which establishes the claim. If not, then the argument in the previous paragraph can be repeated to reach a contradiction.

\end{proofpart}


\begin{proofpart}

The aim in this part of the proof is to show that the connected components of $\bar\cD\setminus\hat\cD_c$ can be rigidly rotated and translated to fill any gaps present in $\hat\cS_c\cup\bd(\cM)$. Consider, once again, a maximal interval $[\alpha_-,\alpha_+]$ in $\cM\setminus\hat\cA$. To begin, consider the question of whether the set $\bc([\alpha_-,\alpha_+])$ can be rigidly transformed to the set $\bd([\alpha_-,\alpha_+])$. If $\alpha_-=\alpha_+$, then $\bc([\alpha_-,\alpha_+])$ and $\bd([\alpha_-,\alpha_+])$ are both singletons and thus differ by a translation. Next, suppose that $\alpha_-\ne\alpha_+$. Since $[\alpha_-,\alpha_+]$ is disjoint from $\hat\cA$, $\bn$ must satisfy $\bn'=\bzero$ almost everywhere on $[\alpha_-,\alpha_+]$ for the reduced bending energy \eqref{RedBEmod}--\eqref{RedED} to be finite. Since $\bn$ is continuous, this implies that $\bn$ must be constant on $[\alpha_-,\alpha_+]$ and, hence, that the curve parametrized by $\bd$ must be planar on $[\alpha_-,\alpha_+]$.
Moreover, by \eqref{cdncond}, the geodesic curvature and corner angles of the curve parametrized by $\bd$ match those of the curve parametrized by $\bc$. Thus, $\bc([\alpha_-,\alpha_+])$ can be rigidly transformed to $\bd([\alpha_-,\alpha_+])$.

Now consider a connected component $\cD_f$ of $\bar\cD\setminus\hat\cD_c$. The goal is to show that there is a gap in $\hat\cS_c\cup\bd(\cM)$ that has the exact same shape as $\cD_f$. The boundary $\partial\cD_f$ of $\cD_f$ must contain parts of $\partial\cD$ to avoid violating \emph{\ref{M2}}. Let $\cM_f\subseteq\cM$ denote the collection of $\alpha\in\cM$ for which $\bc(\alpha)\in\partial\cD_f$. From the argument presented in the previous paragraph, it is known that $\bc(\cI)$ and $\bd(\cI)$ are related by a rigid transformation for every maximal interval $\cI$ in $\cM_f$. What remains of $\partial\cD_f$ is contained in $\cD$. Each point of $\partial\cD_f$ that does not belong to $\partial\cD$ must lie on a straight line segment with endpoints on $\bc(\cM_f)$. Thus, every such line segment is a $\partial$-ruling of the form $[\bc(\alpha_-),\bc(\mu_-(\alpha_-))]$, where $[\alpha_-,\alpha_+]$ is some maximal interval in $\cM_f$. Moreover, since it has already been established that $\hat\beta_-(\alpha_-)=|\bc(\mu_-(\alpha_-)-\bc(\alpha_-)|=|\bd(\mu_-(\alpha_-)-\bd(\alpha_-)|$, it can be concluded that there is a gap in $\hat\cS_c\cup\bd(\cM)$ with boundary consisting of pieces exactly the same size and shape as the boundary of $\cD_f$. It remains to be show that this gap is planar. 

Towards this end, recall that
\beqn\label{nrelthm}
\bn=\frac{\bd'\times\bg}{|\bd'\times\bg|}\qquad\text{on}\ \hhA.
\eeqn
Differentiating the condition in \emph{\ref{M1}} and forming its vector product with $\bg(\mu(\alpha))$ yields the identity
\begin{align}
\mu'(\alpha)\bd'(\mu(\alpha))\times\bg(\mu(\alpha))&=-(\bd'(\alpha)+\hat\beta(\alpha)\bg'(\alpha))\times\bg(\alpha)
\notag\\[4pt]
&=-(g_\bm(\alpha)-\hat\beta(\alpha)G(\alpha))\bn(\alpha).
\label{obtainingnrel}
\end{align}
By \eqref{obtainingnrel} and Proposition \ref{locinv},
\beqn\label{nonzero}
|\mu'(\alpha)\bd'(\mu(\alpha))\times\bg(\mu(\alpha))|
=|g_\bm(\alpha)-\hat\beta(\alpha)G(\alpha)|\ne0.
\eeqn
On dividing both sides of \eqref{obtainingnrel} by $|g_\bm(\alpha)-\hat\beta(\alpha)G(\alpha)|\ne0$ and using \eqref{nrelthm} and Proposition~\ref{bbetamureg}, it follows that $\bn(\alpha)=\bn(\mu(\alpha))$ for $\alpha\in\hat\cA$. Thus, if $[\alpha_-,\alpha_+]$ is any maximal interval in $\cM_f$, using \eqref{mulimit}, yields
\beqn
\bn(\alpha_-)=\bn(\mu_-(\alpha_-)).
\eeqn
Next, consider any $\alpha,\alpha'\in\cM_f$ and the path along the boundary of $\cD_f$ that connects $\bc(\alpha)$ to $\bc(\alpha')$. There is a corresponding path connecting $\bd(\alpha)$ to $\bd(\alpha')$ that consists of portions of $\bd(\cM_f)$ and of image $\partial$-rulings. Along each of these sets the normal $\bn$ is constant. Moreover, the path in question is connected and, thus, must be contained in a plane. Thus, $\bd(\cM_f)$ is planar. It follows that it is possible to extend the definition of $\bchi$ so that it rigidly transforms $\cD_f$ to fill the gap in $\hat\cS_c\cup\bd(\cM)$ with a gap, necessarily of identical size, which includes $\bd(\cM_f)$ as part of its boundary. This can be done for each connected component in $\bar\cD\setminus\hat\cD_c$, and the resulting function $\bchi$ defined on $\bar\cD$ is continuous by construction.

\end{proofpart}


\begin{proofpart}

The aim of this part of the proof is to show that $\bchi$ is $C^1$ in $\cD$. Let $\bx_0\in\cD$. If $\bx_0$ belongs to $\hat\cD_c$, then $\bchi$ is $C^1$ in a neighborhood of $\bx_0$, as mentioned in \emph{Step}~1. Moreover, if $\bx_0$ belongs to the interior of any of the connected components of $\bar\cD\setminus\hat\cD_c$ mentioned in \emph{Step}~3, then $\bchi$ is also $C^1$ in a neighborhood of $\bx_0$, since those components are rigidly transformed by $\bchi$. Next, suppose that $\bx_0$ does not belong to either of these sets. From \emph{Step}~3, it follows that $\bx_0$ must lie on a $\partial$-ruling, say $\cL_0$, with endpoints on $\partial\cD$. The three mutually exclusive cases \emph{\ref{L1}--\ref{L3}} for $\cL_0$ will be considered separately.

First, suppose that $\cL_0$ satisfies \emph{\ref{L1}}. It follows from \eqref{cdncond2} that $\kappa_n(\alpha_0)\ne0$ and, thus, that $\alpha_0\in\tilde\cA$. Moreover, $\cL_0=[\bc(\alpha_0),\bc(\mu(\alpha_0))]$. Consider an interval $\cI\subseteq\cM$ chosen so $\alpha_0\in\cI$ and $\bn'\ne\bzero$ on $\cI$. It is then possible to define parameterizations $\hat\bx$ and $\hat\br$ as in \eqref{hxthm} and \eqref{hrthm}, respectively, for $(\alpha,\beta)\in\cP(\cI)$. To ensure that the reduced bending energy \eqref{RedBEmod} is finite, almost every point in $\cI$ must also belong to $\hat\cA$. By combining this observation with the continuity of $\hat\bx$, $\hat\br$, and $\bchi$, it becomes evident that $\hat\bx$ and $\hat\br$ determine the function $\bchi$ on $\hat\bx(\cP(\cI))$. The argument in \emph{Step}~1 can now be repeated to show that $\bchi$ is $C^1$ in a neighborhood of $\bx_0$. Additionally, the representation \eqref{gradchi} for the gradient of $\bchi$ also holds for $\alpha\in\tilde\cA$.

Next, suppose that $\cL_0$ satisfies \emph{\ref{L2}}. Then, there is a unit vector $\bff_0$ parallel to $\cL_0$ and a positive number $\beta_0$ such that
\beqn
\bx_0=\bc(\alpha_0)+\beta_0\bff_0.
\eeqn
Let $\bQ$ be defined such that
\beqn\label{bQrep}
\bQ(\alpha)=\bt(\alpha)\otimes\bT(\alpha)+\bm(\alpha)\otimes\bM(\alpha),
\qquad \alpha\in\cM\setminus\cF.
\eeqn
Recalling the final consequence \eqref{gradchi} of \emph{Part}~1, it can be shown that $\nabla\bchi(\bx_0)=\bQ(\alpha_0)$. This will be accomplished by showing that
\beqn\label{defdiff}
\lim_{n\rightarrow\infty}\frac{|\bchi(\bx_n)-\bchi(\bx_0)-\bQ(\alpha_0)(\bx_n-\bx_0)|}{|\bx_n-\bx_0|}=\bzero
\eeqn
for any sequence $\bx_n$ that converges to $\bx_0$. Towards this end, let $\bx_n$ be such a sequence. Each point $\bx_n$ is either on a ruling, a $\partial$-ruling, or belongs to $\cD\setminus\hat\cD_c$. Regardless, the representation
\beqn
\bx_n=\bc(\alpha_n)+\beta_n\bff_n
\eeqn
is valid for some $\alpha_n\in\cM$, $\beta_n>0$, and $|\bff_n|=1$  such that the line segment $[\bc(\alpha_n),\bx_n]$ is part of either a ruling, $\partial$-ruling, or is contained in a connected component of $\bar\cD\setminus\hat{\cD}_c$. Since $\cL_0$ is not tangent to $\partial\cD$ at $\bx_0$, the $\alpha_n$ can be chosen so that 
\beqn
\lim_{n\rightarrow\infty}\alpha_n=\alpha_0,
\eeqn
and clearly $\displaystyle\lim_{n\rightarrow\infty}\beta_n=\beta_0$ and $\displaystyle\lim_{n\rightarrow\infty}\bff_n=\bff_0$. Since $\bchi$ takes referential rulings and $\partial$-rulings to image rulings and image $\partial$-rulings, respectively, and since $\bQ$, as defined in \eqref{bQrep}, maps the direction of referential rulings and $\partial$-rulings to the direction of image rulings and image $\partial$-rulings, it follows that
\begin{align}
\bchi(\bx_n)&=\bd(\alpha_n)+\beta_n\bQ(\alpha_n)\bff_n,\quad n\in\Nat\cup\{0\}.\label{ynrep}
\end{align}
Thus, if $\alpha_n=\alpha_0$ for any $n$, then the ratio $|\bchi(\bx_n)-\bchi(\bx_0)-\bQ(\alpha_0)(\bx_n-\bx_0)|/|\bx_n-\bx_0|$ that appears in \eqref{defdiff} vanishes. It can therefore, henceforth, be assumed that $\alpha_n\ne\alpha_0$. A calculation using \eqref{DFr} and \eqref{DFs} can then be used to show that
\beqn\label{bQprep}
\bQ'=-\bn\otimes\bQ\bn'.
\eeqn
Since $\bn'(\alpha_0)=\bzero$, it follows that
\beqn\label{Qpz}
\bQ(\alpha_n)=\bQ(\alpha_0)+o(\alpha_n-\alpha_0),
\eeqn
where a term of $o(\zeta)$ vanishes faster than $\zeta$ as $\zeta\to0$. On combining \eqref{ynrep} and \eqref{Qpz} and leveraging the understanding that $\bc$ and $\bd$ are differentiable at $\alpha_0$, it follows that
\beqn\label{diffnumlim}
|\bchi(\bx_n)-\bchi(\bx_0)-\bQ(\alpha_0)(\bx_n-\bx_0)|=o(\alpha_n-\alpha_0).
\eeqn
Consider the ratio
\beqn
R_n=\frac{|\alpha_n-\alpha_0|}{|\bc(\alpha_n)-\bc(\alpha_0)+\beta_n\bff_n-\beta_0\bff_0|},
\eeqn
and let $\bff^\perp$ be defined by
\beqn
\bff^\perp_0=\bff_0\times\bN(\alpha_0).
\eeqn
The next objective is to establish an upper bound on $R_n$, independent of $n$. If $\bff_n\ne\bff_0$, then there are numbers $\gamma_n$ and $\gamma_0$ such that
\beqn
\bc(\alpha_0)+\gamma_0\bff_0=\bc(\alpha_n)+\gamma_n\bff_n.
\label{460}
\eeqn
Observe that $\gamma_n\not=0$ for large $n$. Considering the component of \eqref{460} along $\bff^\perp_0$ leads to the relation
\beqn
\bff_n\cdot\bff_0=\frac{\bff_0^\perp\cdot(\bc(\alpha_0)-\bc(\alpha_n))}{\gamma_n}.
\label{461}
\eeqn
Thus, $R_n$ has the upper bound
\begin{align}
R_n&\le \frac{|\alpha_n-\alpha_0|}{|\bff_0^\perp\cdot(\bc(\alpha_n)-\bc(\alpha_0)+\beta_n\bff_n-\beta_0\bff_0)|}\notag\\[4pt]
&=\frac{|\alpha_n-\alpha_0|}{|(1-\beta_n/\gamma_n)\bff_0^\perp\cdot((\bc(\alpha_n)-\bc(\alpha_0))|}.\label{Rnbound}
\end{align}
The bound \eqref{Rnbound} holds if $\bff_n\to\bff_0$ and $\gamma_n\to\infty$. The term on the right-hand side of \eqref{Rnbound} is bounded uniformly in $n$ for sufficiently large $n$ since: (i) $\bc$ is differentiable at $\alpha_0$; (ii) $\bc'(\alpha_0)$ is not parallel to $\bff_0$; and, (iii) $\gamma_n\ne\beta_n$ for large $n$, as otherwise rulings or $\partial$-rulings would cross in $\cD$. It can be seen that $R_n$ is uniformly bounded for those $n$ such that $\bff_n=\bff_0$ as $|\bc'(\alpha_0)=1$. Combining \eqref{diffnumlim} with the uniform bound on $R_n$ yields \eqref{defdiff}. Thus, $\bchi$ is differentiable at $\bx_0$, as claimed. Moreover, since $\nabla\bchi(\bx_0)=\bQ(\alpha_0)$, $\bQ$ is continuous, and \eqref{gradchi} holds, it follows that $\bchi$ is $C^1$ in a neighborhood of $\bx_0$.

Finally, suppose that $\cL_0$ satisfies \emph{\ref{L3}}. Since $\cL_0$ contains a point of $\cD$, it has positive length. Thus, from the argument at the end of \emph{Part}~2, it is isolated from other $\partial$-rulings satisfying \emph{\ref{L3}}. From the arguments appearing in the previous two paragraphs, there exist neighborhoods of $\bx_0$, on either side of $\cL_0$, within which $\bchi$ has $C^1$ regularity. Thus, near $\bchi(\bx_0)$, the surface $\bchi(\cD)$ is formed by two $C^1$ surfaces whose boundaries meet along the line segment corresponding to the image $\partial$-ruling $\bchi(\cL_0)$. These surfaces with common boundary must be oriented so that unit normal field is continuous across the junction $\cL_0$ as the normal field is determined by the normal along the bounding curve parametrized by $\bd$. Hence, the resulting surface is $C^1$ in a neighborhood of $\bchi(\bx_0)$.

Since $\bchi$ is an isometry and maps a $C^1$ surface to a $C^1$ surface, a result due to Myers and Steenrod \cite{MS39} can be used to show that $\bchi$ is $C^1$ in a neighborhood of $\bx_0$. As $\bx_0\in\cD$ can be chosen arbitrarily, it follows that $\bchi$ is $C^1$ on $\cD$ and, thus, that $\bchi(\cD)$ is a $C^1$ immersed surface. 

\end{proofpart}


\begin{proofpart}

The aim of this part of the proof is to show that $\bchi$ has a $C^1$ extension to the closed set $\bar\cD$. From \emph{Steps~1--4}, it is known that for $\bx\in\cD$, $\nabla\bchi(\bx)=\bQ(\alpha)$ (see \eqref{bQrep}), where $[\bc(\alpha),\bx]$ is a line segment that is either part of a ruling, a $\partial$-ruling, or contained in a planar part of $\cD$. Since $\bQ(\alpha)$ is constant on $[\bc(\alpha),\bx]$, the gradient $\nabla\bchi$ extends continuously to $\partial\cD$. From \eqref{bQrep} it can be seen that $\bQ$ is a continuous function away from $\cF$. However, as $\bQ(\alpha)$ is the rotation that takes $\bT(\alpha)$ to $\bt(\alpha)$ and $\bM(\alpha)$ to $\bm(\alpha)$ and, moreover, condition \eqref{cdncond}$_2$ holds, it follows that $\bQ$ is continuous on all of $\cM$. This ensures that the extension of $\nabla\bchi$ to the closed set $\bar\cD$ is continuous. Thus, $\bchi$ is $C^1$ on $\bar\cD$.
\end{proofpart}


\begin{proofpart}
This final part of the proof is dedicated to showing that $\bchi$ is $C^2$ almost everywhere. Begin by considering the unit normal $\hat\bnu$ on $\cS$ pulled back to $\cD$ using $\bchi$, meaning that for $\bx\in\cD$, $\hat\bnu(\bx)$ gives the normal to $\cS$ at $\bchi(\bx)$.\footnote{See the discussion in the paragraph before the start of Subsection~\ref{sectpar}.} It will be shown that on an open subset $\cD_s$ of $\cD$ that differs from $\cD$ by a set of zero areal measure, $\hat\bnu$ is $C^1$. It then follows that $\bchi(\cD_s)$ is locally a $C^2$ surface. As at the end of \emph{Part}~4, the result of Myers and Steenrod \cite{MS39} can then be applied to show that the restriction of $\bchi$ to $\cD_s$ is $C^2$.

From the previous parts of the proof it is known that $\cD$ consists of three types of subsets: the part $\hat\cD_c$ of $\cD$ that is described by the parameterization $\hat\bx$ defined in \eqref{hrthm}, the part of $\cD$ that consists of $\partial$-rulings, and the remaining part of $\cD$, which consists of open pieces that remain planar under $\bchi$. On $\hat\cD_c$, we have $\hat\bnu\circ\hat\bx=\bnu$, where $\bnu$ is defined in \eqref{bnu}. Upon taking the gradient of this equation and evaluating it at $(\alpha,\beta)\in\cP(\hat\cA)$, it follows that
\beqn
\nabla \hat\bnu(\hat\bx(\alpha,\beta))=\nabla\bnu(\alpha,\beta)(\nabla\hat\bx(\alpha,\beta))^{-1}.
\eeqn
Using \eqref{hxthm}, a calculation shows that for any vector $\bw$ tangent to $\cD$,
\beqn
(\nabla\hat\bx(\alpha,\beta))^{-1}\bw=\Big (\frac{\bff^\perp(\alpha)\cdot\bw}{g_\bm(\alpha)-\beta G(\alpha)},\bff(\alpha)\cdot\bw-\frac{\bff^\perp(\alpha)\cdot\bw}{g_\bm(\alpha)-\beta G(\alpha)}\Big),
\eeqn
where $\bff^\perp=g_\bm\bT-g_\bt\bM$ is orthogonal to $\bff$ and tangent to $\cD$. Thus,
\beqn\label{gradhbnu}
\nabla\hat\bnu(\hat\bx(\alpha,\beta))=\frac{\bn'(\alpha)\otimes\bff^\perp(\alpha)}{g_\bm(\alpha)-\beta G(\alpha)}.
\eeqn
It follows from \eqref{gradhbnu} that $\hat\bnu$ is $C^1$ on $\hat\cD_c$. Moreover, $\hat\bnu$ is also $C^1$ on the open parts of $\cD$ that remain planar under $\bchi$ as $\hat\bnu$ is locally constant on this set.

Suppose now that $\bx_0\in\cD$ lies on a $\partial$-ruling $\cL_0$. The $\partial$-ruling $\cL_0$ must then belong to one of the three categories \emph{\ref{L1}}--\emph{\ref{L3}} delineated in the penultimate paragraph of \emph{Part}~4. If $\cL_0$ satisfies \emph{\ref{L1}}, then $\bx_0$ is covered by a parameterization $\hat\bx$ of the form \eqref{hxthm} with $\hat\cA$ replaced by $\tilde\cA$. Thus, for the same reason as in the previous paragraph, the gradient of $\hat\bnu$ at $\bx$ also satisfies \eqref{gradhbnu} for some $(\alpha,\beta)\in \cP(\tilde\cA)$. Next, suppose that $\cL_0$ satisfies \emph{\ref{L2}}. Using an argument similar to that appearing in the third paragraph of \emph{Part}~4, it can be shown that $\hat\bnu$ is differentiable at $\bx_0$ and its gradient at $\bx_0$ is zero. Finally, suppose that the $\cL_0$ satisfies \emph{\ref{L3}}. It was argued at the end of \emph{Part}~2 that such rulings are isolated. Thus, there can be at most a countable number of such $\partial$-rulings. It follows that the set of points in $\cD$ that are covered by rulings of this type form a set of zero areal measure which, as argued at the end of \emph{Part}~2, must be closed.

It has been shown that $\nabla\hat\bnu$ exists away from $\partial$-rulings satisfying \emph{\ref{L3}}. Next it is shown that this function is continuous. In particular, it remains to be shown that $\nabla\hat\bnu$ is continuous on $\partial$-rulings satisfying \emph{\ref{L2}} as on those satisfying \emph{\ref{L1}} the formula \eqref{gradhbnu} holds. Let $\bx_0\in\cD$ be a point on $\partial$-ruling $\cL_0$ satisfying \emph{\ref{L2}}. Let $\bx_n\in\cD$ be a sequence of points that converge to $\bx_0$. It must be shown that $\nabla\hat\bnu(\bx_n)$ converges to $\bzero$ as $n$ goes to infinity as $\nabla\hat\bnu(\bx_0)=\bzero$. 
It suffices to consider $\bx_n$ of the form
\beqn
\bx_n=\hat\bx(\alpha_n,\beta_n)=\bc(\alpha_n)+\beta_n\bff(\alpha_n),\qquad (\alpha_n,\beta_n)\in\cP(\tilde\cA).
\eeqn
Since $\cL_0$ is a $\partial$-ruling, there is a maximal interval $[\alpha_-,\alpha_+]\subseteq\cM\setminus\hat\cA$ such that $\cL_0=[\bc(\alpha_-),\bc(\mu_-(\alpha_-))]$. As $\cL_0$ satisfies \emph{\ref{L2}}, it can be assumed that $\bn'(\alpha_-)=\bzero$. Given $\beta_0\in(0,\hat\beta_-(\alpha_-))$ such that
\beqn
\bx_0=\bc(\alpha_-)+\beta_0\bff_-(\alpha_-).
\eeqn
Since $\lim_{n\rightarrow\infty}\bx_n=\bx_0$, it follows that
\beqn\label{limab}
\lim_{n\rightarrow\infty}\alpha_n=\alpha_0\qquad\text{and}\qquad \lim_{n\rightarrow\infty}\beta_n=\beta_0.
\eeqn
Utilizing Proposition~\ref{locinv}, it follows that
\beqn
\inf_{n\in\Nat} (g_m(\alpha_n)-\hat\beta(\alpha_n)G(\alpha_n))\geq 0.
\eeqn
As $\beta_0<\hat\beta_-(\alpha_-)$, the previous inequality and \eqref{limab}$_2$ yield
\beqn
\inf_{n\in\Nat} (g_m(\alpha_n)-\beta_nG(\alpha_n))=:I>0.
\eeqn
Thus, from \eqref{gradhbnu},
\beqn
\lim_{n\rightarrow\infty} |\nabla\hat\bnu(\alpha_n)|\leq\lim_{n\rightarrow\infty}\frac{|\bn'(\alpha_n)|}{g_m(\alpha_n)-\beta_nG(\alpha_n)}\leq \lim_{n\rightarrow\infty}\frac{|\bn'(\alpha_n)|}{I}=\lim_{n\rightarrow\infty}\frac{|\bn'(\alpha_-)|}{I}=0,
\eeqn
as desired and, so, $\nabla\hat\bnu$ is continuous at $\bx_0$.
\end{proofpart}
\end{proof}

\begin{remark}\label{regremark}
The assumptions placed on an admissible framed curve $(\bd,\bn)$ are only strong enough to generate a $C^1$ isometric immersion that is $C^2$ almost everywhere, rather than $C^2$ on all of $\bar\cD$. To see why this is so, consider a choice of $\cD$ that consists of two squares of different sizes, with one of the sides of the smaller square attached to a side of the larger one. Suppose, also, that the two squares are arranged so that $\partial\cD$ has eight corners. Let $\cL$ denote the line segment in $\cD$ along which the two squares meet. Define an isometry $\bchi$ on $\cD$ by sending one square to a cylinder of radius $R_1$ and the other square to a cylinder of radius $R_2$ in such a way that the referential rulings generated are all parallel to $\cL$. This can be done so that the resulting surface $\cS$ and $\bchi$ are $C^1$. Moreover, it is possible to can check that the framed curve associated with the boundary of $\cS$ is admissible. However, if $R_1\not=R_2$, then $\cS$ will not be $C^2$ on the image of $\cL$, as the mean curvature is not continuous there. Consequently, $\bchi$ is not $C^2$ on $\cL$.

From the proof of Theorem~\ref{fSconstP} it is nevertheless evident that the constructed isometric immersion can only fail to be $C^2$ on a ruling satisfying \emph{\ref{L3}} and that there can be at most a countable number of such rulings. It thus follows that if $\cD$ is convex and that $\partial\cD$ has no corners, the constructed isometry would be $C^2$ on $\cD$, as no such rulings can exist in this case.
\end{remark}

We conlude this section with two results that lead together to the conclusion that condition \emph{\ref{M2}} holds if and only if a single inequality is satisfied.

\begin{proposition}
Condition \emph{\ref{M2}} holds if and only if, for each combination of $(\alpha,\alpha')\in\tilde\cA\times\tilde\cA$, one of the following conditions hold:
\begin{enumerate}[label=\textbf{\textit{M2.\arabic*}}, leftmargin=1em, align=left]
\item \label{M2.1} $\bff(\alpha)\times\bff(\alpha')=\bzero$,

\item \label{M2.2} $\displaystyle\frac{\bff'(\alpha')\cdot(\bc(\alpha')-\bc(\alpha))}{(\bff(\alpha')\times\bN)\cdot\bff(\alpha)}\ge \hat\beta(\alpha),$

\item \label{M2.3} $\displaystyle\frac{\bff'(\alpha')\cdot(\bc(\alpha')-\bc(\alpha))}{(\bff(\alpha')\times\bN)\cdot\bff(\alpha)}\le 0.$
\end{enumerate}

\end{proposition}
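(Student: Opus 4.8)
The plan is to reduce \emph{\ref{M2}} to the purely planar statement that no two of the closed segments $R_\alpha:=[\bc(\alpha),\bc(\mu(\alpha))]$, $\alpha\in\tilde\cA$, meet at a point interior to both of them, and then to read off the three alternatives \emph{\ref{M2.1}}--\emph{\ref{M2.3}} as an exhaustive description, for a fixed ordered pair $(\alpha,\alpha')$, of where the two lines carrying $R_\alpha$ and $R_{\alpha'}$ intersect relative to $R_\alpha$. Throughout I would use that $\bff(\alpha)$ points into $\cD$ at $\bc(\alpha)$ and that, by \eqref{bbetarep4} and \eqref{mudef4}, $\bc(\mu(\alpha))$ is the first point at which the ray from $\bc(\alpha)$ in the direction $\bff(\alpha)$ meets $\partial\cD$; in particular the relative interior of each $R_\alpha$ lies in $\cD$.

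The algebraic step is the following. Fix $(\alpha,\alpha')\in\tilde\cA\times\tilde\cA$ and let $\ell_\alpha$, $\ell_{\alpha'}$ be the lines through $\bc(\alpha)$, $\bc(\alpha')$ with directions $\bff(\alpha)$, $\bff(\alpha')$. By Proposition~\ref{idcond} and \eqref{f'repp}, $\bff'(\alpha')$ is a scalar multiple of the unit vector $\bff(\alpha')\times\bN$, which is tangent to $\cD$ and orthogonal to $\bff(\alpha')$; resolving the identity $\bc(\alpha)+\beta\bff(\alpha)=\bc(\alpha')+\beta'\bff(\alpha')$ along $\bff(\alpha')\times\bN$ then shows that $\ell_\alpha$ and $\ell_{\alpha'}$ are parallel exactly when $(\bff(\alpha')\times\bN)\cdot\bff(\alpha)=0$, that is, exactly when $\bff(\alpha)\times\bff(\alpha')=\bzero$, and that otherwise they meet in the single point $\bc(\alpha)+\beta_\ast\bff(\alpha)$, where $\beta_\ast$ is the common quotient appearing in \emph{\ref{M2.2}} and \emph{\ref{M2.3}}. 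Hence, for the ordered pair $(\alpha,\alpha')$, the disjunction \emph{\ref{M2.1}}$\,\vee\,$\emph{\ref{M2.2}}$\,\vee\,$\emph{\ref{M2.3}} holds if and only if $\ell_\alpha\parallel\ell_{\alpha'}$ or the point $\ell_\alpha\cap\ell_{\alpha'}$ does not lie in the open segment $(\bc(\alpha),\bc(\mu(\alpha)))$.

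For ``$\Leftarrow$'' I would assume every ordered pair satisfies one of \emph{\ref{M2.1}}--\emph{\ref{M2.3}} and take $\alpha_1,\alpha_2\in\tilde\cA$. If $\ell_{\alpha_1}\parallel\ell_{\alpha_2}$, the segments lie on parallel lines, and since the relative interior of each lies in $\cD$ while the endpoints of the other lie on $\partial\cD$, no boundary point of one can lie strictly between the endpoints of the other; hence $R_{\alpha_1}$ and $R_{\alpha_2}$ coincide, share an endpoint, or are disjoint. If the lines are not parallel, then $R_{\alpha_1}\cap R_{\alpha_2}\subseteq\{\bp\}$ with $\bp:=\ell_{\alpha_1}\cap\ell_{\alpha_2}$; applying the hypothesis to $(\alpha_1,\alpha_2)$ gives $\bp\notin(\bc(\alpha_1),\bc(\mu(\alpha_1)))$ and to $(\alpha_2,\alpha_1)$ gives $\bp\notin(\bc(\alpha_2),\bc(\mu(\alpha_2)))$, so either $\bp\notin R_{\alpha_1}\cap R_{\alpha_2}$ (disjoint) or $\bp$ is an endpoint of both (they share an endpoint); in either case \emph{\ref{M2}} holds. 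For ``$\Rightarrow$'' I would assume \emph{\ref{M2}} and fix $(\alpha,\alpha')$; if the lines are parallel, \emph{\ref{M2.1}} holds, so suppose not and put $\bp=\ell_\alpha\cap\ell_{\alpha'}$. If $\bp\in(\bc(\alpha),\bc(\mu(\alpha)))$, then $\bp\in\cD$; were $\bp$ also in $(\bc(\alpha'),\bc(\mu(\alpha')))$, the segments $R_\alpha$, $R_{\alpha'}$ would cross at a point interior to both, contradicting \emph{\ref{M2}}. The residual possibility --- that $\bp$ lies in $\cD\cap\ell_{\alpha'}$ but beyond an endpoint of $R_{\alpha'}$, because $\ell_{\alpha'}$ may leave $\cD$ at $\bc(\mu(\alpha'))$ and re-enter --- I would exclude by invoking openness of $\tilde\cA$ and continuity of $\bff$ on $\tilde\cA$ to produce, near $\alpha$, an interval of parameters whose rulings pass through points of $\cD$ near $\bp$ in directions bounded away from that of $\ell_{\alpha'}$, forcing two of them to cross transversally in $\cD$ and again contradicting \emph{\ref{M2}} (the argument parallels the one used for Proposition~\ref{M2strong}, with \eqref{aecond} available as an alternative source of contradiction). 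Thus $\bp\notin(\bc(\alpha),\bc(\mu(\alpha)))$, so one of \emph{\ref{M2.2}}, \emph{\ref{M2.3}} holds.

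The main obstacle is precisely this last step of ``$\Rightarrow$'': the identification of $\beta_\ast$ and the parallel case are routine, and ``$\Leftarrow$'' is a short exercise in planar geometry once one remembers to feed \emph{both} orderings of the pair into the hypothesis, but excluding the configuration in which $\ell_\alpha\cap\ell_{\alpha'}$ lies in $\cD$ off $R_{\alpha'}$ genuinely requires combining the non-crossing hypothesis with the continuity of the ruling field, and is the part I would expect to demand the most care.
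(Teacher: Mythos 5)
Your overall route is the same as the paper's: read the common quotient in \emph{\ref{M2.2}}--\emph{\ref{M2.3}} as the parameter $\beta_\ast$ of the point $\ell_\alpha\cap\ell_{\alpha'}$ measured along $\ell_\alpha$ from $\bc(\alpha)$, so that the disjunction says ``the carrying lines are parallel, or their intersection avoids the open segment $(\bc(\alpha),\bc(\mu(\alpha)))$,'' and then translate back and forth to \emph{\ref{M2}}. Your ``$\Leftarrow$'' is correct; the paper in fact gets by with one ordering of each pair, since a common point of the two segments that is an endpoint of $[\bc(\alpha),\bc(\mu(\alpha))]$ lies on $\partial\cD$ and is therefore automatically an endpoint of $[\bc(\alpha'),\bc(\mu(\alpha'))]$, whose relative interior lies in $\cD$; using both orderings as you do is equally fine. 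One caveat you share with the paper: by \eqref{f'repp} one has $\bff'(\alpha')=-G(\alpha')\,(\bff(\alpha')\times\bN)$, so the quotient as printed equals $-G(\alpha')\beta_\ast$ rather than $\beta_\ast$; identifying it with the intersection parameter tacitly replaces the numerator by $(\bff(\alpha')\times\bN)\cdot(\bc(\alpha')-\bc(\alpha))$, which is evidently what is intended.

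The genuine gap sits exactly where you predicted, but your proposed repair does not close it. In the residual configuration --- $\bp=\ell_\alpha\cap\ell_{\alpha'}$ lying in the relative interior of $[\bc(\alpha),\bc(\mu(\alpha))]$ (hence in $\cD$) but on $\ell_{\alpha'}$ beyond an endpoint of $[\bc(\alpha'),\bc(\mu(\alpha'))]$ because $\ell_{\alpha'}$ leaves $\cD$ and re-enters --- the rulings you produce for parameters near $\alpha$ are nearly parallel to the ruling through $\alpha$ and so do not cross one another; what they cross near $\bp$ is the \emph{line} $\ell_{\alpha'}$, not the \emph{segment} $[\bc(\alpha'),\bc(\mu(\alpha'))]$, and \emph{\ref{M2}} constrains only segments. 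No contradiction is produced. Appealing to \eqref{aecond} does not help either, since that inequality comes from the finite-energy clause of admissibility, whereas the proposition asserts an equivalence with \emph{\ref{M2}} alone. For a non-convex $\cD$ (which the paper permits) one can place two disjoint, non-parallel rulings in different ``arms'' of $\cD$ whose carrying lines meet inside one of them, so this case is not vacuous and cannot be dismissed by a local perturbation argument. To your credit, you have isolated the one nontrivial point: the paper's own proof simply asserts ``by \emph{\ref{M2}} either \emph{\ref{M2.2}} or \emph{\ref{M2.3}} must hold'' without confronting this configuration at all. But the argument you sketch for it would fail; closing it requires either an additional hypothesis (e.g., convexity of $\cD$, or admissibility of the framed curve together with a genuinely different argument) or a reformulation of \emph{\ref{M2.2}}--\emph{\ref{M2.3}}.
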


\begin{proof}
Assume that \emph{\ref{M2}} holds and fix $\alpha\in\tilde\cA$ and $\alpha'\in\tilde\cA$. The line through $\bc(\alpha)$ with direction $\bff(\alpha)$ is is either parallel or not parallel to the line through $\bc(\alpha')$ with direction $\bff(\alpha')$. If these lines are parallel, then \emph{\ref{M2.1}} holds. If these lines are not parallel, then they intersect and, hence, there exist $\beta\in\Real$ and $\beta'\in\Real$ such that
\beqn
\bc(\alpha)+\beta\bff(\alpha)=\bc(\alpha')+\beta'\bff(\alpha').
\label{464}
\eeqn
From the component of \eqref{464} in the direction of $\bff(\alpha')\times\bN$, which is orthogonal to $\bff(\alpha')$, it follows that
\beqn
\beta=\frac{\bff'(\alpha')\cdot(\bc(\alpha')-\bc(\alpha))}{(\bff(\alpha')\times\bN)\cdot\bff(\alpha)}
\eeqn
and, thus, by \emph{\ref{M2}} that either \emph{\ref{M2.2}} or \emph{\ref{M2.3}} must hold.

Now fix $\alpha\in\tilde\cA$ and $\alpha'\in\tilde\cA$ and assume that one of \emph{\ref{M2.1}}, \emph{\ref{M2.2}}, and \emph{\ref{M2.3}} holds. It is then necessary to show that $\cL=[\bc(\alpha), \bc(\mu(\alpha))]$ and $\cL'=[\bc(\alpha'), \bc(\mu(\alpha'))]$ are the same, meet at an endpoint, or are disjoint.

If \emph{\ref{M2.1}} holds, then $\cL$ and $\cL'$ are parallel. If \emph{\ref{M2}} does not hold, then $\cL$ and $\cL'$ must be different but overlap on an interval of nonzero length. From the definition \ref{mudef} of $\mu$, this is not possible; hence, \eqref{M2} must hold. Next, assume that \ref{M2.2} holds. Notice that this condition is viable only if $\bff(\alpha)$ and $\bff(\alpha')$ are not parallel and, hence, only if $\cL$ and $\cL'$ are not parallel, meaning that the lines containing these rulings must cross. It follows that there are $\beta\in\Real$ and $\beta'\in\Real$ such that 
\beqn
\bc(\alpha)+\beta\bff(\alpha)=\bc(\alpha')+\beta'\bff(\alpha').
\label{467}
\eeqn
From the component of \eqref{467} in the direction of $\bff(\alpha')\times\bN$, it follows that
\beqn
\beta=\frac{\bff'(\alpha')\cdot(\bc(\alpha')-\bc(\alpha))}{(\bff(\alpha')\times\bN)\cdot\bff(\alpha)},
\eeqn
from which it follows that $\beta\ge\hat\beta(\alpha)$ and, thus, that the lines  containing $\cL$ and $\cL'$ must cross outside of $\cD$ or meet on $\partial\cD$. Hence, $\cL$ and $\cL'$ are either disjoint or meet on $\partial\cD$. The proof involving \emph{\ref{M2.3}} is similar and, thus, is omitted.
\end{proof}

The conditions \emph{\ref{M2.1}}, \emph{\ref{M2.2}}, and \emph{\ref{M2.3}} can be succinctly represented by a single inequality.

\begin{proposition}
Given $\alpha\in\tilde\cA$ and $\alpha'\in\tilde\cA$, let $M_i(\alpha,\alpha')$, $i=1,2,3$, be defined by 
\begin{align}
M_1(\alpha,\alpha')&=|\bff(\alpha)\times\bff(\alpha')|^2,
\\[4pt]
M_2(\alpha,\alpha')&=\bff'(\alpha')\cdot(\bc(\alpha')-\bc(\alpha))- \hat\beta(\alpha)(\bff(\alpha')\times\bN)\cdot\bff(\alpha),
\\[6pt]
M_3(\alpha,\alpha')&=\bff'(\alpha')\cdot(\bc(\alpha')-\bc(\alpha)).
\end{align}
Then, one of \emph{\ref{M2.1}}--\emph{\ref{M2.3}} holds if and only if
\beqn\label{M2simp}
M_1(\alpha,\alpha')M_2(\alpha,\alpha')M_3(\alpha,\alpha')\ge 0
\eeqn
for each $\alpha\in\tilde\cA$ and $\alpha'\in\tilde\cA$.
\end{proposition}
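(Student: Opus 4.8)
The plan is to collapse the three alternatives into a statement about the single scalar
$D(\alpha,\alpha'):=(\bff(\alpha')\times\bN)\cdot\bff(\alpha)$, the denominator common to \emph{\ref{M2.2}} and \emph{\ref{M2.3}}. Writing also $N(\alpha,\alpha'):=\bff'(\alpha')\cdot(\bc(\alpha')-\bc(\alpha))=M_3(\alpha,\alpha')$, we have $M_2(\alpha,\alpha')=N(\alpha,\alpha')-\hat\beta(\alpha)D(\alpha,\alpha')$, so all three of $M_1,M_2,M_3$ are expressed through $D$, $N$, and $\hat\beta(\alpha)$.

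The first step — and the only one requiring any care — is the identity $M_1(\alpha,\alpha')=D(\alpha,\alpha')^2$. By \eqref{deffp} together with $\bT\cdot\bN=\bM\cdot\bN=0$, both $\bff(\alpha)$ and $\bff(\alpha')$ are unit vectors lying in the plane orthogonal to the constant normal $\bN$, so $\bff(\alpha)\times\bff(\alpha')$ is always a scalar multiple of $\bN$. Writing $\bff(\alpha')=\cos\psi\,\bff(\alpha)+\sin\psi\,(\bN\times\bff(\alpha))$ for a suitable angle $\psi$ and using the triple-product identity $\bu\times(\bv\times\bw)=\bv(\bu\cdot\bw)-\bw(\bu\cdot\bv)$, one finds $\bff(\alpha)\times\bff(\alpha')=(\sin\psi)\bN$ and $D(\alpha,\alpha')=\sin\psi$, whence $M_1=|\bff(\alpha)\times\bff(\alpha')|^2=\sin^2\psi=D^2$. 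In particular $M_1(\alpha,\alpha')=0$ exactly when $D(\alpha,\alpha')=0$, and, since $\bff(\alpha)\times\bff(\alpha')$ is parallel to $\bN$, this is precisely condition \emph{\ref{M2.1}}.

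I would then finish with a two-case split. If $D(\alpha,\alpha')=0$, then \emph{\ref{M2.1}} holds, so the left side of the claimed equivalence is true, and $M_1(\alpha,\alpha')=0$ forces $M_1M_2M_3=0\ge 0$, so the right side is true as well. If $D(\alpha,\alpha')\ne 0$, set $t=N(\alpha,\alpha')/D(\alpha,\alpha')$, which is the common value of the left-hand sides of \emph{\ref{M2.2}} and \emph{\ref{M2.3}}. Substituting $M_1=D^2$, $M_3=N=tD$, and $M_2=N-\hat\beta(\alpha)D$ gives
\[
M_1(\alpha,\alpha')M_2(\alpha,\alpha')M_3(\alpha,\alpha')=D^2\,N\,(N-\hat\beta(\alpha)D)=D^4\,t\,(t-\hat\beta(\alpha)).
\]
Since $D^4>0$ and $\hat\beta(\alpha)>0$, the inequality $M_1M_2M_3\ge 0$ is equivalent to $t(t-\hat\beta(\alpha))\ge 0$, i.e.\ to $t\le 0$ or $t\ge\hat\beta(\alpha)$ — that is, to \emph{\ref{M2.3}} or \emph{\ref{M2.2}}. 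Combining the two cases yields the stated equivalence for each $(\alpha,\alpha')\in\tilde\cA\times\tilde\cA$. I do not anticipate any genuine obstacle: once the identity $M_1=D^2$ (and the observation that $\bff(\alpha)\times\bff(\alpha')$ always points along $\bN$) is in hand, the remainder is elementary algebra using the positivity of $D^4$ and of $\hat\beta(\alpha)$.
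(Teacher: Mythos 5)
Your proof is correct and follows essentially the same route as the paper: both hinge on the observation that $(\bff(\alpha')\times\bN)\cdot\bff(\alpha)=\pm|\bff(\alpha)\times\bff(\alpha')|$ (your identity $M_1=D^2$) and then split on whether this quantity vanishes. Your factorization $M_1M_2M_3=D^4\,t\,(t-\hat\beta(\alpha))$ is a slightly cleaner packaging that disposes of the cases $D>0$ and $D<0$ simultaneously, whereas the paper treats the two signs separately.
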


\begin{proof}
Fix $\alpha\in\tilde\cA$ and $\alpha'\in\tilde\cA$. The proof is split into cases depending on whether $(\bff(\alpha')\times\bN)\cdot\bff(\alpha)=0$, $(\bff(\alpha')\times\bN)\cdot\bff(\alpha)>0$, or $(\bff(\alpha')\times\bN)\cdot\bff(\alpha)<0$. First, if $(\bff(\alpha')\times\bN)\cdot\bff(\alpha)=0$, then, since $(\bff(\alpha')\times\bN)\cdot\bff(\alpha)=\pm|\bff(\alpha')\times\bff(\alpha)|$, \emph{\ref{M2.1}} and \eqref{M2simp} hold.

Next, for $(\bff(\alpha')\times\bN)\cdot\bff(\alpha)>0$, suppose that one of \emph{\ref{M2.2}}--\emph{\ref{M2.3}} hold. If, in particular, \emph{\ref{M2.2}} holds, then $M_2(\alpha,\alpha')\ge 0$ and $M_3(\alpha,\alpha')>0$, whereby \eqref{M2simp} holds. If, alternatively, \emph{\ref{M2.3}} holds, then $M_3(\alpha,\alpha') \le 0$ and $M_2(\alpha,\alpha')<0$, whereby \eqref{M2simp} still holds. Suppose, conversely, that \eqref{M2simp} holds. Since $M_1(\alpha,\alpha')>0$, it follows from \eqref{M2simp} that
\beqn
M_2(\alpha,\alpha')M_3(\alpha,\alpha')\ge 0.
\label{472}
\eeqn
Referring to \eqref{472}, $M_2(\alpha,\alpha')$ and $M_3(\alpha,\alpha')$ are either both positive or both negative. If they are both positive, then \emph{\ref{M2.2}} holds; if they are both negative, then \emph{\ref{M2.3}} holds.

The proof for the case $(\bff(\alpha')\times\bN)\cdot\bff(\alpha)<0$ is completely analogous to that for $(\bff(\alpha')\times\bN)\cdot\bff(\alpha)>0$ and is therefore omitted.
\end{proof}

\section{Summary and discussion}
\label{sectcon}

We studied the class of $C^2$ isometric immersions defined on the closure $\bar\cD$ of a simply connected, bounded, planar region $\cD$ whose boundary may include finitely many corners and intervals of vanishing curvature. The image surface $\cS=\bchi(\cD)$ associated with such an immersion $\bchi$ admits a natural decomposition into curved and planar regions, with the curved regions described by ruled surfaces and the planar regions arising in zones where the mean curvature vanishes. We associated to each immersion a framed curve $(\bd,\bn)$ defined along the boundary $\partial\cS$, where $\bd$ is a parametrization of $\partial\cS$ and $\bn$ is a unit normal to $\cS$ restricted to this curve. We found that the framed curve must satisfy regularity conditions, be compatible with $\partial\cD$, and be associated with rulings that satisfy specific geometric constraints. Starting from any framed curve with finite total bending energy $E$ satisfying these conditions, we constructed a $C^1$ isometric immersion $\bar\cD$ that is $C^2$ almost everywhere on $\cD$.

The requirement that the bending energy $E$ be finite is essential to the construction of the isometric immersion. Specifically, if $E$ is finite, then $\bn'$ vanishes precisely on open intervals along the boundary $\partial\cS$ that boarder flat regions of the surface $\cS=\bchi(\cD)$ and is nonzero only on those associated with curved, ruled portions of $\cS$. This relationship is used in defining the set $\hat\cA$ and permits a locally invertible ruled parameterization to be established on the curved regions where the mean curvature of $\cS$ is nonzero. The role of the finite energy condition in this analysis is consistent with both the physical modeling of unstretchable elastic surfaces and established variational frameworks involving curvature-dependent energy densities.

Although the immersions obtained by our construction do not exhibit global $C^2$ regularity, they are globally $C^1$, including the boundary, and $C^2$ almost everywhere. This outcome is consistent with expectations based on known examples, including Sadowsky's piecewise smooth construction of a developable M\"obius band. The restriction to $C^2$ immersions in our analysis is not overly restrictive due to the density of smooth maps in the Sobolev space $W^{2,2}$, the natural setting for isometric immersions with finite bending energy. We recognize the potential for extending the present framework to that setting, where it might be useful in characterizing such immersions in terms of boundary-framed data.

A distinguishing feature of our formulation is the representation of isometric immersions in terms of framed curves along the boundary of the image surface. The resulting framework provides a clear and structured approach for studying the geometry and energetics of developable surfaces under boundary constraints. Beyond its implications for differential geometry and variational theory, the framework developed here may also inform applications in the mechanics of thin structures, where accurate control of regularity and curvature is essential for predictive modeling.

\section*{Acknowledgements}

E.F. gratefully acknowledges support from the Okinawa Institute of Science and Technology Graduate University with subsidy funding from the Cabinet Office, Government of Japan.

%
%
%

\bibliographystyle{acm}
\bibliography{isometricdef} 

\end{document}